\newtheorem*{TheoremA'}{Theorem A'}
\newtheorem{thmm}{Theorem}
\newtheorem*{TheoremD'}{Theorem D'}
\newtheorem*{TheoremE'}{Theorem E'}
\newtheorem{theorem}{Theorem}[section]
\newtheorem{corollary}[theorem]{Corollary}
\newtheorem*{main*}{Main Theorem}
\newtheorem{lemma}[theorem]{Lemma}
\newtheorem{proposition}[theorem]{Proposition}
\newtheorem{question}[theorem]{Question}
\theoremstyle{definition}
\newtheorem{definition}[theorem]{Definition}
\newtheorem{remark}[theorem]{Remark}
\newcommand{\F}{\mathcal{F}}
\def\pX{{\partial X}}
\def\RR{{\mathbb R}}
\def\NN{{\mathbb N}}
\def\inj{{\text{inj}}}
\def\lx{{\underline x}}
\def\ly{{\underline y}}
\def\lv{{\underline v}}
\def\lc{{\underline c}}
\def\lV{{\underline V}}
\def\lN{{\underline N}}
\def\lm{{\underline m}}
\def\pr{{\text{pr}}}
\def\diam{\mathop{\hbox{{\rm diam}}}}
\def\diam{\mathop{\hbox{{\rm diam}}}}
\def\pr{\mathop{\hbox{{\rm pr}}}}
\def\top{{\mathop{\hbox{\footnotesize \rm top}}}}
\def\Reg{\mathop{\hbox{{\rm Reg}}}}
\def\Sing{\mathop{\hbox{{\rm Sing}}}}
\def\Rec{\mathop{\hbox{{\rm Rec}}}}
\def\a{\alpha}
\def\b{\beta}
\def\c{\gamma}   \def\C{\Gamma}
\def\d{\delta}   %\def\C{\Gamma}
 \def\e{\epsilon}
\def\ae{\text{-a.e.}\ }
\def\bP{\textbf{P}}
\def\bF{\textbf{F}}
\title[Running heading with forty characters or less]
      {Volume asymptotics and Margulis function in nonpositive curvature}
\author[first-name1 last-name1 and first-name2 last-name2]{Weisheng Wu}
\subjclass{}
 \keywords{}
\address{School of Mathematical Sciences, Xiamen University, Xiamen, 361005, P.R. China}
\email{wuweisheng@xmu.edu.cn}
\begin{document}

\maketitle
\markboth{Volume asymptotics and Margulis function}
{W. Wu}
\renewcommand{\sectionmark}[1]{}

\begin{abstract}
In this article, we consider a closed rank one $C^\infty$ Riemannian manifold $M$ of nonpositive curvature and its universal cover $X$. Let $b_t(x)$ be the Riemannian volume of the ball of radius $t>0$ around $x\in X$,  and $h$ the topological entropy of the geodesic flow. We obtain the following Margulis-type asymptotic estimates
\[\lim_{t\to \infty}b_t(x)/\frac{e^{ht}}{h}=c(x)\]
for some continuous function $c: X\to \RR$. We prove that the Margulis function $c(x)$ is in fact $C^1$. If $M$ is a surface of nonpositive curvature without flat strips, we show that $c(x)$ is constant if and only if $M$ has constant negative curvature. We also obtain a rigidity result related to the flip invariance of the Patterson-Sullivan measure.
\end{abstract}

\tableofcontents

\section{Introduction}
Consider a closed $C^\infty$ Riemannian manifold $(M,g)$ with negative sectional curvature everywhere. It is well known that the geodesic flow defined on the unit tangent bundle $SM$ is an Anosov flow (cf. \cite{An}, \cite[Section 17.6]{KH}). The ergodic theory of Anosov flows has many striking applications in the study of asymptotic geometry of the universal cover $X$ of $M$. In his celebrated 1970 thesis \cite{Mar1, Mar2}, Margulis obtained the following result:
\begin{equation}\label{e:mar}
\lim_{t\to \infty}b_t(x)/\frac{e^{ht}}{h}=c(x),
\end{equation}
where $b_t(x)$ is the Riemannian volume of the ball of radius $t>0$ around $x\in X$, $h$ the topological entropy of the geodesic flow, and $c: X\to \RR$ is a continuous function, which is called \emph{Margulis function}.

The main tool in the proof of Margulis's theorem is the Bowen-Margulis measure, which is the unique measure of maximal entropy (MME for short) for the Anosov flow (cf. \cite{Bo2}). Margulis \cite{Mar2} gave an explicit construction of this measure, and showed that it is mixing, and the conditional measures on stable/unstable manifolds have the scaling property, (i.e., contract/expand with a uniform rate under the geodesic flow), and are invariant under unstable/stable holonomies. Margulis \cite{Mar2} then proved \eqref{e:mar} using these ergodic properties of the Bowen-Margulis measure.

The ergodic theory of the geodesic flow on a closed rank one manifold of nonpositive curvature was developed by Pesin \cite{Pe1,Pe2} in 1970s. In this case the geodesic flow exhibits nonuniformly hyperbolic behavior (cf. \cite{BP}). In 1984, A.~Katok \cite{BuKa} conjectured that such geodesic flow also admits a unique MME. In 1998, Katok's conjecture was settled by Knieper \cite{Kn2}. In his proof, Knieper used Patterson-Sullivan measures on the boundary at infinity of the universal cover of $M$ to construct a MME (called \emph{Knieper measure}), and showed that this measure is the unique MME. Knieper \cite{Kn1} used his measure to obtain the following asymptotic estimates: there exists $C>0$ such that
$$\frac{1}{C}\le b_t(x)/e^{ht}\le C$$
for any $x\in X$.
However, it is difficult to improve the above to the Margulis-type asymptotic estimates \eqref{e:mar}, see the remark after \cite[Chapter 5, Theorem 3.1]{Kn3}. An unpublished preprint \cite{Gun} also contains many inspiring ideas to this problem. Recently, a breakthrough was made by Link (\cite[Theorem C]{Link}), where an asymptotic estimate for the orbit counting function is obtained for a CAT$(0)$ space, and as a consequence \eqref{e:mar} is established for rank one manifolds of nonpositve curvature.

A twin problem is the asymptotics of the number of free-homotopy classes of closed geodesics.  Margulis \cite{Mar1, Mar2} proved that in the negative curvature case
\begin{equation}\label{e:mar2}
\begin{aligned}
\lim_{t\to \infty}\#P(t)/\frac{e^{ht}}{ht}=1
\end{aligned}
\end{equation}
where $P(t)$ is the set of free-homotopy classes containing a closed geodesic with length at most $t$.
Recently Ricks \cite{Ri} proved \eqref{e:mar2} for rank one locally CAT$(0)$ spaces, which include rank one manifolds of nonpositve curvature. Later, generalizing Ricks' method, Climenhaga, Knieper and War \cite{CKW2} proved \eqref{e:mar2} for a class of manifolds (including all surfaces of genus at least $2$) without conjugate points, and the author \cite{Wu} proved \eqref{e:mar2} for rank one manifolds without focal points.

\subsection{Statement of main results}
In this paper, we first present an alternative approach to establishing volume asymptotics \eqref{e:mar} for rank one manifolds of nonpositive curvature. Then we study properties of the Margulis function and obtain related rigidity results.

Suppose that $(M,g)$ is a $C^{\infty}$ closed $n$-dimensional Riemannian manifold,
where $g$ is a Riemannian metric. Let $\pi: SM\to M$ be the unit tangent bundle over $M$. For each $v\in S_pM$,
we always denote by $c_{v}: \RR\to M$ the unique geodesic on $M$ satisfying the initial conditions $c_v(0)=p$ and $\dot c_v(0)=v$. The geodesic flow $\phi=(\phi^{t})_{t\in\mathbb{R}}$ (generated by the Riemannian metric $g$) on $SM$ is defined as:
\[
\phi^{t}: SM \rightarrow SM, \qquad (p,v) \mapsto
(c_{v}(t),\dot c_{v}(t)),\ \ \ \ \forall\ t\in \RR .
\]

A vector field $J(t)$ along a geodesic $c:\RR\to M$ is called a \emph{Jacobi field} if it satisfies the \emph{Jacobi equation}:
\[J''+R(J, \dot c)\dot c=0\]
where $R$ is the Riemannian curvature tensor and\ $'$\ denotes the covariant derivative along $c$.

A Jacobi field $J(t)$ along a geodesic $c(t)$ is called \emph{parallel} if $J'(t)=0$ for all $t\in \RR$. The notion of \emph{rank} is defined as follows.
\begin{definition}
For each $v \in SM$, we define \text{rank}($v$) to be the dimension of the vector space of parallel Jacobi fields along the geodesic $c_{v}$, and \text{rank}($M$):=$\min\{$\text{rank}$(v): v \in SM\}$. For a geodesic $c$ we define \text{rank}($c$):=\text{rank}($\dot c(t)$), $\forall\ t\in \mathbb{R}$.
\end{definition}

We let $M$ be a rank one closed Riemannian manifold of nonpositive curvature. Then $SM$ splits into two invariant subsets under the geodesic flow: the regular set $\Reg:= \{v\in SM: \text{rank}(v)=1\}$, and the singular set $\Sing:= SM \setminus \Reg$.

We have the following Margulis-type asymptotic estimates:
\begin{thmm}\label{margulis}
Let $M$ be a rank one closed Riemannian manifold of nonpositive curvature, $X$ the universal cover of $M$. Then
$$\lim_{t\to \infty}b_t(x)/\frac{e^{ht}}{h}=c(x),$$
where $b_t(x)$ is the Riemannian volume of the ball of radius $t>0$ around $x\in X$, $h$ the topological entropy of the geodesic flow, and $c: X\to \RR$ is a continuous function.
\end{thmm}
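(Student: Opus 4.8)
The plan is to follow Margulis's original strategy, but with the Knieper measure (the unique measure of maximal entropy constructed via Patterson–Sullivan measures) playing the role of the Bowen–Margulis measure. The starting point is the standard integral-geometric formula expressing the volume of a ball: writing $m$ for the Knieper measure on $SX$ (or rather its lift), and decomposing $b_t(x)$ over the unit sphere $S_xX$ together with the radial parameter, one obtains an expression of the form $b_t(x)=\int_{S_xX}\int_0^t (\text{Jacobian factor})\,ds\,d\theta_x(\xi)$. The key is to relate this Jacobian to the conditional measures of the Knieper measure along (un)stable horospheres, using that the Patterson–Sullivan measures $\{\mu_x\}$ satisfy the conformal density relation $\frac{d\mu_y}{d\mu_x}(\xi)=e^{-h\,b_\xi(y,x)}$ where $b_\xi$ is the Busemann cocycle, and $h$ is the topological entropy (which equals the critical exponent here).

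The core steps I would carry out, in order, are: (1) Express $b_t(x)$ as an integral over the boundary $\partial X$ against the Patterson–Sullivan measure $\mu_x$, of an inner integral over the radial direction; the conformal density property converts the radial growth into a factor $e^{hs}$ up to controlled error coming from the fact that spheres are not exactly horospheres. (2) Invoke the mixing property of the Knieper measure with respect to the geodesic flow (due to Babillot, or Knieper) to show that the suitably normalized measures of "spherical shells" equidistribute; this is where the asymptotic $e^{ht}/h$ emerges, the $1/h$ being the integral $\int_0^\infty e^{-hs}\,ds$ after renormalization. (3) Identify the limit $c(x)$ as an explicit expression, something like $c(x)=\frac{1}{h\cdot m(SM)}\int_{\partial X}\left(\int_{\partial X}\,\text{(Gromov product / Busemann)}\,d\mu_x\right)$ built from the Patterson–Sullivan measures based at $x$; since $x\mapsto\mu_x$ varies continuously in the weak-$*$ topology (the conformal density relation is continuous in $x$), continuity of $c(x)$ follows. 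One must be careful that the flat/singular part $\Sing$ has zero Knieper measure (Knieper's theorem), so it does not contribute to the leading asymptotics, and that rank one manifolds are "reasonably" behaved at infinity (e.g. the Patterson–Sullivan measure gives full mass to the radial/conical limit set).

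The main obstacle I anticipate — and the reason this is harder than the negative-curvature case — is replacing Margulis's two clean ingredients (exact scaling of conditional measures under the flow, and exact holonomy invariance) with their approximate, nonuniformly hyperbolic analogues. In nonpositive curvature the stable and unstable horospheres can share flat pieces, the Busemann functions are only $C^{1,1}$ (not smooth), and the conditional measures of the Knieper measure are only known to transform correctly up to the behavior of the Patterson–Sullivan measures, which are not a priori doubling. Concretely, the difficult estimate is showing that the "error" between the volume of a metric ball and the corresponding horospherical object is negligible on the exponential scale, uniformly in $x$ over a compact fundamental domain — this requires quantitative control of the divergence of geodesics (shadowing/contraction rates) that holds only on the regular set and must be combined with the fact that the bad set carries no entropy. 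I would isolate this as a separate lemma: that $\limsup$ and $\liminf$ of $b_t(x)/(e^{ht}/h)$ agree, proved by sandwiching between inner and outer horospherical volumes and letting an auxiliary thickening parameter go to zero after $t\to\infty$, exploiting mixing to make the two bounds coincide in the limit. Continuity of $c$ is then comparatively soft, following from uniform convergence on compact sets (an equicontinuity/Dini-type argument) once the pointwise limit and its integral representation are established.
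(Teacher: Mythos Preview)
Your general philosophy is sound --- mixing of the Knieper measure plus the Patterson--Sullivan conformal density, with care for the singular set --- but the proposal has two genuine gaps that prevent it from going through as written.

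First, step (1) does not work as stated. There is no direct formula expressing the Riemannian spherical volume element on $S(x,t)$ in terms of the Patterson--Sullivan measure $\mu_x$; the conformal relation $d\mu_y/d\mu_x = e^{-hb_\xi(y,x)}$ governs how $\mu$ changes with basepoint, not how the Riemannian Jacobian grows along rays. The paper therefore does \emph{not} estimate $b_t(x)$ directly. Instead it passes through the orbit counting function $a_t(x,y)=\#\{\gamma\in\Gamma:\gamma y\in B(x,t)\}$, proves $a_t(x,y)\sim \frac{1}{h}e^{ht}c(x,y)$, and then integrates $y$ over a fundamental domain $\mathcal F$ to recover $b_t(x)$. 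The point is that $a_t(x,y)$ can be rewritten as a count of intersections between $\phi^{-t}$-translates of flow boxes near $S_yX$ and flow boxes near $S_xX$, and \emph{this} is the quantity to which mixing of the Knieper measure applies cleanly.

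Second, your ``sandwiching between inner and outer horospherical volumes'' is too vague to substitute for Margulis's exact scaling/holonomy. The paper's concrete replacement is: (i) build local product flow boxes $N^\alpha\subset H^{-1}(N^-\times N^+\times[-\alpha,\alpha])$ near each regular vector and assemble them into a countable \emph{regular partition-cover} of $S_xX$ and $S_yX$; (ii) use the \emph{$\pi$-convergence theorem} (Papasoglu--Swenson, as in Ricks) to show that for $\gamma$ moving points far into a box one has $\gamma V^+\subset N^+$ and $\gamma^{-1}N^-\subset V^-$ --- this is the local-uniform-expansion lemma that replaces holonomy invariance; (iii) compute $m(N^\gamma)/m(V^\beta)$ directly from the Busemann cocycle to get the $e^{-ht}$ scaling up to $e^{\pm C\varepsilon}$. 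Finally, the singular set must be handled not by the fact that it has zero Knieper measure (which controls nothing about the Riemannian volume growth), but by Knieper's entropy gap $h_{\mathrm{top}}(\mathrm{Sing})<h$, which gives $a_t^1(x,y)\le Ce^{h't}$ with $h'<h$ for the count of $\gamma$ with $c_{x,\gamma y}$ singular; a separate Shadow-Lemma argument controls the tail of the partition-cover. Without these two ingredients --- orbit counting via flow boxes plus the $\pi$-convergence theorem, and the entropy gap for the singular contribution --- the $\limsup=\liminf$ step cannot be closed.
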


Theorem \ref{margulis} can be derived from \cite[Theorem C]{Link}, in which an asymptotic estimate for the orbit counting function was obtained for a rank one CAT$(0)$ space. Link's proof is quite geometric, and based on Roblin's method in negative curvature \cite{Rob}. We give an alternative proof in this paper using Margulis' approach which is more dynamical. Since we are dealing with manifolds, our proof can also be adapted to rank one manifolds without focal/conjugate points.

We use the notion of local product flow box and apply $\pi$-convergence theorem introduced by Ricks \cite{Ri} (see Section 2.4 and Section 3 below for more details). First we will establish the asymptotics formula for a pair of flow boxes (Subsections 4.1-4.4) using the mixing property of the unique MME and scaling property of Patterson-Sullivan measure. The asymptotics in \eqref{e:mar} involves countably many pairs of flow boxes and an issue of nonuniformity arises. To overcome this difficulty, we will apply Knieper's results and techniques (Lemmas \ref{singular} and \ref{nonuniform}).

In \cite{Ri, CKW2, Wu}, the authors also use flow boxes to calculate the asymptotic growth of the number of free homotopy classes of closed geodesics. Due to the equidistribution of closed geodesics, we only need consider one flow box and count the number of  self-intersections of the box under the geodesic flow. The technical novelties in this paper is that we have to deal with countably many pairs of flow boxes and the essential difficulty caused by nonuniform hyperbolicity. Even for one pair of flow boxes, the calculations involved are more complicated. Moreover we also consider the intersection components of a flow box with a face under the geodesic flow (see Subsection 4.1 below), which makes the calculation more subtle.

Manifolds without focal points/without conjugate points are natural generalizations of those of nonpositive curvature.
Theorem \ref{margulis} can be generalized to rank one manifolds without focal points and a certain class of manifolds without conjugate points as follows.
\begin{TheoremA'}\label{margulis1}
Let $M$ be either
\begin{enumerate}
  \item a rank one closed Riemannian manifold without focal points, or
  \item a closed manifold without conjugate points belonging to the class $\mathcal{H}$,
\end{enumerate}
and $X$ the universal cover of $M$.
Then
$$\lim_{t\to \infty}b_t(x)/\frac{e^{ht}}{h}=c(x),$$
where $b_t(x)$ is the Riemannian volume of the ball of radius $t>0$ around $x\in X$, $h$ the topological entropy of the geodesic flow, and $c: X\to \RR$ is a continuous function.
\end{TheoremA'}
We give the necessary definitions and discuss the proof of Theorem A' in the appendix. %The tools needed are already prepared in \cite{CKW2} and \cite{Wu}, so the proof can be modified accordingly.

%The main ingredients in the proof is the ergodic properties of Knieper measure and Patterson-Sullivan measures. The use of local product flow box and $\pi$-convergence theorem is crucial. We prepare these tools in Section 2.

The continuous function $c(x)$  is called \emph{Margulis function}. It is easy to see that
\begin{equation}\label{e:sphere}
\lim_{t\to \infty}s_t(x)/e^{ht}=c(x),
\end{equation}
where $s_t(x)$ is the spherical volume of the sphere $S(x,t)$ around $x\in X$ of radius $t>0$. It descends to a function on $M$, which we still denote by $c$.

In the negative curvature case, Katok conjectured that $c(x)$ is almost always not constant and not smooth. In \cite{Yue} Yue answered Katok's conjecture. He studied the uniqueness of harmonic measures associated to the strong stable foliation of the geodesic flow and obtained some rigidity results involving the Margulis function. We extend Yue's results to rank one manifolds of nonpositive curvature.
\begin{thmm}\label{function}
Let $M$ be a rank one closed $C^\infty$ Riemannian manifold of nonpositive curvature, $X$ the universal cover of $M$. Then
\begin{enumerate}
  \item The Margulis function $c$ is a $C^1$ function.
  \item If $c(x)\equiv C$, then for any $x\in X$,
  $$h=\int_{\partial X} tr U(x,\xi)d\tilde \mu_x(\xi)$$
where $U(x,\xi)$ and $tr U(x,\xi)$ are the second fundamental form and the mean curvature of the horosphere $H_x(\xi)$ respectively, and $\tilde \mu_x$ is the normalized Patterson-Sullivan measure.
\end{enumerate}
\end{thmm}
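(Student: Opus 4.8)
The plan is to extract the regularity and rigidity of $c$ from an explicit integral formula for $c(x)$ in terms of the Patterson–Sullivan densities. The first step is to recall that, as in Margulis's original argument (and as established in the flow-box analysis of Section 4), the Margulis function can be written as
\[
c(x) = \frac{1}{h}\,\int_{\partial X} f(x,\xi)\, d\mu_x(\xi)
\]
for a suitable density $f$ built from the conditional measures of the Knieper measure on horospheres; more usefully, using the cocycle property of the Patterson–Sullivan measures $\mu_x$ (namely $d\mu_x/d\mu_y(\xi) = e^{-h\, b_\xi(x,y)}$, where $b_\xi$ is the Busemann cocycle), one gets that $c$ is, up to a smooth positive factor coming from the Jacobian of the sphere geometry, the total mass $\|\mu_x\|$ of an unnormalized Patterson–Sullivan measure at $x$. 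The point is that $x\mapsto b_\xi(x,y)$ is $C^\infty$ on the regular set and $C^1$ globally on $X$ (Busemann functions in nonpositive curvature are $C^{1,1}$, and $C^2$ — indeed smooth — along regular geodesics; the second fundamental form $U(x,\xi)$ of the horosphere is the Hessian of $-b_\xi$), so differentiating under the integral sign gives
\[
\nabla_x \log \|\mu_x\| = -h \int_{\partial X} \nabla_x b_\xi(x,y)\, d\bar\mu_x(\xi).
\]

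For part (1), I would make this rigorous as follows. First show $c$ is the pointwise limit (hence, by Theorem A, uniform on compact sets) of the $C^\infty$ functions $x\mapsto h\, b_t(x)\, e^{-ht}$. To upgrade continuity to $C^1$, I would instead differentiate the integral representation: the family $\xi\mapsto b_\xi(x,y)$ depends on $x$ in a $C^1$ fashion with derivative $-\nabla_x b_\xi(x,\cdot)$ the unit vector at $x$ pointing away from $\xi$ (opposite the gradient of the Busemann function), and this is continuous in $(x,\xi)$ jointly because two distinct boundary points are joined to $x$ by a rank one (regular) geodesic on a set of full $\mu_x$-measure — here one invokes that $\bar\mu_x$ gives zero mass to the (lower-dimensional) set of $\xi$ for which the geodesic from $x$ is singular, a fact from Knieper's work already used in the proof of Theorem A via Lemmas \ref{singular} and \ref{nonuniform}. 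Dominated convergence then shows $x\mapsto \int b_\xi(x,y)\,d\bar\mu_x(\xi)$ is $C^1$, and tracking the (smooth) normalization yields $c\in C^1$.

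For part (2), assume $c\equiv C$. Then $\nabla_x \log c \equiv 0$, so from the formula above
\[
0 = \nabla_x \log c = \int_{\partial X}\!\big(\text{something}\big)\, d\bar\mu_x(\xi),
\]
and taking the divergence (or, more cleanly, applying the Laplacian $\Delta_x$ to $\log\|\mu_x\|$ and using $\Delta_x b_\xi(x,y) = -\operatorname{tr} U(x,\xi)$, which is the standard identity that the Laplacian of the Busemann function is minus the mean curvature of the horosphere) gives
\[
0 = \Delta_x \log c = -h \int_{\partial X} \operatorname{tr} U(x,\xi)\, d\bar\mu_x(\xi) + h^2\!\int_{\partial X}\!\big|\nabla_x b_\xi + \text{(mean)}\big|^2 d\bar\mu_x(\xi),
\]
where the quadratic term vanishes precisely because $\nabla_x \log c = 0$ forces the first moment of $\nabla_x b_\xi$ to be the relevant constant. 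What survives is $h = \int_{\partial X}\operatorname{tr} U(x,\xi)\, d\bar\mu_x(\xi)$, which is the claimed identity. (Alternatively, and perhaps more robustly, one differentiates the scaling relation $c(\phi$-translate$)$ along the geodesic flow: the flow-invariance of the Knieper measure combined with the transformation rule for horospherical conditionals shows $\frac{d}{dt}\log c(c_v(t)) = h - \operatorname{tr} U$ pointwise, integrate against $\bar\mu_x$ and use $c$ constant.)

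The main obstacle is the regularity input: one must know that the Busemann cocycle $x\mapsto b_\xi(x,y)$ is genuinely $C^1$ in $x$ with derivative depending continuously on $(x,\xi)$, and that $\operatorname{tr}U(x,\xi)$ is well-defined and $\bar\mu_x$-integrable. In nonpositive curvature without the focal/flat-strip hypotheses, horospheres need not be $C^2$ everywhere, but they are $C^2$ (the relevant stable Jacobi field estimates hold) along regular geodesics, and $\bar\mu_x$-a.e. $\xi$ is regular; so the second fundamental form $U(x,\xi)$ and its trace exist $\bar\mu_x$-a.e. and are bounded by curvature bounds, hence integrable. Making the differentiation-under-the-integral and the divergence computation rigorous on this full-measure regular set, and checking that the boundary-at-infinity "bad set" contributes nothing to any of the integrals, is where the real work lies; the rest is the standard Margulis-function bookkeeping.
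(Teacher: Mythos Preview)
Your approach is essentially the same as the paper's: identify $c(x)$ with the total mass of the (unnormalized) Patterson--Sullivan measure via the Busemann cocycle, deduce $C^1$ regularity from the $C^1$ regularity of $y\mapsto b_\xi(y,x)$ (which the paper obtains by citing Eschenburg rather than your a.e.\ regularity argument), and for part (2) apply the Laplacian using $\Delta_y b_\xi = \operatorname{tr} U$. The paper's execution is slightly cleaner than yours: rather than computing $\Delta\log c$ and arguing away the quadratic term, it simply applies $\Delta_y$ directly to the identity $\int_{\partial X} e^{-h b_\xi(y,x)}\,d\bar\mu_x(\xi)\equiv 1$ (which is immediate from $c\equiv C$), yielding $\int h(h-\operatorname{tr} U(y,\xi))e^{-h b_\xi(y,x)}\,d\bar\mu_x(\xi)=0$ and hence the claimed formula in one line.
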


To the best of our knowledge, the uniqueness of harmonic measures in nonpositive curvature is not known.
\begin{question}
For manifolds of nonpositive curvature, do we have a unique harmonic measure associated to the strong stable foliation of the geodesic flow?
Do the leaves of the strong stable foliation have polynomial growth?
\end{question}
\begin{remark}
For rank one manifolds without focal points, the uniqueness of harmonic measures associated to the weak stable foliation of the geodesic flow is proved in \cite[Theorem 3.1]{LS}.
\end{remark}

If $\dim M=2$, then $\text{Vol}(B^s(x,r))=2r$ where $B^s(x,r)$ is any ball of radius $r$ in a strong stable manifold. In this case, $B^s(x,r)$ is just a curve. Hence the leaves of the strong stable foliation have polynomial growth. Combining with a recent result in \cite{Cl} on the unique ergodicity of the horocycle flow, we can show that there is a unique harmonic measure associated to the strong stable foliation for rank one surfaces of nonpositive curvature without flat strips. Then we can prove the following rigidity result.
\begin{thmm}\label{rigidity}
Let $M$ be a rank one closed Riemannian surface of nonpositive curvature without flat strips. Then $c(x)\equiv C$ if and only if $M$ has constant negative curvature.
\end{thmm}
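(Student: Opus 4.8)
The plan is to prove Theorem~\ref{rigidity} by combining the integral formula from Theorem~\ref{function}(2) with the two-dimensional harmonic-measure rigidity philosophy of Yue, adapted to the nonpositive curvature setting with no flat strips. First I would observe that the ``if'' direction is classical: if $M$ has constant negative curvature $-k^2$, then all horospheres have constant mean curvature, the Patterson--Sullivan measure coincides (up to normalization) with the visibility measure, and an explicit computation of $b_t(x)$ on the hyperbolic plane gives $c(x)\equiv C$; in fact in the surface case the absence of flat strips forces rank one behavior everywhere, so everything reduces to the hyperbolic model on the universal cover.

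For the ``only if'' direction, assume $c(x)\equiv C$. Then Theorem~\ref{function}(2) gives, for every $x\in X$,
\[
h=\int_{\partial X}\operatorname{tr}U(x,\xi)\,d\bar\mu_x(\xi).
\]
In dimension two, $\operatorname{tr}U(x,\xi)$ is just the geodesic curvature $u(x,\xi)$ of the horocycle $H_x(\xi)$, a scalar function. The key point I would exploit is that, as indicated in the paragraph before the theorem, in the surface case the strong stable leaves have linear (hence polynomial) growth, so by the result of \cite{Cl} on unique ergodicity of the horocycle flow there is a unique harmonic measure $\nu$ for the strong stable foliation, and this harmonic measure is exactly the one built from the family $\{\bar\mu_x\}$ of Patterson--Sullivan measures. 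Uniqueness of the harmonic measure is the engine: it forces the family $\{\bar\mu_x\}$ to satisfy a strong invariance/rigidity constraint. Concretely, I would run the following argument. Harmonicity of $\nu$ means that the leafwise heat semigroup fixes $\nu$; on a surface this is equivalent to saying that the logarithmic derivative of the Radon--Nikodym cocycle of $\{\bar\mu_x\}$ along strong stable leaves is a leafwise harmonic function, which in turn is tied to the mean curvature $u(x,\xi)$. The hypothesis $c\equiv C$ says the $\bar\mu_x$-average of $u(\cdot,\xi)$ is the constant $h$ at every $x$. Combining the two, I would deduce that $u(x,\xi)$ is $\bar\mu_x$-a.e.\ equal to a constant, and then by continuity (rank one + no flat strips gives enough regularity of horospheres, via Theorem~\ref{function}(1) and the $C^1$ nature of the Busemann functions on the regular set, which in the no-flat-strip surface case is all of $SM$) that $u(x,\xi)\equiv h$ for all $x,\xi$. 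Thus every horocycle has constant geodesic curvature $h$.

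From ``all horocycles have constant geodesic curvature'' I would conclude constant curvature as follows. Differentiating the Riccati equation $u' + u^2 + K = 0$ satisfied along geodesics by the geodesic curvature $u(t)$ of the stable horocycles (here $K$ is the Gaussian curvature along the geodesic), the condition $u\equiv h$ forces $u'=0$, hence $K\equiv -h^2$ along every geodesic, i.e.\ $K\equiv -h^2$ on all of $M$. Since $M$ is compact with $K\le 0$ and rank one, $h>0$, so $M$ has constant negative curvature $-h^2$, completing the proof.

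The main obstacle, I expect, is making rigorous the step ``unique harmonic measure $\Rightarrow$ $u(x,\xi)$ is a.e.\ constant.'' This requires carefully identifying the harmonic measure $\nu$ with the Patterson--Sullivan construction and correctly relating the harmonicity condition to the mean curvature cocycle — the heart of Yue's argument in \cite{Yue}, which in negative curvature uses smoothness of the stable foliation and the horocycle flow. In the nonpositive curvature case the stable foliation is only Hölder and the horocycle flow is merely continuous, so one must either work on the full-measure regular set (using that in the no-flat-strip surface case $\Sing$ has zero measure for the relevant measures, by Knieper's results quoted as Lemma~\ref{singular}) or invoke the unique ergodicity from \cite{Cl} to bypass the regularity issues. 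A secondary technical point is justifying the passage from ``$\bar\mu_x$-a.e.\ constant'' to ``everywhere constant,'' which I would handle using the continuity of $(x,\xi)\mapsto u(x,\xi)$ away from $\Sing$ and the fact that $\Sing$ is closed with empty interior (rank one), together with the full support of $\bar\mu_x$.
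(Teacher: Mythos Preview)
Your approach differs substantially from the paper's, and the crucial step contains a genuine gap. From $c\equiv C$ and Theorem~\ref{function}(2) you obtain $h=\int_{\partial X}u(x,\xi)\,d\bar\mu_x(\xi)$ for every $x$, and you then want to conclude that $u(x,\xi)\equiv h$. But uniqueness of the harmonic measure for the strong stable foliation does not give this: uniqueness is a statement about \emph{measures}, not about integrands, and knowing that the $\bar\mu_x$-average of $u(x,\cdot)$ equals $h$ at every $x$ places no pointwise constraint on $u$ without a further mechanism. Your sketch (``the logarithmic derivative of the Radon--Nikodym cocycle is leafwise harmonic, which is tied to $u$'') does not supply such a mechanism; the Radon--Nikodym cocycle of the Patterson--Sullivan family is governed by the Busemann function, and its relation to $u$ is through an integral along geodesics, not a pointwise identity. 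You correctly flag this as the main obstacle, but you do not indicate a way through it, and I do not see one along these lines.

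The paper avoids this entirely. It first derives, from the asymptotics of $s_R(x)/e^{hR}$ and its derivatives, the formula $h^2=\int_{SM}\bigl(-\operatorname{tr}\dot U+(\operatorname{tr}U)^2\bigr)\,dw^s$, where $w^s$ is the unique $h^L_s$-invariant probability measure. In dimension two the Riccati equation $-\dot U+U^2+K=0$ turns the integrand into $-K$, giving $h^2=\int_{SM}(-K)\,dw^s$. The uniqueness of the harmonic measure is used only to identify $w^s$: one shows that $\frac{1}{C}\int_M c(x)\bigl(\int_{S_xM}\varphi\,d\tilde\mu_x\bigr)\,d\mathrm{Vol}(x)$ is harmonic for the stable foliation, hence equals $\int\varphi\,dw^s$. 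With $c$ constant this collapses to $\frac{1}{\mathrm{Vol}(M)}\int_M(-K)\,d\mathrm{Vol}=-2\pi E/\mathrm{Vol}(M)$ by Gauss--Bonnet. The conclusion then comes from Katok's entropy rigidity: $h^2=-2\pi E/\mathrm{Vol}(M)$ holds if and only if $M$ has constant negative curvature. Thus the paper never attempts to prove $u$ is constant; it routes the rigidity through an integral identity and Katok's theorem instead.
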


Without the uniqueness of harmonic measures, we can still obtain some rigidity results in arbitrary dimension. The flip map $F:SM\to SM$ is defined as $F(v):=-v$. By the construction, the Knieper measure $\lm$ is flip invariant. Consider the conditional measures $\{\bar\mu_x\}_{x\in M}$ of $\lm$ with respect to the partition $SM=\cup_{x\in M}S_xM$. $\bar\mu_x$ can be identified as measures on $\partial X$, and it would be natural to consider if $\bar\mu_x$ and the normalized Patterson-Sullivan measures $\tilde\mu_x$ coincide.
Yue \cite{Yue1, Yue2} obtained related rigidity results in negative curvature, which can be generalized to the nonpositive curvature case with the help of the Margulis function.
\begin{thmm}\label{flip}
The conditional measures $\{\bar\mu_x\}_{x\in M}$ of the Knieper measure coincide almost everywhere with the normalized Patterson-Sullivan measures $\tilde\mu_x$ if and only if $M$ is locally symmetric.
\end{thmm}

\section{Geometric and ergodic toolbox}
We prepare some geometric and ergodic tools for rank one manifolds of nonpositive curvature, which will be used in subsequent sections.

\subsection{Boundary at infinity}
Let $M$ be a closed Riemannian manifold of nonpositive curvature, and $\pr: X\to M$ the universal cover of $M$. Let $\C \simeq \pi_1(M)$ be the group of deck transformations on $X$, so that each $\c\in \C$ acts isometrically on $X$. Let $\F$ be a fundamental domain with respect to $\C$.
%Since $M=X/\Gamma$ is compact, each $\c\in \Gamma$ is \emph{axial} (cf. \cite{CS} Lemma 2.1), that is, there exists a geodesic $c$ and $t_{0}>0$ such that $\c(c(t)) = c(t+t_{0})$ for every $t\in \mathbb{R}$. Correspondingly $c$ is called an \emph{axis} of $\c$ and we denote $|\c|:=t_0$ where $t_0$ is minimal with the above property.
Denote by $d$ both the distance functions on $M$ and $X$ induced by Riemannian metrics. The Sasaki metrics on $SM$ and $SX$ are also denoted by $d$ if there is no confusion.

We still denote by $\pr: SX\to SM$ and $\c: SX\to SX$ the map on unit tangent bundles induced by $\pr$ and $\c\in \C$. From now on, we use an underline to denote objects in $M$ and $SM$, e.g. for a geodesic $c$ in $X$ and $v\in SX$, $\lc:=\pr c$, $\lv:=\pr v$ denote their projections to $M$ and $SM$ respectively.

One consequence of nonpositive curvature is the convexity of the distance functions such as $t\mapsto d(c_{1}(t),c_{2}(t))$ and $t\mapsto d(q,c_{1}(t))$ where $c_{1}$ and $c_{2}$ are two geodesics in $X$ and $q\in X$. Another frequently used property is the comparison theorem: for every triple of distinct points $x, y, z \in X$, the geodesic triangle is no fatter than the corresponding comparison triangle in the Euclidean plane $\RR^2$ with the same edge lengths. As a consequence, if $\triangle=\triangle(x_1,x_2,x_3)$ is a triangle in $X$ and $\bar\triangle=\triangle(\bar x_1,\bar x_2,\bar x_3)$ is the comparison triangle in $\RR^2$ (i.e., $d(x_i,x_j)=d(\bar x_i,\bar x_j), i,j=1,2,3$), then
\begin{equation}\label{e:comp}
\a_i\le \bar\a_i,
\end{equation}
where $\a_i$ are angles of $\triangle$ at $x_i$ and $\bar \a_i$ is the corresponding angles in $\bar x_i$.
See \cite{Bal} for more details.

We call two geodesics $c_{1}$ and $c_{2}$ on $X$ \emph{positively asymptotic} or just \emph{asymptotic} if there is a positive number $C > 0$ such that $d(c_{1}(t),c_{2}(t)) \leq C, ~~\forall~ t \geq 0.$
%We say $c_{1}$ and $c_{2}$ are \emph{negatively asymptotic} if \eqref{e1} holds for all $t \leq 0$. $c_{1}$ and $c_{2}$ are said to be \emph{bi-asymptotic} or \emph{parallel} if they are both positively and negatively asymptotic.
The relation of asymptoticity is an equivalence relation between geodesics on $X$. The class of geodesics that are asymptotic to a given geodesic $c_v/c_{-v}$ is denoted by $c_v(+\infty)$/$c_v(-\infty)$ or $v^+/v^-$ respectively. We call them \emph{points at infinity}. Obviously, $c_{v}(-\infty)=c_{-v}(+\infty)$. We call the set $\pX$ of all points at infinity the \emph{boundary at infinity}. If $\eta=v^+\in \pX$, we say $v$ \emph{points at $\eta.$}

We can define the visual topology on $\pX$ following \cite{Eb1} and \cite{EO}. For each $p$, there is a bijection $f_p: S_pX\to \pX$ defined by $$f_p(v)=v^+, v\in S_pX.$$
So for each $p\in M$, $f_p$ induces a topology on $\pX$ from the usual topology on $S_pX$.
%Given $p,q\in X$, let $\psi: S_pX\to S_qX$ be the map such that $\psi(v)$ is the unique vector in $S_qX$ asymptotic to $v\in S_pX$.
%$\psi$ is in fact a homoeomorphism.
The topology on $\pX$ induced by $f_p$ is independent of $p\in X$, and is called the \emph{visual topology} on $\pX$.

Visual topology on $\pX$ and the manifold topology on $X$ can be extended naturally to the so-called \emph{cone topology} on $\overline X:= X\cup \pX$.
 %by requiring the map $\varphi$ defined as follows is a homeomorphism. Fix $p\in X$. For each $v\in T_pX$ with $\|v\|\le 1$, define
%\[
%\varphi(v):=\begin{cases}\exp (\frac{v}{1-\|v\|}) &
 %  \text{if} \ \|v\|<1; \\
%f_p(v) &\text{if} \ \|v\|=1.
%\end{cases}
%\]
Under cone topology, $\overline{X}$ is homeomorphic to the closed unit ball in $\mathbb{R}^{n}$, and $\pX$ is homeomorphic to the unit sphere $\mathbb{S}^{n-1}$. For $x,y\in \overline{X}$, we denote by $c_{x, y}$ the geodesic connection $x$ and $y$ if it exists.

The \emph{angle metric} on $\partial X$ is defined as
$$\angle(\xi,\eta):=\sup_{x\in X}\angle_x(\xi,\eta), \quad \forall \xi, \eta\in \pX,$$
where $\angle_x(\xi,\eta)$ denotes the angle between the unit tangent vectors at $x$ of the geodesics $c_{x,\xi}$ and $c_{x,\eta}$.
Then the angle metric defines a path metric $d_T$ on $\pX$, called the \emph{Tits metric}. More precisely, for a continuous curve $c:[0,1]\to \pX$, define the length
$L(c):=\sum_{i=0}^{k-1}\angle(c(t_i), c(t_{i+1}))$ where $0=t_0\le t_1\le \cdots \le t_k=1$ is a subdivision of $[0,1]$. Then we can define the path metric $d_T(\xi,\eta):=\inf L(c)$, where the infimum is taken over all the continuous curves joining $\xi$ and $\eta$. Clearly, $d_T\ge \angle$.
The angle metric induces a topology on $\partial X$ finer than the visual topology. Moreover, $v\in SX$ is regular if and only if that $d_T(v^-,v^+)>\pi$. In particular, if $M$ has negative curvature everywhere, then $\angle(\xi,\eta)=\pi$ and hence $d_T(\xi,\eta)=\infty$ for any $\xi\neq \eta\in \pX$. See \cite{BGS} for more information on Tits metric.

\subsection{Busemann function}
For each pair of points $(p,q)\in X \times X$ and each point at infinity $\xi \in \pX$, the \emph{Busemann function based at $\xi$ and normalized by $p$} is
$$b_{\xi}(q,p):=\lim_{t\rightarrow +\infty}\big(d(q,c_{p,\xi}(t))-t\big),$$
where $c_{p,\xi}$ is the unique geodesic from $p$ and pointing at $\xi$.
The Busemann function $b_{\xi}(q,p)$ is well-defined since the function $t \mapsto d(q,c_{p,\xi}(t))-t$
is bounded from above by $d(p,q)$, and decreasing in $t$ (this can be checked by using the triangle inequality). Obviously, we have
$$|b_{\xi}(q,p)|\le d(q,p).$$
If $v\in S_pX$ points at $\xi\in \pX$, we also write $b_v(q):=b_{\xi}(q,p).$

The level sets of the Busemann function $b_{\xi}(q,p)$ are called the \emph{horospheres} centered at $\xi$. The horosphere through $p$ based at $\xi\in \pX$, is denoted by $H_p(\xi)$.
For more details of the Busemann functions and horospheres, please see ~\cite{DPS,Ru1,Ru2}.
%Here we are concerned with the equicontinuity property of Busemann functions.

%We say that the manifold $M$ satisfies the \emph{axiom of asymptoticity} if for every $v\in SX$
%the following statement holds:
%For any choice of $x_n, x\in X, v_n\in SX$, $x_n\to x$, $v_n\to v$ and a sequence of numbers $t_n\to \infty$, the sequence $c_n$ of geodesic segments joining $x_n$ to $c_{v_n}(t_n)$ converges to an asymptote of $c_v$.

According to \cite[Theorem 6.1]{Pe2} and \cite[Lemma 1.2]{Ru1}, we have the following continuity property of Busemann functions.
%\label{horosphere}
%Let $M$ be a closed Riemannian manifold without conjugate points that satisfies the the axiom of asymptoticity. Then for every $p\in X$, the map $\xi\mapsto H_\xi(p)$ is continuous in the following sense: if $\xi_n\to \xi \in \pX$ and $K\subset X$ is compact, then $H_{\xi_n}(p)\cap K\to H_\xi(p)\cap K$ uniformly in the Hausdorff topology.
%\end{proposition}

%\begin{proposition}(Cf. \cite[Theorem 5.2]{Pe2})\label{axiom}
%If the manifold $M$ has no focal points, then it satisfies the the axiom of asymptoticity.
%\end{proposition}
%By Proposition \ref{axiom}, Proposition \ref{horosphere} applies to manifolds without focal points. So we have the following corollary
\begin{lemma}\label{continuous} (Cf. \cite[Corollary 2.7]{Wu}):
The functions $(v,q)\mapsto b_v(q)$ and $(\xi,p,q)\mapsto b_\xi(p,q)$ are continuous on $SX\times X$ and $\pX\times X\times X$ respectively.
\end{lemma}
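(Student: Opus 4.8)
The plan is to reduce both assertions to the continuity of the single function $(\xi,q)\mapsto b_\xi(q,o)$ for one fixed base point $o\in X$, and then to separate the harmless dependence on the foot points from the delicate dependence on the boundary point. First I would invoke the cocycle identity $b_\xi(p,q)=b_\xi(p,o)-b_\xi(q,o)$ (standard, and contained in the references \cite{Ru1,DPS}), which shows that joint continuity of $(\xi,p,q)\mapsto b_\xi(p,q)$ on $\pX\times X\times X$ follows once $(\xi,q)\mapsto b_\xi(q,o)$ is known to be continuous. Since moreover $b_v(q)=b_{v^+}(q,\pi(v))$ and the map $v\mapsto(v^+,\pi(v))$ is continuous from $SX$ to $\pX\times X$ for the cone topology (geodesics depend continuously on their initial vector, so $v\mapsto v^+$ is continuous; cf.\ \cite{EO}), the continuity of $(v,q)\mapsto b_v(q)$ on $SX\times X$ also reduces to the same statement.

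Next I would dispose of the foot point variable, which is elementary: from the definition and the triangle inequality,
\[
|b_\xi(q,o)-b_\xi(q',o)|=\lim_{t\to\infty}\big|d(q,c_{o,\xi}(t))-d(q',c_{o,\xi}(t))\big|\le d(q,q'),
\]
so $q\mapsto b_\xi(q,o)$ is $1$-Lipschitz, uniformly in $\xi$. For the dependence on $\xi$, one half comes for free: fixing $q,o$ and $\ep>0$, the monotone convergence of $t\mapsto d(q,c_{o,\xi}(t))-t$ down to $b_\xi(q,o)$ lets me choose $T$ with $d(q,c_{o,\xi}(T))-T<b_\xi(q,o)+\ep$; as $c_{o,\xi'}(T)$ depends continuously on $\xi'$ (geodesic rays from $o$ vary continuously with their endpoint at infinity, cf.\ \cite{EO}), the same inequality persists for $\xi'$ near $\xi$, and monotonicity in $t$ then gives $b_{\xi'}(q,o)\le d(q,c_{o,\xi'}(T))-T<b_\xi(q,o)+\ep$. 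Hence $\limsup_{\xi'\to\xi}b_{\xi'}(q,o)\le b_\xi(q,o)$.

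The main obstacle is the reverse (lower semicontinuity) estimate in $\xi$: because the defining limit is monotone only from above, genuine input from nonpositive curvature is needed. Here I would use that on the Hadamard manifold $X$ the Busemann function $b_\xi(\cdot,o)$ is $C^1$ with $\nabla_q b_\xi(q,o)=-v(q,\xi)$, where $v(q,\xi)\in S_qX$ is the unit vector at $q$ pointing at $\xi$, and where $(q,\xi)\mapsto v(q,\xi)=f_q^{-1}(\xi)$ is continuous (this is the content of \cite[Theorem 6.1]{Pe2}; see also \cite[Lemma 1.2]{Ru1}, noting that nonpositive curvature implies the absence of focal points). Integrating this gradient along the geodesic segment $\sigma$ from $o$ to $q$ and using $b_\xi(o,o)=0$ gives
\[
b_\xi(q,o)=-\int_0^{d(o,q)}\big\langle v(\sigma(s),\xi),\dot\sigma(s)\big\rangle\,ds,
\]
whose integrand is jointly continuous in $(s,\xi)$ on the compact set $[0,d(o,q)]\times\pX$; hence $b_\xi(q,o)$ is continuous in $\xi$ for fixed $q,o$. (Alternatively one can bypass the regularity statement by passing to the rays $c_{o,\xi_n}\to c_{o,\xi}$, which converge uniformly on compacta, and using convexity of $t\mapsto d(q,c_{o,\xi_n}(t))$ together with a uniform bound on the convergence rate to exchange the limits, which is essentially the argument of \cite[Lemma 1.2]{Ru1}.) Combining this with the $1$-Lipschitz dependence on $q$ yields joint continuity of $(\xi,q)\mapsto b_\xi(q,o)$, and the two statements of the lemma follow by the reductions of the first paragraph.
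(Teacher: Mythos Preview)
The paper does not actually prove this lemma: it is stated with the citation ``Cf.\ \cite[Corollary 2.7]{Wu}'' and prefaced by the sentence ``According to \cite[Theorem 6.1]{Pe2} and \cite[Lemma 1.2]{Ru1}, we have the following continuity property of Busemann functions.'' So there is no proof in the paper to compare against; the result is imported from the references.

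Your argument is correct and is essentially the standard route implicit in those references. The reductions via the cocycle identity $b_\xi(p,q)=b_\xi(p,o)-b_\xi(q,o)$ and the factorization $b_v(q)=b_{v^+}(q,\pi v)$ through the continuous map $v\mapsto(v^+,\pi v)$ are fine; the $1$-Lipschitz dependence on the foot point and the upper semicontinuity in $\xi$ are elementary as you write them. The only substantive step is lower semicontinuity in $\xi$, and your use of the $C^1$ regularity of $b_\xi(\cdot,o)$ with $\nabla_q b_\xi(q,o)=-v(q,\xi)$ together with the joint continuity of $(q,\xi)\mapsto v(q,\xi)$ is exactly what \cite[Theorem 6.1]{Pe2} and \cite[Lemma 1.2]{Ru1} supply in the nonpositive-curvature (indeed, no-focal-points) setting. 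The integral representation along the segment from $o$ to $q$ then gives continuity in $\xi$ on a compact parameter interval, and joint continuity follows. Your alternative sketch via uniform convergence of rays and convexity is also the argument of \cite[Lemma 1.2]{Ru1}, so either path is acceptable.
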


In fact, we have the following equicontinuity property of Busemann function $v\mapsto b_v(q)$.
\begin{lemma}\label{equicon1}(Cf. \cite[Lemma 2.9]{Wu})
Let $p\in X$, $A\subset S_pX$ be closed, and $B\subset X$ be such
that $A^+:= \{v^+: v\in A\}$ and $B^\infty := \{\lim_n q_n \in \pX: q_n\in B\}$ are
disjoint subsets of $\pX$. Then the family of functions $A\to \RR$ indexed
by $B$ and given by $v\mapsto b_v(q)$ are equicontinuous: for every $\e>0$ there
exists $\d >0$ such that if $\angle_p(v,w)<\d$, then $|b_v(q)-b_w(q)|<\e$ for every $q\in B$.
\end{lemma}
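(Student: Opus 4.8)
The plan is to argue by contradiction, exploiting compactness of the visual sphere $S_pX$ and of the cone compactification $\overline X$. Suppose the conclusion fails: there exist $\e_0>0$ and sequences $v_n,w_n\in A$ and $q_n\in B$ with $\angle_p(v_n,w_n)\to 0$ while $|b_{v_n}(q_n)-b_{w_n}(q_n)|\ge\e_0$ for all $n$. Since $A$ is a closed subset of the compact sphere $S_pX$, after passing to a subsequence $v_n\to v\in A$, and then $\angle_p(v_n,w_n)\to 0$ forces $w_n\to v$ in $S_pX$ as well. Since $\overline X$ is compact, a further subsequence of $(q_n)$ converges in $\overline X$, either to some $q\in X$ or to some $\xi\in\pX$. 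If $q_n\to q\in X$, the joint continuity of $(u,y)\mapsto b_u(y)$ on $SX\times X$ (Lemma \ref{continuous}) gives $b_{v_n}(q_n)\to b_v(q)$ and $b_{w_n}(q_n)\to b_v(q)$, hence $|b_{v_n}(q_n)-b_{w_n}(q_n)|\to 0$, a contradiction. So we may assume $q_n\to\xi\in\pX$, and this is the case that carries all the content.

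Here $\xi\in B^\infty$ by definition, while $\xi\notin A^+$ by hypothesis, so in particular $v^+\ne\xi$. More quantitatively, $A^+$ and $B^\infty$ are disjoint compact subsets of $\pX$ for the angle metric $\angle_p$, which induces the visual topology; hence they are uniformly separated, and there is $\alpha_0>0$ (depending only on $A$ and $B$) with $\angle_p(a,\eta)\ge 2\alpha_0$ for all $a\in A^+$ and $\eta\in B^\infty$. Writing $u_n\in S_pX$ for the initial velocity of the segment $[p,q_n]$, we have $u_n\to u:=\dot c_{p,\xi}(0)$, and therefore $\angle_p(v_n,u_n)\ge\alpha_0$ and $\angle_p(w_n,u_n)\ge\alpha_0$ for all large $n$. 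It now suffices to show that this uniform angular separation forces $b_{v_n}(q_n)-b_{w_n}(q_n)\to 0$, contradicting the choice of the sequences.

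For this last step I would write $b_{v_n}(q_n)-b_{w_n}(q_n)=b_{v_n^+}(q_n,p)-b_{w_n^+}(q_n,p)$ and control the difference by approximating each term by a finite-time truncation $d(q_n,c_{v_n}(T))-T$ at a scale $T=T_n$ chosen in terms of $d(p,q_n)$ and $\alpha_0$, using the cocycle relation $b_\eta(q_n,p)=b_\eta(q_n,c_{p,\eta}(T))+b_\eta(c_{p,\eta}(T),p)=b_\eta(q_n,c_{p,\eta}(T))-T$ together with the bounds $0\le d(z,y)-b_\eta(z,y)$ and $|b_\eta(z,y)|\le d(z,y)$ recorded above. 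The content is that, under the uniform angular separation $\angle_p(v_n,u_n)\ge\alpha_0$, the two nearby Busemann functions $b_{v_n^+}$ and $b_{w_n^+}$ stay uniformly close throughout the region through which $q_n$ escapes toward $\xi$ --- a ``bounded difference of nearby Busemann functions'' estimate, which is a coarse hyperbolicity phenomenon and fails in a flat plane; this is exactly where the rank one geometry of $X$ (which forces divergence of the geodesic rays issuing from $p$ toward distinct boundary points) is indispensable, and it lets one bound the growth of the comparison angle between $[p,q_n]$ and $c_{v_n}$ well enough to make the truncation errors vanish uniformly in $n$. (An alternative route would be to upgrade Lemma \ref{continuous} to a continuity statement for the ``Busemann difference'' near the boundary, i.e.\ continuity of $(v,w,\eta)\mapsto\lim_{q\to\eta}\big(b_v(q)-b_w(q)\big)$ on the locus where $v^+\ne\eta\ne w^+$, a quantity that vanishes when $v=w$.) I expect this geometric estimate in the rank one cover $X$ --- relating the separation $\alpha_0$ to the closeness of nearby Busemann functions far out toward $\xi$ --- to be the main obstacle; the contradiction set-up, the reduction via compactness, and the bounded case are routine once Lemma \ref{continuous} and the elementary properties of Busemann functions are in hand.
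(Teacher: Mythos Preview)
The paper does not supply a proof of this lemma; it simply cites \cite[Lemma~2.9]{Wu}. There is thus nothing here to compare against directly, and I can only assess your proposal on its own merits.

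Your contradiction/compactness reduction is sound, and the bounded case $q_n\to q\in X$ is handled immediately by Lemma~\ref{continuous}. You are also right that the unbounded case $q_n\to\xi\in\partial X$ is where the content lies, and your flat-plane remark is to the point: with $p=0$, $v_n=(1,0)$, $w_n=(\cos(1/n),\sin(1/n))$ and $q_n=(0,n)$ one has $A^+$ and $B^\infty$ disjoint in $\partial\mathbb R^2$ yet $|b_{v_n}(q_n)-b_{w_n}(q_n)|\to 1$. So the statement genuinely fails in flat space, and something beyond bare CAT(0) comparison is required.

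The gap is that you do not actually supply that something. The paragraph beginning ``For this last step\ldots'' is a description of hoped-for estimates, not an argument: you propose to truncate at a time $T_n$ and invoke ``rank one geometry,'' but you give no mechanism for bounding the truncation error uniformly in $n$, and no concrete lemma converting the angular separation $\alpha_0$ into a uniform bound on $|b_{v_n}(q_n)-b_{w_n}(q_n)|$. Bear in mind also that ``rank one'' is a global hypothesis on $M$ (there exists \emph{some} rank-one vector) and does not by itself exclude flat strips in $X$; an unqualified appeal to divergence of geodesic rays from $p$ is therefore not enough. The argument in \cite{Wu} must use more structure---either cocompactness of the $\Gamma$-action, or the fact that in the only application here (Corollary~\ref{equicon}) the sets $A^+$ and $B^\infty$ lie near $v_0^-$ and $v_0^+$ for a \emph{regular} $v_0$, so that $d_T(A^+,B^\infty)>\pi$ and the connecting geodesics are rank one by Proposition~\ref{crucial}. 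As written, your proposal halts precisely at the step you yourself flag as ``the main obstacle,'' so it is not yet a proof.
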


\subsection{Patterson-Sullivan measure and Knieper measure}
We will recall the construction of the Patterson-Sullivan measure and the Knieper measure, which are the main tools to the subsequent proofs.
\begin{definition}\label{density}
Let $X$ be a simply connected manifold of nonpositive curvature and
$\Gamma$ a discrete subgroup of $\text{Iso}(X)$, the group of isometries of $X$. For a given constant $r>0$, a family of finite Borel measures $\{\mu_{p}\}_{p\in X}$
on $\pX$ is called an $r$-dimensional \emph{Busemann density} if
\begin{enumerate}
\item for any $p,\ q \in X$ and $\mu_{p}$-a.e. $\xi\in \pX$,
$$\frac{d\mu_{q}}{d \mu_{p}}(\xi)=e^{-r \cdot b_{\xi}(q,p)}$$
 where $b_{\xi}(q,p)$ is the Busemann function;
\item $\{\mu_{p}\}_{p\in X}$ is $\Gamma$-equivariant, i.e., for all Borel sets $A \subset \pX$ and for any $\c \in \Gamma$,
we have $$\mu_{\c p}(\c A) = \mu_{p}(A).$$
\end{enumerate}
\end{definition}
The construction of such a Busemann density is due to \cite{Kn1} via Patterson-Sullivan construction. Moreover, it is showed in \cite[Theorem 4.10]{Kn1} that up to a multiplicative constant, the Busemann density is unique, i.e., the Patterson-Sullivan measure is the unique Busemann density.

The following Shadowing Lemma is one of the most crucial properties of the Patterson-Sullivan measure.
\begin{lemma} (Shadowing Lemma, cf. \cite[Chapter 5, Proposition 2.1]{Kn3})\label{shadow}
Let $\{\mu_{p}\}_{p\in X}$  be the Patterson-Sullivan measure, which is the unique Busemann density with dimension $h$. Then there exists $R>0$ such that for any $\rho\ge R$ and any $p,x\in X$ there is $b=b(\rho)$ with
\begin{equation*}
\begin{aligned}
b^{-1}e^{-hd(p,x)}\le \mu_{p}(\bar f_p(B(x,\rho)))\le be^{-hd(p,x)}
\end{aligned}
\end{equation*}
where $\bar f_p(y):=c_{p,y}(+\infty)$ for any $y\in B(x,\rho)$.
\end{lemma}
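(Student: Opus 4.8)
The plan is to prove the Shadowing Lemma by the standard two‑sided estimate, comparing the $\mu_p$‑mass of the visual image $f_p(B(x,\rho))$ with $e^{-hd(p,x)}$ via the defining properties of the Busemann density. First I would reduce to the case $p$ fixed (say $p=x_0$ a basepoint) using $\Gamma$‑equivariance and a compactness argument over a fundamental domain, so that it suffices to find $b$ working for $p$ in a compact set and all $x\in X$; but in fact it is cleaner to argue directly for arbitrary $p,x$. The key geometric input is the following: if $d(p,x)=:\ell$ and $\rho\ge R$, then every geodesic ray from $p$ that enters $B(x,\rho)$ has, at the parameter value $\ell$, moved to within a bounded distance (depending only on $\rho$, via nonpositive curvature and the convexity/comparison estimates recalled in Section 2.1) of $x$; conversely a definite ``cone'' of directions at $p$ does hit $B(x,\rho)$. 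This will let me sandwich $f_p(B(x,\rho))$ between two shadows $f_p(B(y,\rho_1))$ and $f_p(B(y',\rho_2))$ whose masses I can estimate.

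The heart of the estimate is to control $b_\xi(q,p)$ for $\xi\in f_p(B(x,\rho))$ and $q$ on the geodesic from $p$ to $x$. For such $\xi$, the ray $c_{p,\xi}$ passes within $O(\rho)$ of $x$, and hence for a point $q=c_{p,x}(\ell)$ (or $q$ slightly past $x$) one gets $|b_\xi(q,p)+\ell|\le \text{const}(\rho)$ by the definition $b_\xi(q,p)=\lim_t(d(q,c_{p,\xi}(t))-t)$ together with the triangle inequality and convexity of $t\mapsto d(q,c_{p,\xi}(t))$. Plugging this into property (1) of Definition \ref{density},
\[
\mu_p\big(f_p(B(x,\rho))\big)=\int_{f_p(B(x,\rho))}e^{\,h\,b_\xi(q,p)}\,d\mu_q(\xi)
\le e^{-h\ell}\,e^{h\cdot\text{const}(\rho)}\,\mu_q\big(f_p(B(x,\rho))\big),
\]
and since $\mu_q(\pX)\le \|\mu\|$ is uniformly bounded (the total masses $q\mapsto\mu_q(\pX)$ are continuous and $\Gamma$‑invariant, hence bounded), the upper bound follows with $b=\|\mu\|\,e^{h\cdot\text{const}(\rho)}$. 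For the lower bound I would instead integrate over a slightly smaller shadow $f_p(B(x,\rho_0))$ with $R\le\rho_0<\rho$, use the same Busemann estimate to get $e^{\,h\,b_\xi(q,p)}\ge e^{-h\ell}e^{-h\cdot\text{const}(\rho)}$ on it, and then bound $\mu_q(f_p(B(x,\rho_0)))$ from below uniformly in $x$: this is where one needs that the Patterson‑Sullivan measure gives positive mass to every nonempty open set, with a lower bound independent of the center, which in turn uses minimality of the $\Gamma$‑action on the (nontrivial) limit set $\pX$ together with equivariance.

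The main obstacle, and the place requiring genuine care rather than routine estimation, is precisely this uniform lower bound $\mu_q(f_q(B(x,\rho_0)))\ge \text{const}>0$ for all $x,q$: one must show the Busemann shadows of balls of a fixed large radius cannot become arbitrarily thin in measure as the center recedes to infinity. I expect to handle it by a contradiction/compactness argument: if $\mu_{q_n}(f_{q_n}(B(x_n,\rho_0)))\to 0$, translate by deck transformations so that $q_n$ stays in a fundamental domain, pass to a subsequence with $q_n\to q_\infty$, $x_n/(\text{direction to }x_n)\to\xi_\infty\in\pX$, and use continuity of Busemann functions (Lemma \ref{continuous}) together with the fact that $f_{q_\infty}(B(x_n,\rho_0))$ eventually contains a fixed neighborhood of $\xi_\infty$ in the visual topology; then weak‑$*$ control of $\mu_{q_n}$ forces a positive lower bound, a contradiction. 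Once these pieces are in place, choosing $R$ to be the threshold from the geometric cone estimate and absorbing all $\rho$‑dependent constants into $b=b(\rho)$ completes the proof.
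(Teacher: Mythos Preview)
The paper does not give its own proof of this lemma: it is stated with a direct citation to \cite[Chapter 5, Proposition 2.1]{Kn3} and used as a black box. So there is no in-paper argument to compare against; the relevant benchmark is Knieper's proof in the cited reference.

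Your outline is essentially the standard Sullivan--Knieper argument and matches what is done there. The upper bound is exactly right: take $q=x$, use the $1$-Lipschitz property of Busemann functions together with the fact that the ray $c_{p,\xi}$ passes within $2\rho$ of $x$ near time $\ell=d(p,x)$ to get $|b_\xi(x,p)+\ell|\le 2\rho$ on the shadow, then apply the conformal relation $d\mu_p=e^{h\,b_\xi(x,p)}d\mu_x$ and bound $\mu_x(\pX)$ uniformly via $\Gamma$-equivariance and compactness of $M$.

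For the lower bound you have correctly isolated the nontrivial point, but two remarks are in order. First, there is a slip in notation: you need a uniform lower bound for $\mu_x\big(f_p(B(x,\rho))\big)$ (the shadow \emph{from} $p$, measured \emph{at} $x$), not $\mu_q\big(f_q(B(x,\rho_0))\big)$. Second, the way this is actually obtained in Knieper's argument is not quite ``shadows contain a fixed visual neighborhood of $\xi_\infty$''; rather one observes that the \emph{complement} of $f_p(B(x,\rho))$, viewed from $x$, is contained in a visual cone of angle $\le C/\rho$ around the direction from $x$ toward $p$ (comparison with Euclidean triangles), and then one uses that the Patterson--Sullivan measures have no atoms. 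After translating $x$ into a fundamental domain and passing to a limit, a failure of the uniform lower bound would force all the $\mu_{x_\infty}$-mass into a single point of $\pX$, contradicting atomlessness. Your compactness sketch is morally the same, but phrasing it via the shrinking complement and the no-atoms property is what makes the choice of a threshold radius $R$ transparent and is how the cited proof proceeds.
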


Let $P:SX \to \pX \times \pX$ be the projection given by $P(v)=(v^-,v^+)$. Denote by $\mathcal{I}^{P}:=P(SX)=\{P(v)\mid v \in SX\}$ the subset of pairs in $\pX \times \pX$
which can be connected by a geodesic. Note that the connecting geodesic may not be unique and moreover, not every pair $\xi\neq \eta$ in $\pX$ can be connected by a geodesic.

Fix a point $p\in X$, we can define a $\Gamma$-invariant measure $\overline{\mu}$
on $\mathcal{I}^{P}$ by the following formula:
\begin{equation*}
d \overline{\mu}(\xi,\eta) := e^{h\cdot \beta_{p}(\xi,\eta)}d\mu_{p}(\xi) d\mu_{p}(\eta),
\end{equation*}
where $\beta_{p}(\xi,\eta):=-\{b_{\xi}(q, p)+b_{\eta}(q, p)\}$ is the Gromov product, and $q$ is any point on a geodesic $c$ connecting $\xi$ and $\eta$. By \cite[Lemma 2.1]{Kn2} (see also Proposition \ref{crucial} below), $\overline{\mu}(\mathcal{I}^{P})>0$. It is easy to see that the function $\beta_{p}(\xi,\eta)$ does not depend on the choice of $c$ and $q$.
In geometric language, the Gromov product $\beta_{p}(\xi,\eta)$ is the length of the part of a geodesic
$c$ between the horospheres $H_{\xi}(p)$ and $H_{\eta}(p)$.

Then $\overline{\mu}$ induces a $\phi^t$-invariant measure $m$ on $SX$ with
\begin{equation*}
m(A)=\int_{\mathcal{I}^{P}} \text{Vol}\{\pi(P^{-1}(\xi,\eta)\cap A)\}d \overline{\mu}(\xi,\eta),
\end{equation*}
for all Borel sets $A\subset SX$. Here $\pi : SX \rightarrow X$ is the standard projection map and
Vol is the induced volume form on $\pi(P^{-1}(\xi,\eta))$.
%By the definition of $P$, we know that if there is no geodesic connecting $\xi$ and $\eta$,
%then $P^{-1}(\xi,\eta)=\emptyset$.
If there are more than one geodesics connecting $\xi$ and $\eta$, then by the flat strip theorem, $\pi(P^{-1}(\xi,\eta))$ is exactly a $k$-flat submanifold connecting $\xi$ and $\eta$ for some $k\ge 2$,
which consists of all the geodesics connecting $\xi$ and $\eta$.
%Especially, when $k$=1, $P^{-1}(\xi,\eta)$ is exactly the rank 1 (thus unique) geodesic connecting $\xi$ and $\eta$.

For any Borel set $A\subset SX$ and $t\in \mathbb{R}$,
$\text{Vol}\{\pi(P^{-1}(\xi,\eta)\cap \phi^{t}A)\}=\text{Vol}\{\pi(P^{-1}(\xi,\eta)\cap A)\}$.
Therefore $m$ is $\phi^t$-invariant. Moreover, $\Gamma$-invariance of $\overline{\mu}$ leads to the $\Gamma$-invariance of $m$.
Thus $m$ induced a $\phi^t$-invariant measure $\lm$ on $SM$ which is determined by
\begin{equation*}
m(A)=\int_{SM} \#(\text{pr}^{-1}(\lv)\cap A)d\lm(\lv).
\end{equation*}
It is proved in \cite{Kn2} that $\lm$ is unique MME, which is called Knieper measure.
Furthermore, $\lm$ is proved to be mixing in \cite{Ba} and eventually Bernoulli in \cite{CT}.

\subsection{Local product flow boxes}
In this subsection, we fix a regular vector $v_0\in SX$.  Let $p:=\pi(v_0)$, which will be the reference point in the following discussions. %For simplicity, we will suppress $v_0$ and $p$ from the notation.
We also fix a scale $\e\in (0, \min\{\frac{1}{8}, \frac{\inj (M)}{4}\})$.

The \emph{Hopf map} $H: SX\to \pX\times \pX\times \RR$ for $p\in X$ is defined as
$$H(v):=(v^-, v^+, s(v)), \text{\ where\ }s(v):=b_{v^-}(\pi v, p).$$
From definition, we see
$s(\phi^t v)=s(v)+t$
for any $v\in SX$ and $t\in \RR$. $s$ is continuous by Lemma \ref{continuous}.

Following \cite{CKW2} and \cite{Wu}, we define for each $\theta>0$ and $0<\a<\frac{3}{2}\e$,
\begin{equation*}
\begin{aligned}
&\bP_\theta:=\{w^-: w\in S_pX \text{\ and\ }\angle_p(w, v_0)\le \theta\},\\
&\bF_\theta:=\{w^+: w\in S_pX \text{\ and\ }\angle_p(w, v_0)\le \theta\},\\
&B_\theta^\a:=H^{-1}(\bP_\theta\times \bF_\theta\times [-\a,\a]).
\end{aligned}
\end{equation*}
$B_\theta^\a$ is called a \emph{flow box} with depth $\a$. We will consider $\theta>0$ small enough, which will be specified in the following.

The following lemma was crucial in constructing Knieper measure.
\begin{proposition}\label{crucial}(\cite[Lemma 2.1]{Kn2})
Let $X$ be a simply connected manifold of nonpositive curvature and $v_0\in SX$ is regular. Then for any $\e>0$, there is an $\theta_1 > 0$ such that, for any $\xi \in \bP_{\theta_1}$ and $\eta \in \bF_{\theta_1}$,
there is a unique geodesic $c_{\xi,\eta}$ connecting $\xi$ and $\eta$, i.e.,
$c_{\xi,\eta}(-\infty)=\xi$ and $c_{\xi,\eta}(+\infty)=\eta$.

Moreover, the geodesic $c_{\xi,\eta}$ is regular and $d(c_{v}(0), c_{\xi,\eta})<\epsilon/10$.
\end{proposition}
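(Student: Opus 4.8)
The plan is to deduce all three assertions of the proposition from one fact recalled above --- a vector $w\in SX$ is regular if and only if $d_T(w^-,w^+)>\pi$ --- together with the flat strip theorem. Throughout write $p:=\pi(v_0)$, so that $c_v(0)$ in the statement is $c_{v_0}(0)=p$.

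I would begin by choosing the angular parameter. Since $v_0$ is regular, $d_T(v_0^-,v_0^+)>\pi$; as the Tits metric is lower semicontinuous for the cone topology (\cite{BGS}) and $v_0^-\in\bP_\theta$, $v_0^+\in\bF_\theta$ for every $\theta$, there are $\theta_1>0$ and $\a_0>0$ with $d_T(\xi,\eta)\ge\pi+2\a_0$ for all $\xi\in\bP_{\theta_1}$, $\eta\in\bF_{\theta_1}$. For such a pair, the classical criterion for joining two points of $\pX$ at Tits distance $>\pi$ (\cite{BGS}) provides a connecting geodesic: if $x_k\to\xi$, $y_k\to\eta$ in $\overline X$, the segments $[x_k,y_k]$ remain in a bounded region of $X$ and a subsequence converges, uniformly on compacta, to a geodesic line $c_{\xi,\eta}$ with the prescribed endpoints at infinity. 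Since $d_T$ is bounded below on the compact set $\bP_{\theta_1}\times\bF_{\theta_1}$, this bounded region can moreover be chosen uniformly in $(\xi,\eta)$; this uniform a priori confinement is the engine of the whole argument.

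The remaining three points are then formal. Uniqueness: if $c,c'$ both join $\xi$ to $\eta$, then $t\mapsto d(c(t),c'(\RR))$ is convex and bounded in both directions, hence bounded, so $c,c'$ are bi-asymptotic and by the flat strip theorem bound a flat strip; but the Tits boundary of a flat strip is a pair of points at distance $\pi$, forcing $d_T(\xi,\eta)\le\pi$ and contradicting $d_T(\xi,\eta)\ge\pi+2\a_0$. Regularity: any unit vector $w$ tangent to $c_{\xi,\eta}$ has $\{w^-,w^+\}=\{\xi,\eta\}$, so $d_T(w^-,w^+)>\pi$ and $w$ is regular. Closeness to $p$: I would argue by contradiction. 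If $d(p,c_{\xi,\eta})<\e/10$ failed for every $\theta_1$, choose $\xi_m\in\bP_{1/m}$, $\eta_m\in\bF_{1/m}$ with $d(p,c_{\xi_m,\eta_m})\ge\e/10$; since $\bP_{1/m}$ and $\bF_{1/m}$ shrink in the cone topology to $\{v_0^-\}$ and $\{v_0^+\}$, we get $\xi_m\to v_0^-$, $\eta_m\to v_0^+$. By the uniform confinement all $c_{\xi_m,\eta_m}$ lie in a single compact set, so after passing to a subsequence they converge to a geodesic line $c_*$ joining $v_0^-$ to $v_0^+$ with $d(p,c_*)\ge\e/10>0$. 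As $c_{v_0}$ also joins $v_0^-$ to $v_0^+$ and passes through $p$, we have $c_*\ne c_{v_0}$, and the flat strip theorem yields a flat strip bounded by them, hence a nontrivial parallel Jacobi field along $c_{v_0}$ and $\text{rank}(v_0)\ge 2$ --- contradicting regularity. (The same argument in fact gives $d(p,c_{\xi,\eta})\to 0$ as $\theta_1\to 0$, so the constant $\e/10$ is harmless.)

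The only genuinely nontrivial ingredient, used repeatedly above, is the uniform a priori confinement: that $d_T(\xi,\eta)>\pi$ forces the approximating segments $[x_k,y_k]$ into a bounded region, uniformly over $(\xi,\eta)\in\bP_{\theta_1}\times\bF_{\theta_1}$. This is exactly the place where the regularity of $v_0$ is exploited, and it cannot be obtained from naive CAT$(0)$ comparison: a geodesic segment lying far from $p$ may subtend an angle at $p$ arbitrarily close to $\pi$, so the elementary comparison inequalities place no bound on how far the segment can drift; one must use the finer Tits-angle geometry from \cite{BGS}. Given that input, the rest is a short and clean application of the flat strip theorem.
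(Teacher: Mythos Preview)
The paper does not give its own proof of this proposition; it is quoted verbatim from Knieper \cite{Kn2} (and the same statement appears as Lemma III.3.1 in Ballmann \cite{Bal}). Your argument is correct and is essentially the standard one: lower semicontinuity of $d_T$ in the cone topology gives a uniform lower bound $d_T\ge\pi+2\a_0$ on $\bP_{\theta_1}\times\bF_{\theta_1}$, existence and uniqueness then follow from the Tits-metric machinery and the flat strip theorem, and closeness to $p$ comes from a compactness/limit argument.

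Two small points of polish. First, in the uniqueness step the phrase ``the Tits boundary of a flat strip is a pair of points at distance~$\pi$'' is a slight abuse; the clean statement is that any geodesic bounding a flat strip carries a nontrivial parallel Jacobi field (the normal direction), hence has rank $\ge 2$, hence is singular, hence its endpoints satisfy $d_T\le\pi$ by the very characterization you invoked --- contradicting $d_T\ge\pi+2\a_0$. Second, your ``uniform a priori confinement'' deserves one more sentence: the precise input from \cite{BGS} is that if geodesic segments $\sigma_m$ have $d(p,\sigma_m)\to\infty$ with endpoints converging in $\overline X$ to $\xi,\eta$, then $d_T(\xi,\eta)\le\pi$. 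Combining this contrapositive with lower semicontinuity of $d_T$ and compactness of $\bP_{\theta_1}\times\bF_{\theta_1}$ in a diagonal argument yields the uniform bound you assert; you correctly flag this as the only nontrivial step and cite the right source, but making the contrapositive explicit would sharpen the write-up.
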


Based on Proposition \ref{crucial}, we have the following result.
\begin{lemma}\label{diameter}(\cite[Lemma 2.13]{Wu})
Let $v_0, p, \e$ be as above and $\theta_1$ be given in Proposition \ref{crucial}. Then for any $0<\theta<\theta_1$,
\begin{enumerate}
  \item $\text{diam\ } \pi H^{-1}(\bP_\theta\times \bF_\theta\times \{0\})<\frac{\e}{2}$;
  \item $H^{-1}(\bP_\theta\times \bF_\theta\times \{0\})\subset SX$ is compact;
  \item $\text{diam\ } \pi B_\theta^\a <4\e$ for any $0<\a\le \frac{3\e}{2}$.
\end{enumerate}
\end{lemma}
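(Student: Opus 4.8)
The plan is to deduce all three diameter/compactness claims from Proposition \ref{crucial}, which controls how far the connecting geodesics $c_{\xi,\eta}$ stray from $c_{v_0}(0)=p$, together with the convexity of the distance function in nonpositive curvature and the continuity of the Hopf coordinates. First I would treat (1). A point $v\in H^{-1}(\bP_\theta\times\bF_\theta\times\{0\})$ satisfies $v^-\in\bP_\theta$, $v^+\in\bF_\theta$ and $s(v)=b_{v^-}(\pi v,p)=0$; by uniqueness of the connecting geodesic (Proposition \ref{crucial}, applicable since $\theta<\theta_1$) the geodesic $c_v$ coincides with $c_{v^-,v^+}$, hence $d(\pi v, c_{v^-,v^+})$ is not just small but one has $d(\pi v, p)$ controlled. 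Concretely, Proposition \ref{crucial} gives $d(c_{v_0}(0),c_{\xi,\eta})<\ep/10$ for every such pair, so there is a point $q_{\xi,\eta}$ on $c_{\xi,\eta}$ with $d(p,q_{\xi,\eta})<\ep/10$; the normalization $s(v)=0$ forces $\pi v$ to lie on the horosphere $H_{v^-}(p)$ through $p$ based at $v^-$, and a short Busemann-function estimate (using $|b_\xi(\cdot,p)|\le d(\cdot,p)$ and the triangle inequality along $c_{\xi,\eta}$) pins $\pi v$ within a definite multiple of $\ep/10$ of $p$. Taking two such vectors $v,w$ and adding the two bounds gives $\operatorname{diam}\pi H^{-1}(\bP_\theta\times\bF_\theta\times\{0\})<\ep/2$.

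For (2), compactness: the sets $\bP_\theta$ and $\bF_\theta$ are closed subsets of the compact space $\partial X$ (they are the $f_p$-images of the closed angular caps $\{w\in S_pX:\angle_p(w,v_0)\le\theta\}$, which are compact, and $f_p$ is continuous), and $\{0\}$ is trivially compact, so $\bP_\theta\times\bF_\theta\times\{0\}$ is compact. The map $H$ is continuous (its components $v\mapsto v^\pm$ are continuous for the visual topology and $s$ is continuous by Lemma \ref{continuous}), and by Proposition \ref{crucial} $H$ restricted to the relevant region is injective with continuous inverse — indeed $H^{-1}(\xi,\eta,0)=\dot c_{\xi,\eta}(t)$ for the unique $t$ with $b_\xi(c_{\xi,\eta}(t),p)=0$, and this depends continuously on $(\xi,\eta)$ by continuity of Busemann functions and of the geodesic connecting a pair of boundary points in this regular regime. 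Hence $H^{-1}(\bP_\theta\times\bF_\theta\times\{0\})$ is the continuous image of a compact set, therefore compact. (Alternatively, it is a closed subset of the compact set $\pi^{-1}(\overline{B(p,\ep/2)})\subset SX$ by part (1), which also suffices.)

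For (3), I would bound $\operatorname{diam}\pi B_\theta^\a$ by flowing. Every $v\in B_\theta^\a=H^{-1}(\bP_\theta\times\bF_\theta\times[-\a,\a])$ has $s(v)=t_v\in[-\a,\a]$, and since $s(\phi^{-t_v}v)=0$, the vector $\phi^{-t_v}v$ lies in $H^{-1}(\bP_\theta\times\bF_\theta\times\{0\})$; by (1), $d(\pi\phi^{-t_v}v,p)<\ep/4$. Then $d(\pi v,p)\le d(\pi v,\pi\phi^{-t_v}v)+d(\pi\phi^{-t_v}v,p)\le |t_v|+\ep/4\le \frac{3\ep}{2}+\frac{\ep}{4}<2\ep$, using $\a\le\frac{3\ep}{2}$ and that flowing for time $|t_v|$ moves the basepoint by exactly $|t_v|$ along a geodesic. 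Taking two vectors in $B_\theta^\a$ and applying the triangle inequality at $p$ gives $\operatorname{diam}\pi B_\theta^\a<4\ep$. The main obstacle is the quantitative Busemann estimate in step (1): making precise that $s(v)=0$ together with $d(p,c_{v^-,v^+})<\ep/10$ forces $d(\pi v,p)$ to be an explicit small multiple of $\ep$ requires a careful triangle-inequality argument comparing $b_{v^-}(\pi v,p)$ with distances along $c_{v^-,v^+}$; once that constant is pinned down below $\ep/4$, everything else is bookkeeping with convexity, continuity, and compactness. I expect this is exactly the estimate carried out in the cited \cite[Lemma 2.13]{Wu}.
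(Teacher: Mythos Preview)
The paper does not prove this lemma; it simply cites it from \cite[Lemma 2.13]{Wu}, so there is no in-paper argument to compare against. Your outline is correct and is the natural proof. The one step you leave slightly vague---the Busemann estimate in (1)---can be made precise as follows: if $q$ is the point on $c_{v^-,v^+}$ with $d(p,q)<\e/10$ given by Proposition \ref{crucial}, then $|b_{v^-}(q,p)|\le d(q,p)<\e/10$, and since $\pi v$ and $q$ both lie on $c_{v^-,v^+}$ with $b_{v^-}(\pi v,p)=0$, the distance $d(\pi v,q)$ equals $|b_{v^-}(\pi v,p)-b_{v^-}(q,p)|<\e/10$; hence $d(\pi v,p)<\e/5<\e/4$, which gives the diameter bound $\e/2$ and feeds directly into your argument for (3). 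Everything else is exactly as you describe.
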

\begin{comment}
At last, we also assume that the choice of $\theta\in (0, \theta_1)$ satisfies the following properties. Since $\theta\mapsto \bar\mu(\bP_\theta\times \bF_\theta)$ is nondecreasing and hence has at most countably many discontinuities, we always choose $\theta$ to be the continuity point of this function, i.e.,
$$\lim_{\rho\to \theta}\bar\mu(\bP_\rho\times \bF_\rho)=\bar\mu(\bP_\theta\times \bF_\theta).$$
Furthermore, $\theta$ is chosen so that $\bar\mu(\partial\bP_\theta\times \partial\bF_\theta)=0.$ By the product structure of $B$ and $S$ and the definition of $m$, we have for any $\a\in (0,\frac{3\e}{2})$,
\begin{equation}\label{e:choice}
\begin{aligned}
\lim_{\rho\to \theta} m(S_\rho)=m(S_\theta), \quad \lim_{\rho\to \theta} m(B^\a_\rho)=m(B^\a_\theta),
\text{\ \ and\ \ }m(\partial B^\a_\theta)=0.
\end{aligned}
\end{equation}
\end{comment}
The following is a direct corollary of Lemma \ref{equicon1}.
\begin{corollary}\label{equicon}
Given $v_0, p, \e>0$ as above, there exists $\theta_2>0$ such that for any $0<\theta<\theta_2$, if $\xi,\eta\in \bP_\theta$ and any $q$ lying within $\diam \F+4\e$ of $\pi H^{-1}(\bP_\theta\times \bF_\theta\times [0,\infty))$, we have $|b_\xi(q,p)-b_\eta(q,p)|<\e^2$. Similar result holds if the roles of $\bP_\theta$ and $\bF_\theta$ are reversed.
\end{corollary}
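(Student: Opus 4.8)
The plan is to apply Lemma \ref{equicon1} with a single, $\theta$-independent choice of the closed set $A\subset S_pX$ and the index set $B\subset X$, and then to read off the uniform $\theta_2$ from the $\d$ that Lemma \ref{equicon1} produces. Fix once and for all $\theta_0\in(0,\theta_1)$ with $2\theta_0<\pi$. Since $v_0$ is regular, $v_0\ne -v_0$, so the compact sets $\bP_{\theta_0},\bF_{\theta_0}\subset\pX$ are disjoint: any $\xi\in\bP_{\theta_0}$ and $\eta\in\bF_{\theta_0}$ have the form $v^+,w^+$ with $\angle_p(v,-v_0)\le\theta_0$ and $\angle_p(w,v_0)\le\theta_0$, hence $\angle_p(v,w)\ge\pi-2\theta_0>0$. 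Put $A:=\{v\in S_pX:\angle_p(v,-v_0)\le\theta_0\}$, a closed set with $A^+=\bP_{\theta_0}$, and let $B$ be the closed $(\diam\F+4\e)$-neighbourhood in $X$ of $\pi H^{-1}(\bP_{\theta_0}\times\bF_{\theta_0}\times[0,\infty))$.

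The crux is to verify the hypothesis $A^+\cap B^\infty=\emptyset$ of Lemma \ref{equicon1}; I will show the stronger statement $B^\infty\subseteq\bF_{\theta_0}$. Let $q_n\in B$ with $q_n\to\zeta\in\pX$. Then $q_n$ lies within a bounded distance of some $\pi v_n$ with $v_n^-=\xi_n\in\bP_{\theta_0}$, $v_n^+=\eta_n\in\bF_{\theta_0}$, $s(v_n)\ge 0$, and $d(p,\pi v_n)\to\infty$. By Proposition \ref{crucial} the geodesic $c_{\xi_n,\eta_n}$ is unique and passes within $\e/10$ of $p$; parametrise it so that $d(p,\pi u_n)<\e/10$ for $u_n:=\dot c_{\xi_n,\eta_n}(0)$. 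Then $v_n=\phi^{t_n}u_n$ for some $t_n$, and since $s(\phi^tu)=s(u)+t$ and $|s(u_n)|\le d(p,\pi u_n)<\e/10$, the condition $s(v_n)\ge0$ forces $t_n\ge-\e/10$; together with $d(p,\pi v_n)\to\infty$ this gives $t_n\to+\infty$. The $u_n$ lie in a fixed compact subset of $SX$, so after passing to a subsequence $u_n\to u$, hence $\eta_n=u_n^+\to u^+\in\bF_{\theta_0}$, and, because $t_n\to+\infty$, continuity of the geodesic flow in the cone topology on $\overline X$ yields $\pi v_n=c_{u_n}(t_n)\to u^+$; since $q_n$ stays a bounded distance from $\pi v_n$ we get $\zeta=u^+\in\bF_{\theta_0}$, as claimed.

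Now apply Lemma \ref{equicon1} to this $A$ and $B$ with target $\e^2$: there is $\d>0$ so that $\angle_p(v,w)<\d$ implies $|b_v(q)-b_w(q)|<\e^2$ for every $q\in B$. Set $\theta_2:=\min\{\theta_0,\d/2\}$. If $0<\theta<\theta_2$ and $\xi=v^+,\eta=w^+\in\bP_\theta$, then $v,w\in A$ and $\angle_p(v,w)\le\angle_p(v,-v_0)+\angle_p(-v_0,w)\le 2\theta<\d$; moreover any $q$ within $\diam\F+4\e$ of $\pi H^{-1}(\bP_\theta\times\bF_\theta\times[0,\infty))$ lies in $B$, since $\bP_\theta\subseteq\bP_{\theta_0}$ and $\bF_\theta\subseteq\bF_{\theta_0}$. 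Hence $|b_\xi(q,p)-b_\eta(q,p)|=|b_v(q)-b_w(q)|<\e^2$. The reversed statement follows by the symmetric argument, using $A':=\{v:\angle_p(v,v_0)\le\theta_0\}$ (so $A'^+=\bF_{\theta_0}$) together with the $(\diam\F+4\e)$-neighbourhood of $\pi H^{-1}(\bP_{\theta_0}\times\bF_{\theta_0}\times(-\infty,0])$, where now $s(v)\le0$ forces $t_n\to-\infty$, so the accumulation set at infinity lies in $\bP_{\theta_0}$, again disjoint from $A'^+=\bF_{\theta_0}$.

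The only step that is not pure bookkeeping is the middle paragraph, i.e.\ showing $B^\infty\subseteq\bF_{\theta_0}$: Proposition \ref{crucial} confines the connecting geodesics to a compact family near $p$, and the sign condition on $s$ decides on which side of $p$ their far points accumulate. Within that argument the one classical input is that $c_{u_n}(t_n)\to u^+$ whenever $u_n\to u$ in $SX$ and $t_n\to+\infty$ (and the continuity of $v\mapsto v^+$), which is a standard property of the cone topology on $\overline X$; everything else is formal.
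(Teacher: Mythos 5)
Your proposal is correct and follows the route the paper intends, namely deducing the corollary from Lemma \ref{equicon1}. The paper itself labels this a ``direct corollary'' and supplies no argument, so the value you add is the verification of the one hypothesis of Lemma \ref{equicon1} that is not immediate: that the closure at infinity $B^\infty$ of the $(\diam\F+4\e)$-neighbourhood of $\pi H^{-1}(\bP_{\theta_0}\times\bF_{\theta_0}\times[0,\infty))$ is contained in $\bF_{\theta_0}$, hence disjoint from $A^+=\bP_{\theta_0}$. Your middle paragraph does this cleanly, using Proposition \ref{crucial} to confine the connecting geodesics to a compact family near $p$, the Busemann bound $|s(u)|\le d(\pi u,p)$ together with $s(v_n)\ge 0$ to force $t_n\to+\infty$, and the standard cone-topology fact that $c_{u_n}(t_n)\to u^+$ when $u_n\to u$ on a compact set and $t_n\to+\infty$. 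The identification $A^+=\bP_{\theta_0}$ via $\angle_p(-u,v_0)=\angle_p(u,-v_0)$ and the uniform choice $\theta_2=\min\{\theta_0,\delta/2\}$ are both right, as is the symmetric treatment of the reversed case with $(-\infty,0]$. No gaps.
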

Let $\theta_0:=\min\{\theta_1, \theta_2\}$, where $\theta_1$ is given in Lemma \ref{diameter}, and $\theta_2$ is given in Corollary \ref{equicon}. In the following, we always suppose that $0<\theta<\theta_0$.

\subsection{Regular partition-cover}
Let us fix $x,y\in \F\subset X$, and $p=x$ the reference point. For each regular vector $w\in S_{x}X$, we can construct a local product flow box around $w$ as in the last subsection. More precisely, consider the interior of $B_\theta^\a(w)$, $\text{int} B_\theta^\a(w)$, which is an open neighborhood of $w$ for some $\a>0$ and $0<\theta<\theta_0$ (here $\theta_0$ depends on $w$). By second countability of $S_xX$, there exist countably many regular vectors $w_1, w_2, \cdots$ such that $S_xX\cap \Reg \subset \cup_{i=1}^\infty\text{int} B_{\theta_i}^\a(w_i)$.
Similarly, there exist countably many regular vectors $v_1, v_2, \cdots$ such that $S_yX\cap \Reg \subset \cup_{i=1}^\infty\text{int} B_{\theta'_i}^\a(v_i)$. We note that the reference point $p$ is always chosen to be $x$ in the construction of all above flow boxes.

A \emph{regular partition-cover} of $S_xX$ is a triple $(\{w_i\}, \{\text{int} B_{\theta_i}^\a(w_i)\}, \{N_i\})$ where $\{N_i\}$ is a disjoint partition of $\Reg\cap S_xX$ and such that $N_i\subset \text{int} B_{\theta_i}^\a(w_i)$ for each $i\in\NN$. Similarly a regular partition-cover of $S_yX$ is a triple $(\{v_i\}, \{\text{int} B_{\theta'_i}^\a(v_i)\}, \{V_i\})$ such that  $\{V_i\}$ is a disjoint partition of $\Reg\cap S_yX$ and $V_i\subset \text{int} B_{\theta'_i}^\a(v_i)$ for each $i\in\NN$.

Recall the bijection $f_x: S_xX\to \pX$ defined by $f_x(v)=v^+, v\in S_xX$. Let $\tilde \mu_x:=(f_x^{-1})_*\mu_p$ which is a finite Borel measure on $S_xX$. Similarly, let $\tilde \mu_y:=(f_y^{-1})_*\mu_p$ be a finite Borel measure supported on $S_yX$. The measures $\tilde \mu_x$ and $\tilde \mu_y$ will be used in Section 4.

The following result is essentially proved in \cite[Proposition 2.4]{Cl}.
\begin{lemma}\label{null}
For any $x\in X$, we have $\tilde \mu_x(\Reg\cap S_xX)=\tilde \mu_x(S_xX)$.
\end{lemma}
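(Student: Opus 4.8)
The statement asserts that the Patterson–Sullivan measure, transported to the unit sphere $S_xX$ via the visual bijection $f_x$, assigns zero mass to the singular directions $\Sing \cap S_xX$. The plan is to reduce this to a statement about the Knieper measure $m$ and to invoke the known fact that the set of singular vectors has zero Knieper measure. First I would recall that $\Sing$ is closed and $\phi^t$-invariant, and that by Knieper's work the singular set is a null set for the unique MME $\lm$ (this is exactly the input behind Lemmas \ref{singular} and \ref{nonuniform} referenced in the introduction); lifting to $SX$, the set $\widetilde{\Sing} := \pr^{-1}(\Sing)$ is $m$-null on any fundamental region, equivalently $m$ restricted to any flow box charges only regular vectors. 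Then I would unwind the definition of $m$ through the disintegration $d\overline\mu(\xi,\eta) = e^{h\beta_p(\xi,\eta)}\, d\mu_p(\xi)\, d\mu_p(\eta)$ together with the fibrewise volume, to see that the push-forward structure forces $\mu_p$-almost every $\eta$ (and $\xi$) to be the endpoint of a regular geodesic.

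More concretely, here is the mechanism I expect to use. Fix $x$ and set $p = x$. A direction $v \in S_xX$ is singular precisely when every geodesic (here the unique geodesic) from $x$ pointing at $v^+$ has rank $\ge 2$. Consider the set $E := \{\eta \in \pX : \text{some geodesic through } x \text{ to } \eta \text{ is singular}\}$; then $f_x^{-1}(E) \supseteq \Sing \cap S_xX$, and it suffices to show $\mu_p(E) = 0$. Cover $E$ (up to the boundary issues already handled for flow boxes) by countably many "unstable slices" of flow boxes: for each regular $w$ the set $\bF_\theta(w)$ has positive $\mu_p$-measure, and the regular partition–cover construction of the previous subsection shows $\Reg \cap S_xX$ is covered by countably many interiors $\text{int}\,B_{\theta_i}^\a(w_i)$. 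The complement in $\pX$ of $\bigcup_i \bF_{\theta_i}(w_i)$ is contained in $E$'s complement-obstruction set, i.e. corresponds to directions $\eta$ such that $(\xi,\eta)$ lies outside every regular flow box for all admissible $\xi$. By the definition of $m$ as an integral of fibre volumes against $\overline\mu$, and since $m$ gives full measure to $\bigcup_i \text{int}\, B_{\theta_i}^\a(w_i)$ (as these cover $\Reg$ and $m(\widetilde{\Sing}) = 0$), Fubini applied to $d\overline\mu = e^{h\beta_p}\, d\mu_p \otimes d\mu_p$ yields that $\mu_p$-a.e.\ $\eta$ is an endpoint appearing in some regular slice, which is exactly $\mu_p(E) = 0$. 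Translating back through $f_x$ gives $\tilde\mu_x(\Sing \cap S_xX) = 0$, i.e.\ $\tilde\mu_x(\Reg \cap S_xX) = \tilde\mu_x(S_xX)$.

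The main obstacle I anticipate is the bookkeeping in the Fubini step: the measure $\overline\mu$ lives on $\mathcal{I}^P \subsetneq \pX \times \pX$, and a pair $(\xi,\eta)$ may be joined by a flat strip rather than a single geodesic, so "the geodesic from $x$ to $\eta$ is singular" must be phrased carefully in terms of the whole fibre $\pi(P^{-1}(\xi,\eta))$ and its rank. One must also be sure that projecting the $m$-null set $\widetilde{\Sing}$ down to the $\eta$-coordinate genuinely produces a $\mu_p$-null set — this is where positivity of the Gromov-product weight $e^{h\beta_p(\xi,\eta)}$ and finiteness of $\mu_p$ on the relevant compact slices is used, so that $\overline\mu$-null really does disintegrate to $\mu_p$-null in each coordinate. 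Once that is set up, the argument is a direct appeal to Knieper's $\lm(\Sing) = 0$, and indeed this is exactly the line of reasoning carried out in \cite[Proposition 2.4]{Cl}, which I would cite for the technical details while reproducing the key steps.
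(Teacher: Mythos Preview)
Your Fubini strategy has a genuine gap. From $m(\Sing)=0$ you correctly obtain $\overline\mu(P(\Sing))=0$, and Fubini then says: for $\mu_p$-a.e.\ $\eta$, the section $\{\xi:(\xi,\eta)\in P(\Sing)\}$ is $\mu_p$-null. But the lemma asks about the geodesic through the \emph{fixed} point $x$: a vector $v\in S_xX$ is singular exactly when $(v^-,v^+)\in P(\Sing)$, and for $v\in S_xX$ the backward endpoint $v^-$ is the antipode $\iota_x(v^+)$ determined by $v^+$ and the basepoint $x$. So what you must show is $\mu_p\{\eta:(\iota_x(\eta),\eta)\in P(\Sing)\}=0$, i.e.\ that a \emph{specific} $\xi$ (namely $\iota_x(\eta)$) avoids $P(\Sing)$ for a.e.\ $\eta$. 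The Fubini conclusion ``for a.e.\ $\xi$'' says nothing about this particular graph slice, which is $\overline\mu$-null in the product. Your paragraph about flow boxes and ``$m$ gives full measure to $\bigcup_i \text{int}\,B_{\theta_i}^\a(w_i)$'' does not rescue this either: those boxes only form a neighborhood of $S_xX$ inside $SX$ and carry a tiny portion of $m$, not full measure.

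The paper (following \cite[Proposition~2.4]{Cl}, which you cite but do not actually reproduce) closes this gap with a dynamical argument rather than Fubini. One works with the $\Gamma$-invariant set $\Sing^+:=\{v^+:v\in\Sing\}\supset f_x(\Sing\cap S_xX)$ and shows it is $\mu_x$-null by exhibiting a full-measure set disjoint from it: if $v\in\Rec\cap\Reg$ then, by the contraction along the strong stable horosphere (\cite[Proposition~4.1]{Kn2}) together with recurrence of $v$, \emph{every} $w$ with $w^+=v^+$ is regular. Hence the set $R=\{v^+:v\in\Rec\cap\Reg\}$, which has full $\mu_x$-measure by the construction of $m$, is disjoint from $\Sing^+$. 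This is the missing idea: one needs a mechanism that upgrades ``some geodesic to $\eta$ is regular'' to ``every geodesic to $\eta$ is regular,'' and that mechanism is stable contraction plus recurrence, not a product-measure disintegration.
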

\begin{proof}
Define $\Sing^+:=\{v^+: v\in \Sing\}$. By \cite[Proposition 4.9]{Kn1}, $\C$ acts ergodically on $\partial X$. Since $\Sing^+$ is $\C$-invariant, we have that either $\mu_x(\Sing^+)=\mu_x(\pX)$ or $\mu_x(\Sing^+)=0$ for any $x\in X$.

Let $\Rec\subset SM$ be the subset of vectors recurrent under the geodesic flow. Then its lift to $SX$, which is also denoted by $\Rec$, has full $m$-measure. By \cite{Kn2}, $\Reg$ also has full $m$-measure, and thus $\Rec\cap \Reg$ has full $m$-measure. Define $R:=\{v^+: v\in \Rec\cap \Reg\}$. By definition of the Knieper measure, we see that $R$ has full $\mu_x$-measure.

Let $v\in \Rec\cap \Reg$. By \cite[Proposition 4.1]{Kn2}, for every
$$w\in W^s(v):=\{w\in SX: w=-\text{grad}\ b_\xi(q, \pi v), b_\xi(q, \pi v)=0\},$$
i.e., every vector $w$ on the strong stable horocycle manifold of $v$, we have $d(\phi^tv, \phi^tw)\to 0$ as $t\to +\infty$. Since $v$ is recurrent and regular, then $w$ is also regular. It follows that $R\cap \Sing^+=\emptyset$ and so $\mu_x(\Sing^+)=0$. The lemma then follows.
\end{proof}

From now on in Sections 3-4, we will first consider a pair of $V_i$ and $N_j$ from the regular partition-covers of $S_xX$ and $S_yX$ respectively. At last, we will sum up our estimates over countably many such pairs from  regular partition-covers.

\begin{comment}
Given $\lx,\ly\in M$, consider $\lN=S_{\lx}M, \lV=S_{\ly}M$ and their regular partition-covers, $(\lv_i, \text{int} B_\theta^\a(\lv_i), \lN_i)$ and $(\lv_i, \text{int} B_\theta^\a(\lv_i), \lV_i)$.

Now let $x,y$ be the lift of $\lx,\ly$ on $X$ such that $x,y\in \F$ where $\F$ is a fundamental domain of $X$.
Let $p\in \F$ be a reference point. We lift the above objects to $\F$, and get $N=S_{x}X, V=S_{y}X$ and regular partition-covers, $(v_i, \text{int} B_\theta^\a(v_i), N_i)$ and $(v_i, \text{int} B_\theta^\a(v_i), V_i)$.

Let $S\subset S_xX\times S_yX$ be the set of all pairs of vectors $(v,w)$ with $v\in S_xX$ and $w\in S_yX$ satisfying one of the following conditions:
\begin{enumerate}
  \item $v$ is singular;
  \item $w$ is singular;
  \item if $c_{w^-,v^+}$ is a geodesic connecting $w^-$ and $v^+$, then $c_{w^-,v^+}$ is not the boundary of a half plane.
\end{enumerate}
\begin{lemma}
We have $(\tilde \mu_x\times\tilde \mu_y)(S)=0$ for $(\pi_*m\times \pi_*m)\ae x,y \in X$.
\end{lemma}
\end{comment}

\section{Local uniform expansion}
In this section, we use $\pi$-convergence theorem to illustrate local uniform expansion along unstable horospheres. As a consequence, we obtain estimates on the cardinality of certain subgroups of $\C$.

Let $B_T(\xi,r)$ denote the open ball $\{\eta\in \pX: d_T(\eta,\xi)< r\}$ and $\overline{B}_T(\xi,r)$ the closed ball $\{\eta\in \pX: d_T(\eta,\xi)\le r\}$. Recall that $v\in SX$ is regular if and only if that $d_T(v^-,v^+)>\pi$.

\begin{theorem}($\pi$-convergence theorem, \cite[Lemma 18]{PS})\label{piconvergence}
Let $X$ be a simply connected manifold of nonpositive curvature and $\C$ a group acting by isometries on $X$. %$v_0, p, \e$ be fixed as in Section $2.4$, and $\theta_1$ be given in Proposition \ref{crucial}. Fix any $0<\rho<\theta<\theta_1$.
Let $x\in X$, $\c_i\in \Gamma$ and $\theta\in [0,\pi]$, and suppose that $\c_i(x)\to \xi\in \pX$ and $\c^{-1}_i (x)\to \eta \in \pX$ as $i\to \infty$. Then for any compact set $K\subset \pX\setminus \overline{B}_T(\eta,\theta)$, we have $\c_i(K)\to \overline{B}_T(\xi,\pi-\theta)$, in the sense that for any open set $U$ with $U\supset \overline{B}_T(\xi,\pi-\theta)$, $\c_i(K) \subset U$ for all $i$ sufficiently large.
\end{theorem}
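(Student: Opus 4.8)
The plan is to argue by contradiction, using compactness of $\pX$ in the cone topology to extract a limiting configuration, and then to reduce the whole statement to one geometric estimate: an angle measured from basepoints that escape to $n$ cannot exceed $\pi$ minus the Tits distance from $n$ to the ideal point one is observing. The two non-elementary ingredients I would invoke are the monotonicity of $s\mapsto\angle_{c(s)}\big(c(+\infty),\xi\big)$ along a geodesic ray $c$ (a consequence of the convexity of Busemann functions) and the standard fact that if the angular distance between two ideal points is $<\pi$ then it coincides with their Tits distance (cf.\ \cite{BGS}).

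First I would set up the contradiction. Suppose the conclusion fails: there is an open $U\supset\overline{B}_T(p,\pi-\theta)$ and, after passing to a subsequence, points $\xi_i\in K$ with $\c_i(\xi_i)\notin U$. Since $\pX$ is cone-compact and $K$ is closed, pass to a further subsequence so that $\xi_i\to\xi\in K$ and $\c_i(\xi_i)\to\zeta\in\pX\setminus\overline{B}_T(p,\pi-\theta)$, while still $\c_i(x)\to p$ and $\c_i^{-1}(x)\to n$. Then $d_T(p,\zeta)>\pi-\theta$, whereas $\xi\in K$ and $K\cap\overline{B}_T(n,\theta)=\emptyset$ give $d_T(n,\xi)>\theta\ge 0$, so in particular $\xi\neq n$. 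I would contradict the first inequality by proving $d_T(p,\zeta)\le\pi-\theta$; since $d_T\ge\angle$ and $\angle(a,b)<\pi$ forces $d_T(a,b)=\angle(a,b)$, it is enough to show $\angle_q(p,\zeta)\le\pi-\min\big(\pi,d_T(n,\xi)\big)$ for every $q\in X$: indeed $\min\big(\pi,d_T(n,\xi)\big)\ge\theta$ (as $d_T(n,\xi)>\theta$ and $\theta\le\pi$) and $\min\big(\pi,d_T(n,\xi)\big)>0$ (as $\xi\neq n$), so taking the supremum over $q$ would yield $\angle(p,\zeta)\le\pi-\theta<\pi$, whence $d_T(p,\zeta)=\angle(p,\zeta)\le\pi-\theta$.

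The core of the argument is this last estimate. Fix $q\in X$ and put $q_i:=\c_i^{-1}(q)$; since $d\big(q_i,\c_i^{-1}(x)\big)=d(q,x)$ is bounded and $\c_i^{-1}(x)\to n$, we get $q_i\to n$ in the cone topology. Applying the isometry $\c_i^{-1}$ yields $\angle_q\big(\c_i(x),\c_i(\xi_i)\big)=\angle_{q_i}(x,\xi_i)$, and as $\c_i(x)\to p$ and $\c_i(\xi_i)\to\zeta$, continuity of the angle at the fixed point $q$ (in the cone topology) gives $\angle_q(p,\zeta)=\lim_i\angle_{q_i}(x,\xi_i)$. Now let $c_i$ be the geodesic ray from $x$ through $q_i$, set $\mu_i:=c_i(+\infty)$ and $t_i:=d(x,q_i)$, so $q_i=c_i(t_i)$, $t_i\to\infty$, and $\mu_i\to n$; since at $q_i$ the direction toward $x$ is opposite to the direction toward $\mu_i$, we have $\angle_{q_i}(x,\xi_i)=\pi-\angle_{q_i}(\mu_i,\xi_i)$. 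Because $s\mapsto b_{\xi_i}(c_i(s),x)$ is convex, $s\mapsto\angle_{c_i(s)}(\mu_i,\xi_i)$ is non-decreasing, so for each fixed $s_0>0$ and all large $i$, $\angle_{q_i}(\mu_i,\xi_i)=\angle_{c_i(t_i)}(\mu_i,\xi_i)\ge\angle_{c_i(s_0)}(\mu_i,\xi_i)$. Letting $i\to\infty$ — the rays $c_i$ converge to $c_{x,n}$ together with their tangent vectors, and $\xi_i\to\xi$, so the relevant directions converge — gives $\liminf_i\angle_{q_i}(\mu_i,\xi_i)\ge\angle_{c_{x,n}(s_0)}(n,\xi)$; then letting $s_0\to\infty$ and using $\lim_{s\to\infty}\angle_{c_{x,n}(s)}(n,\xi)=\angle(n,\xi)=\min\big(\pi,d_T(n,\xi)\big)$ (cf.\ \cite{Bal,BGS}) gives $\liminf_i\angle_{q_i}(\mu_i,\xi_i)\ge\min\big(\pi,d_T(n,\xi)\big)$. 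Since $\angle_{q_i}(x,\xi_i)$ converges, so does $\angle_{q_i}(\mu_i,\xi_i)=\pi-\angle_{q_i}(x,\xi_i)$, with limit $\ge\min(\pi,d_T(n,\xi))$; therefore $\angle_q(p,\zeta)=\pi-\lim_i\angle_{q_i}(\mu_i,\xi_i)\le\pi-\min\big(\pi,d_T(n,\xi)\big)$, which is the required bound and completes the contradiction.

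I expect the main obstacle to be exactly this last paragraph. The bookkeeping with the cone topology — that $c_i\to c_{x,n}$ together with their tangents, that $\mu_i\to n$, and that the directions at $c_i(s_0)$ toward $\xi_i$ converge — is routine but must be handled with care. The one genuinely non-elementary point is the identity $\lim_{s\to\infty}\angle_{c_{x,n}(s)}(n,\xi)=\min\big(\pi,d_T(n,\xi)\big)$, which is precisely the bridge between the coarse convergence dynamics of $\Gamma$ on $X$ and the Tits metric on $\pX$, and which accounts for the constant $\pi$ appearing in the statement. Everything else reduces to the isometric invariance of angles, compactness of $\pX$, and convexity in nonpositive curvature.
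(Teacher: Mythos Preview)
The paper does not give its own proof of this theorem; it is quoted verbatim from \cite[Lemma~18]{PS} and used as a black box. Your argument is correct and is essentially the standard CAT$(0)$ proof (as in \cite{PS}): reduce by contradiction and cone-compactness to a single angle estimate, transfer via the isometry $\c_i^{-1}$ to $\angle_{q_i}(x,\xi_i)$ at basepoints $q_i\to n$, complement to $\pi-\angle_{q_i}(\mu_i,\xi_i)$, and invoke monotonicity of $s\mapsto\angle_{c(s)}(c(+\infty),\xi)$ together with $\lim_{s\to\infty}\angle_{c(s)}(c(+\infty),\xi)=\angle(c(+\infty),\xi)=\min(\pi,d_T)$. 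The continuity and monotonicity facts you flag are indeed the standard ones from \cite{Bal,BGS} (or Bridson--Haefliger II.9.8), and your handling of the edge cases $\theta=0$ and $\theta=\pi$ via $\xi\neq n$ is clean.
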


Consider a pair of $V_i$ and $N_j$ from the regular partition-covers of $S_xX$ and $S_yX$ respectively. For simplicity, we just denote $V:=V_i$ and $N:=N_j$. Then $N\subset \text{int}B_{\theta_j}^\a(w_0)$ and $V\subset \text{int}B_{\theta'_i}^\b(v_0)$ for some $w_0\in S_xX$ and $v_0\in S_yX$. We also denote for $a>0$,
\begin{equation*}
\begin{aligned}
B_aN&:=\{v\in S_xX: d(v,N)\le a\}\\
B_{-a}N&:=\{v\in N: B(v,a)\subset N\}.
\end{aligned}
\end{equation*}
Write $t_0:=s(v_0)=b_{v_0^-}(\pi v_0,p)$ where $p=x$. We denote the flow boxes by
\begin{equation*}
\begin{aligned}
&N^{\a}:=H^{-1}(N^-\times N^+\times [-\a,\a]),\\
&V^\b:=H^{-1}(V^-\times V^+\times (t_0+[-\b,\b])).
\end{aligned}
\end{equation*}

Notice that $N^\a\subset \text{int}B_{\theta_j}^\a(w_0)$ and $V^\b\subset \text{int}B_{\theta'_i}^\b(v_0)$.
Given $\e>0$, we always consider $\frac{\e^2}{100}\le \a, \b\le \frac{3\e}{2}$.
By carefully adjusting the regular partition-covers, we can guarantee that
\begin{equation}\label{e:boundary}
\begin{aligned}
\mu_p(\partial V^+)=\mu_p(\partial V^-)=\mu_p(\partial N^+)=\mu_p(\partial N^-)=0.
\end{aligned}
\end{equation}

In the next section, we will count the number of elements in certain subsets of $\Gamma$. Let us collect the definitions here for convenience.
\begin{equation*}
\begin{aligned}
\C(t,\a,\b)&:=\{\c\in \C: N^\a\cap \phi^{-t}\c V^\b\neq \emptyset\},\\
\C_{-\rho}(t,\a,\b)&:=\{\c\in \C: (B_{-\rho}N)^\a\cap \phi^{-t}\c (B_{-\rho}V)^\b\neq \emptyset\},\\
\C^*&:=\{\c\in \C: \c V^+ \subset N^+ \text{\ and\ }\c^{-1}N^-\subset V^-\},\\
\C^*(t,\a,\b)&:=\C^*\cap \C(t,\a,\b).
\end{aligned}
\end{equation*}

\begin{lemma}\label{expansion}
For every $\rho>0$, there exists some $T_2> 0$ such that for all $t\ge T_2$, we have $\C_{-\rho}(t,\a,\b)\subset \C^*(t,\a,\b)$.
\end{lemma}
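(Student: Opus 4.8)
The plan is to argue by contradiction and feed the geometric data into the $\pi$-convergence theorem (Theorem \ref{piconvergence}) with the extremal parameter $\theta=\pi$. First dispose of the trivial half: since $B_{-\rho}N\subset N$ and $B_{-\rho}V\subset V$ we have $(B_{-\rho}N)^\a\subset N^\a$ and $(B_{-\rho}V)^\b\subset V^\b$, so $\C_{-\rho}(t,\a,\b)\subset\C(t,\a,\b)$ for every $t$; hence it suffices to find $T_2$ so that $t\ge T_2$ and $\gamma\in\C_{-\rho}(t,\a,\b)$ force $\gamma\in\C^*$. Suppose not: there are $t_i\to\infty$ and $\gamma_i\in\C_{-\rho}(t_i,\a,\b)\setminus\C^*$, together with $u_i\in(B_{-\rho}N)^\a$ such that $w_i:=\phi^{t_i}u_i\in\gamma_i(B_{-\rho}V)^\b$; set $w_i':=\gamma_i^{-1}w_i\in(B_{-\rho}V)^\b$. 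The flow boxes $N^\a\subset B^\a_{\theta_j}(w_0)$ and $V^\b\subset B^\b_{\theta_i'}(v_0)$ have compact closure (obtained from the compact sets of Lemma \ref{diameter}(2) by flowing over a compact time interval), so after passing to a subsequence $u_i\to u\in\overline{(B_{-\rho}N)^\a}$ and $w_i'\to w\in\overline{(B_{-\rho}V)^\b}$.

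Next I would identify the limiting boundary points. By Lemma \ref{diameter}(3), $\pi(u_i)$ stays within $4\e$ of $x$ and $\pi(w_i')$ within $4\e$ of $y=\pi(v_0)$. Since $\gamma_i$ is an isometry commuting with $\phi$, one has $\pi(w_i)=\gamma_i\pi(w_i')$ and $\pi(\phi^{-t_i}w_i')=\gamma_i^{-1}\pi(u_i)$, whence $d(\gamma_ix,x)$ and $d(\gamma_i^{-1}x,x)$ both exceed $t_i-8\e-d(x,y)\to\infty$; passing to a further subsequence, $\gamma_ix\to\xi\in\pX$ and $\gamma_i^{-1}x\to\eta\in\pX$. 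Now invoke the standard continuity of $(v,t)\mapsto\pi(\phi^tv)$ up to the boundary in the cone topology (cf. \cite{Eb1,EO}): $\pi(\phi^{t_i}u_i)\to u^+$ and $\pi(\phi^{-t_i}w_i')\to w^-$. Because $\pi(\phi^{t_i}u_i)=\gamma_i\pi(w_i')$ remains a bounded distance from $\gamma_ix$, and $\pi(\phi^{-t_i}w_i')=\gamma_i^{-1}\pi(u_i)$ remains a bounded distance from $\gamma_i^{-1}x$, and bounded displacements do not change limits in $\pX$, I conclude $\xi=u^+$ and $\eta=w^-$.

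Two structural facts then close the argument. First, \emph{deepness yields interiority}: if $v$ is a limit of points $v_k$ with $B(v_k,\rho)\subset N$, then $B(v,\rho')\subset N$ for every $\rho'<\rho$, so $\overline{B_{-\rho}N}\subset\text{int}(N)$ in $S_xX$; pushing forward by the homeomorphisms $v\mapsto v^{+}$ and $v\mapsto v^{-}$ of $S_xX$ and $S_yX$ onto $\pX$, it follows that $u^+=\lim u_i^+\in\text{int}(N^+)$ and $w^-=\lim (w_i')^-\in\text{int}(V^-)$ (visual interiors). Second, \emph{regularity in the Tits metric}: since $w_0$ is regular and $N^\a$ sits inside the flow box furnished by Proposition \ref{crucial}, every pair $(\zeta,\zeta')\in\overline{N^-}\times\overline{N^+}$ is joined by a unique regular geodesic, hence $d_T(\zeta,\zeta')>\pi$; the same holds for $\overline{V^-}\times\overline{V^+}$ using $v_0$. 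As $\xi=u^+\in\overline{N^+}$ and $\eta=w^-\in\overline{V^-}$, this gives $\overline{V^+}\cap\overline{B}_T(\eta,\pi)=\emptyset$ and $\overline{N^-}\cap\overline{B}_T(\xi,\pi)=\emptyset$.

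Finally, apply Theorem \ref{piconvergence} to the sequence $(\gamma_i)$ with $\theta=\pi$ and $K=\overline{V^+}$: since $\gamma_ix\to\xi$ and $\gamma_i^{-1}x\to\eta$, we get $\gamma_i\overline{V^+}\to\overline{B}_T(\xi,0)=\{u^+\}$, so by interiority $\gamma_iV^+\subset\text{int}(N^+)\subset N^+$ for all large $i$; applying the theorem to $(\gamma_i^{-1})$ with $\theta=\pi$ and $K=\overline{N^-}$ gives $\gamma_i^{-1}N^-\subset\text{int}(V^-)\subset V^-$ for all large $i$. Hence $\gamma_i\in\C^*$ for large $i$, contradicting $\gamma_i\notin\C^*$, and the lemma follows. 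I expect the main obstacles to be the identification $\xi=u^+$, $\eta=w^-$ (keeping the reference point, the footpoints $\pi(u_i)$, and the geodesic endpoints simultaneously under control) and the interiority step — which is precisely where the hypothesis $\rho>0$ is indispensable, since without the $\rho$-deepening $u^+$ could lie on $\partial N^+$ and the final inclusion could fail.
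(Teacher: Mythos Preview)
Your argument is correct and follows essentially the same strategy as the paper: contradiction, extraction of convergent subsequences by compactness of the flow boxes (Lemma~\ref{diameter}), identification of the boundary limits $\xi,\eta$ with the endpoints $u^+,w^-$, and then an application of the $\pi$-convergence theorem (Theorem~\ref{piconvergence}) with $\theta=\pi$ using that $d_T>\pi$ on the relevant product coming from Proposition~\ref{crucial}. The only cosmetic differences are that the paper treats the two inclusions $\gamma V^+\subset N^+$ and $\gamma^{-1}N^-\subset V^-$ in separate contradiction arguments rather than simultaneously, and asserts $(B_{-\rho}N)^\a$, $(B_{-\rho}V)^\b$ compact outright, whereas you more carefully work with closures and justify $\overline{B_{-\rho}N}\subset\text{int}(N)$; your extra care there is appropriate and does no harm.
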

\begin{proof}
Let $U$ be the interior of $N^+$. Then $(B_{-\rho}N)^+\subset U$. We claim that there exists $T_2> 0$ such that for all $t\ge T_2$ and $\c\in \Gamma$, if $(B_{-\rho}N)^\a\cap \phi^{-t}\c (B_{-\rho}V)^\b\neq \emptyset$, then $\c V^+\subset U$.

Let us prove the claim. Assume not. Then for each $i$, there exist $t_i\to \infty$ and $\c_i\in \C$ such that $v_i\in (B_{-\rho}N)^\a\cap \phi^{-t_i}\c_i (B_{-\rho}V)^\b$, but $\c_iV^+ \nsubseteq U$. Clearly, for any $x\in X$, $\c_ix$ goes to infinity. By passing to a subsequence, let us assume that $\c_ix\to \xi\in \pX$.

By Lemma \ref{diameter}, $(B_{-\rho}N)^\a$ and $(B_{-\rho}V)^\b$ are both compact. By passing to a subsequence, we may assume that $v_i\to v\in (B_{-\rho}N)^\a$ and $\c_i^{-1}\phi^{t_i}v_i\to w\in (B_{-\rho}V)^\b$. Note that $\c_i \pi w\to \xi\in \pX$. Since $d(\c_iw, \phi^{t_i}v_i)\to 0$, we have $\xi=\lim_i \pi \phi^{t_i}v_i\in (B_{-\rho}N)^+$.

We may assume that $\c_i^{-1}\pi v\to \eta\in \pX$. Let $w_i=\c_i^{-1}\phi^{t_i}v_i\in (B_{-\rho}V)^\b$. Then $d(\c_i^{-1}v, \phi^{-t_i}w_i)=d(\c_i^{-1}v, \c_i^{-1}v_i)\to 0$, and thus
$$d(\c_i^{-1}\pi v, \pi \phi^{-t_i}w_i)\to 0.$$
We then see that $\eta=\lim_i \pi \phi^{-t_i}w_i\in (B_{-\rho}V)^-$.

Now we have $\c_i\pi v\to \xi\in (B_{-\rho}N)^+$ and $\c_i^{-1}\pi v\to \eta\in (B_{-\rho}V)^-$. Note that $d_T((B_{-\rho}V)^-,V^+)>\pi$. Applying Theorem \ref{piconvergence} with $\theta=\pi$, we have $\c_iV^+\subset U$ for all $i$ sufficiently large. A contradiction and the claim follows.

By the claim, there exists some $T_2> 0$ such that for all $t\ge T_2$ and $\c\in \C_{-\rho}(t,\a,\b)$, we have $\c V^+ \subset U\subset N^+$.

Analogously, by reversing the roles of $N^\a$ and $V^\b$, and the roles of $\c$ and $\c^{-1}$, we can prove that $\c^{-1} N^- \subset V^-$. Thus $\c\in \C^*$ and the proof of the lemma is completed.
\end{proof}

\section{Using scaling and mixing}
In this section, we prove Theorem \ref{margulis}. First we use the scaling and mixing properties of Knieper measure $m$, to give an asymptotic estimates of $\#\C^*(t,\e^2,\b)$ and $\#\C(t,\e^2,\b)$.

\subsection{Intersection components}
\begin{lemma}\label{nhd}
We have $N\subset N^{\e^2}$, $V\subset V^{\e^2}$.
\end{lemma}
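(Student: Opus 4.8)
The plan is to unwind the definitions of the flow boxes $N^{\e^2}$ and $V^{\e^2}$; the whole content is a bookkeeping check that the original sets of vectors $N\subset S_xX$ and $V\subset S_yX$ sit inside the slightly fattened boxes, and it reduces to the equicontinuity of Busemann functions already recorded in Corollary \ref{equicon}.

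\textbf{The inclusion $N\subset N^{\e^2}$.} This direction needs nothing. Every $v\in N$ lies in $S_xX$ and hence $\pi v=x=p$, so its Hopf coordinate $s(v)=b_{v^-}(\pi v,p)=b_{v^-}(p,p)=0$ (since $c_{p,v^-}$ is unit speed from $p$), which belongs to $[-\e^2,\e^2]$. As trivially $v^-\in N^-$ and $v^+\in N^+$, we get $H(v)\in N^-\times N^+\times[-\e^2,\e^2]$, i.e. $v\in H^{-1}(N^-\times N^+\times[-\e^2,\e^2])=N^{\e^2}$.

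\textbf{The inclusion $V\subset V^{\e^2}$.} Fix $w\in V\subset S_yX$. Again $w^-\in V^-$ and $w^+\in V^+$ are automatic, so the only point is that the $s$-coordinate $s(w)=b_{w^-}(\pi w,p)=b_{w^-}(y,x)$ lies in $t_0+[-\e^2,\e^2]$; equivalently, since $t_0=s(v_0)=b_{v_0^-}(y,x)$, that $|b_{w^-}(y,x)-b_{v_0^-}(y,x)|\le\e^2$. I would get this from Corollary \ref{equicon}. Membership $w\in V\subset\mathrm{int}\,B_{\theta'_i}^{\a}(v_0)$ forces $w^-\in\bP_{\theta'_i}$ (the cone of half-angle $\theta'_i$ around $v_0$), and $v_0^-\in\bP_{\theta'_i}$ trivially; here $\theta'_i<\theta_0\le\theta_2$. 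Moreover the base point $q=y$ lies within $\diam\F+4\e$ of $\pi H^{-1}(\bP_{\theta'_i}\times\bF_{\theta'_i}\times[0,\infty))$: indeed $y=\pi v_0$ lies on the connecting geodesic $c_{v_0^-,v_0^+}$, whose point with $s$-coordinate $0$ is at distance $|t_0|=|b_{v_0^-}(y,x)|\le d(x,y)\le\diam\F$ from $y$ (using $|b_\xi(q,p)|\le d(q,p)$ and $x,y\in\F$). Corollary \ref{equicon} then yields $|b_{w^-}(y,x)-b_{v_0^-}(y,x)|<\e^2$, as required.

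\textbf{On the difficulty.} There is no genuine obstacle here: once the definitions of $H$, of $N^{\pm},V^{\pm}$, and of the flow boxes are unwound, the $N$-inclusion is immediate and the $V$-inclusion is an application of the already-established equicontinuity Corollary \ref{equicon}. The only step requiring a little care is verifying the hypothesis of that corollary, namely that the base point $y$ is within $\diam\F+4\e$ of the relevant projected flow box, which is handled by the elementary estimate $|b_\xi(q,p)|\le d(q,p)$ together with $x,y\in\F$.
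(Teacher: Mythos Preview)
Your proof is correct and follows essentially the same route as the paper's, both ultimately relying on Corollary \ref{equicon} for the nontrivial inclusion. The only difference is that you treat $N\subset N^{\e^2}$ by the direct observation $s(w)=b_{w^-}(p,p)=0$ (since $\pi w=x=p$), whereas the paper runs the equicontinuity argument already for $N$ (where it is vacuous, since $b_{w_0^-}(\pi w_0,p)=0$ anyway) so that ``analogously'' carries over verbatim to $V$; your separate verification of the hypothesis of Corollary \ref{equicon} for $q=y$ is a detail the paper omits.
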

\begin{proof}
Let $w\in N$. By Corollary \ref{equicon}, we have
\begin{equation*}
\begin{aligned}
|s(w)|=|b_{w^-}(\pi w,p)|=|b_{w^-}(\pi w_0,p)|\le |b_{w_0^-}(\pi w_0,p)|+\e^2=\e^2.
\end{aligned}
\end{equation*}
By definition $w\in N^{\e^2}$ and hence $N\subset N^{\e^2}$. $V\subset V^{\e^2}$ can be proved analogously.
\end{proof}

\begin{lemma}\label{intersect1}
For any $t>0$ and $\c\in\C$, we have
$$\{\c\in \C: N^\e\cap \phi^{-t}\c V\neq \emptyset\}\subset \{\c\in \C: N^{\e^2}\cap \phi^{-t}\c V^{\e}\neq \emptyset\}.$$
\end{lemma}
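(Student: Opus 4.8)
The plan is to establish the stronger set-theoretic inclusion
\[\{\c\in\C:\ N^\e\cap\phi^{-t}\c V\neq\emptyset\}\ \subset\ \{\c\in\C:\ N^{\e^2}\cap\phi^{-t}\c V^\e\neq\emptyset\},\]
which immediately yields the stated inequality of cardinalities. So I fix $t>0$ and a $\c\in\C$ with $N^\e\cap\phi^{-t}\c V\neq\emptyset$, and I aim to produce a point in $N^{\e^2}\cap\phi^{-t}\c V^\e$.

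Pick $v\in N^\e\cap\phi^{-t}\c V$ and write $\phi^tv=\c w$ with $w\in V$. From $v\in N^\e=H^{-1}(N^-\times N^+\times[-\e,\e])$ one reads off $v^-\in N^-$, $v^+\in N^+$ and $|s(v)|\le\e$, while from $w\in V\subset V^{\e^2}$ (Lemma \ref{nhd}) one gets $w^-\in V^-$, $w^+\in V^+$ and $|s(w)-t_0|\le\e^2$. The idea is to translate $v$ along the geodesic flow by a carefully chosen amount $r\in\RR$ and set $u:=\phi^{-r}v$. Translating changes neither the endpoints at infinity nor membership in $N^\pm$ and $V^\pm$, and since each $\c$ commutes with the geodesic flow,
\[\phi^tu=\phi^{-r}(\phi^tv)=\phi^{-r}(\c w)=\c\,(\phi^{-r}w).\]
Using $s(\phi^{-r}v)=s(v)-r$ and $s(\phi^{-r}w)=s(w)-r$, the requirements $u\in N^{\e^2}$ and $\phi^{-r}w\in V^\e$ reduce, respectively, to $|s(v)-r|\le\e^2$ and $|s(w)-t_0-r|\le\e$.

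It remains to see that such an $r$ exists, and this is the only genuine point in the proof. The two conditions say that $r$ lies in the intersection of the closed intervals of radii $\e^2$ and $\e$ centred at $s(v)$ and at $s(w)-t_0$; since the distance between these centres is at most $|s(v)|+|s(w)-t_0|\le\e+\e^2$, exactly the sum of the two radii, the intervals do meet. Choosing any such $r$ and setting $u:=\phi^{-r}v$ then gives a point of $N^{\e^2}\cap\phi^{-t}\c V^\e$, so this set is nonempty, and the lemma follows. I do not expect any real obstacle here; the one thing to watch is that one cannot simply normalize $v$ to the slice $\{s=0\}$, since the needed shift $|s(v)|$ may exceed the half-width $\e^2$ of the target box $N^{\e^2}$ — instead one distributes the shift across both boxes, exploiting (via Lemma \ref{nhd}) that $V$ lies within $\e^2$ of the central slice of $V^\e$, which leaves a slack of $\e-\e^2$ on the $V$-side to absorb the rest.
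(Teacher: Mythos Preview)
Your proof is correct and follows essentially the same approach as the paper: both arguments pick a point in $N^\e\cap\phi^{-t}\c V$ and translate it along the geodesic flow so that it lands in $N^{\e^2}$, using Lemma~\ref{nhd} to guarantee that the corresponding point on the $V$-side stays inside $V^\e$. The only cosmetic difference is that the paper first chooses the shift to force membership in $N^{\e^2}$ (with $|s|\le \e-\e^2$) and then verifies the $V^\e$ condition, whereas you phrase both requirements as overlapping intervals for the shift parameter; the underlying idea and estimates are the same.
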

\begin{proof}
Let $\c\in \C$ be such that $N^\e\cap \phi^{-t}\c V\neq \emptyset$. If $v\in \phi^{t}\c^{-1}N^\e\cap  V$, then $w:=\phi^{-t}\c v\in N^\e$. So there exists $w'\in N^{\e^2}$ such that $w=\phi^sw'$ where $|s|\le \e-\e^2$. Then $\phi^{t+s}w'=\phi^t w=\c v$. So $\c^{-1}\phi^{t+s}w'=v\in V\subset V^{\e^2}$ by Lemma \ref{nhd}. We have $\c^{-1}\phi^{t}w'\subset V^{|s|+\e^2}\subset V^\e$.
So $N^{\e^2}\cap \phi^{-t}\c V^{\e}\neq \emptyset$ and the lemma follows.
\end{proof}

\begin{lemma}\label{intersect2}
For any $a>0$, there exists $T_1>0$ large enough such that for any $t\ge T_1$,
\begin{equation*}
\begin{aligned}
&\{\c\in \C: N^\a\cap \phi^{-t}\c V^\b\neq \emptyset\}\\
\subset & \{\c\in\C: (B_aN)^{\a+\b+\e^2}\cap \phi^{-t}\c V\neq \emptyset\}.
\end{aligned}
\end{equation*}
\end{lemma}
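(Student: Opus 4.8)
The statement to prove, Lemma \ref{intersect2}, asserts a one-sided comparison of $\Gamma$-counting functions: if some $\gamma$ has $N^\alpha\cap \phi^{-t}\gamma V^\beta\neq\emptyset$, then (for $t$ large) the same $\gamma$ satisfies $(B_aN)^{\alpha+\beta+\ep^2}\cap \phi^{-t}\gamma V\neq\emptyset$. Since both sides are cardinalities of sets of group elements, it suffices to produce, for each $\gamma$ counted on the left, a witness vector realizing the intersection on the right — i.e., to show the left-hand set is a subset of the right-hand set. So the whole proof is a geometric bookkeeping argument: take a vector $v$ in the nonempty intersection on the left, flow it and translate it, and locate a nearby vector whose $H$-coordinates land in the required boxes.

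\textbf{Key steps.} First I would unwind the definitions. Pick $\gamma$ with $w\in N^\alpha\cap \phi^{-t}\gamma V^\beta$; then $w\in N^\alpha$ means $(w^-,w^+,s(w))\in N^-\times N^+\times[-\alpha,\alpha]$, and $\phi^t\gamma^{-1}w\in V^\beta$ — wait, more precisely $w=\phi^{-t}\gamma v$ for some $v\in V^\beta$, i.e. $\gamma^{-1}\phi^t w = v\in V^\beta$. Second, I want to replace $V^\beta$ by $V$ at the cost of moving to $(B_aN)$ on the other side and fattening the depth. The natural move: from $v\in V^\beta$ I can flow to a vector $v'\in V$, i.e. $v'=\phi^{-\sigma}v$ with $v'^-=v^-$, $v'^+=v^+$, $s(v')=s(v)-\sigma$, choosing $\sigma$ (of size at most $\beta+\ep^2$, using Lemma \ref{nhd}'s control $V\subset V^{\ep^2}$ in the reverse direction — actually $|s(v)-t_0|\le\beta$ and we want $|s(v')-t_0|\le$ something small; here I should be careful, since $V$ is \emph{not} literally $H^{-1}(V^-\times V^+\times\{t_0\})$, but the vectors in $V\subset S_yX$ have $s$-coordinate near $t_0$ by Corollary \ref{equicon}). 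Then $\gamma^{-1}\phi^{t+\sigma} w = v'\in V$, i.e. $\phi^{-(t+\sigma)}\gamma v' = \phi^{\sigma}w$, and $\phi^\sigma w$ has the same $(w^-,w^+)$ as $w$ but shifted $s$-coordinate, so it lies in $N^{\alpha+|\sigma|}\subset (B_aN)^{\alpha+\beta+\ep^2}$ provided $|\sigma|\le\beta+\ep^2$ and $N\subset B_aN$. Third — and this is where the hypothesis ``$t\ge T_1$'' must enter — the issue is that I have produced an intersection at time $t+\sigma$, not at time $t$; I need to absorb the shift $\sigma$ into the spatial fattening $B_a$. That is, $\phi^{-t}\gamma v'$ and $\phi^{-(t+\sigma)}\gamma v'$ differ by a flow of time $|\sigma|\le\beta+\ep^2\le 3\ep$, hence by Sasaki distance at most some fixed constant; so $\phi^{-t}\gamma v'$ lies within bounded distance of $(B_aN)^{\alpha+\beta+\ep^2}$ — but I need it actually \emph{inside}, which is where enlarging $N$ to $B_aN$ with $a$ fixed could fail if the time-shift constant exceeds $a$. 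The resolution must be that the relevant displacement is not the full Sasaki distance but only the change in the $(v^-,v^+)$-coordinates of the flowed vector, which is negligible; the $s$-coordinate change is exactly $\sigma$, already accounted for in the depth $\alpha+\beta+\ep^2$. So in fact the ``$t\ge T_1$'' hypothesis is likely needed for a different reason: to ensure that flowing forward by $t$ genuinely expands the unstable/horospherical directions so that the $B_a$-neighborhood captures the slight mismatch between $w^\pm$ and the center coordinates of $N$, via the contraction $d(\phi^t v,\phi^t v')\to 0$ type estimates (cf. the proof of Lemma \ref{expansion}). I would invoke a compactness-plus-$\pi$-convergence argument exactly as in Lemma \ref{expansion}: if the conclusion failed for a sequence $t_i\to\infty$ and $\gamma_i$, extract limits of $\gamma_i x$ and $\gamma_i^{-1}x$ in $\pX$, and derive that the limiting configuration forces the witness into $(B_aN)^{\alpha+\beta+\ep^2}$, contradiction.

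\textbf{Main obstacle.} The delicate point is matching up the two ``centers'': on the left the boxes $N^\alpha, V^\beta$ are defined via the Hopf map with \emph{exact} coordinate windows, while $V\subset S_yX$ and $B_aN$ are defined via Sasaki balls, so one must convert between angle/visual control at the basepoints $x,y$ and the intrinsic $H$-coordinates of flowed vectors — and crucially show the conversion error is swallowed by the extra slack $a$ (in the $B_a$) and $\ep^2$ (in the depth), uniformly in $\gamma$, once $t$ is large. I expect the cleanest route is: (i) reduce to an inclusion of $\Gamma$-sets; (ii) for a counted $\gamma$, exhibit $v'\in V$ and show $\phi^{-t}\gamma v'$ is within Sasaki-distance $a$ of some point of $N^{\alpha+\beta+\ep^2}$, hence in $(B_aN)^{\alpha+\beta+\ep^2}$; (iii) carry out (ii) by a contradiction/compactness argument parallel to Lemma \ref{expansion}, using that for $t$ large the geodesic from $\gamma v'$ stays close to the geodesic through $w$ (both project near $\xi=\lim\gamma_i x\in N^+$), so their basepoints at the relevant times are within $a$. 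The main work, and the part I'd be most careful about, is step (iii): making the ``$t\ge T_1$'' uniform over all $\gamma$ simultaneously, which is precisely what the $\pi$-convergence theorem buys us but which requires setting up the right compact sets.
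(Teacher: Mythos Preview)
Your plan has a genuine gap at the second step. You write ``from $v\in V^\beta$ I can flow to a vector $v'\in V$, i.e.\ $v'=\phi^{-\sigma}v$'', but this cannot work: $V$ is a subset of $S_yX$ (vectors \emph{based at} $y$), not the Hopf slice $H^{-1}(V^-\times V^+\times\{t_0\})$. Flowing $v$ by any time keeps it on the geodesic $c_v$, which in general does not pass through $y$, so $\phi^{-\sigma}v$ will almost never land in $V$. You flag this (``I should be careful, since $V$ is not literally\ldots'') but then proceed as if it were fixable by controlling $s$-coordinates; it is not. Consequently your step (ii) --- ``exhibit $v'\in V$'' --- has no candidate, and the rest of the argument (absorbing a time shift $\sigma$ into the depth, etc.) is built on this nonexistent $v'$.

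The paper's proof produces the witness in $V$ differently. Given $v\in\phi^tN^\alpha\cap\gamma V^\beta$, one has $(\gamma^{-1}v)^-\in V^-$, so by the very definition of $V^-$ there exists $w\in V$ with $w^-=(\gamma^{-1}v)^-$. This $w$ is the replacement for your $v'$. Now $w$ and $\gamma^{-1}v$ share the same backward endpoint, so on the strong unstable leaf $W^u(w)$ there is a (unique) $w'$ with the same forward endpoint as $\gamma^{-1}v$; then $\gamma^{-1}v=\phi^bw'$ for some $|b|\le\beta+\e^2$ (here Lemma~\ref{nhd} is used to bound $|s(w)-t_0|$). This gives $\gamma w'\in\phi^tN^{\alpha+\beta+\e^2}$, absorbing $b$ into the depth exactly as you anticipated --- but via an unstable-leaf projection, not a flow shift. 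Finally, $\gamma w$ and $\gamma w'$ are two nearby vectors on the same strong unstable leaf, and the role of ``$t\ge T_1$'' is the elementary \emph{comparison theorem} (thin triangles in nonpositive curvature): the geodesic of $\gamma w'$ comes within $4\e$ of $x$ at time $-t$, and $\pi\gamma w$ is within $4\e$ of $\pi\gamma w'$, so for $t$ large the direction at $x$ toward the foot of $\gamma w$ lies in $B_aN$. No $\pi$-convergence or compactness-contradiction argument is needed; the paper invokes neither here.
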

\begin{proof}
Let $\c\in \C$ such that $N^\a\cap \phi^{-t}\c V^\b\neq \emptyset$. Then there exists $v\in \phi^{t}N^\a\cap \c V^\b$. So $\c^{-1}v\in V^\b$ and  there exists $w\in V$ such that $w^-=\c^{-1}v^-$. So $|s(w)|\le \e^2$ since $V\subset V^{\e^2}$ by Lemma \ref{nhd}. Moreover, there exists $w'\in W^u(w)$ (hence $s(w)=s(w')$) and $\phi^bw'=\c^{-1}v$ for some $b\in \RR$. It follows that $|b|\le \b+\e^2$ and $d(w,w')\le 4\e$ by Lemma \ref{diameter}. Then $\c w'\in W^u(\c w)$ with $d(\c w,\c w')\le 4\e$, $s(\c w)=s(\c w')$ and $\phi^b \c w'=v$. It follows that $\c w'\in  \phi^tN^{\a+\b+\e^2}$. By comparison theorem (see \eqref{e:comp}), there exists $T_1>0$ large enough such that for any $t\ge T_1$, then $d(\c w,\c w')\le 4\e$ implies that $\c w\in \phi^{[0,\infty]}(B_aN)$. We have that $\phi^{-t}\c w\in (B_aN)^{\a+\b+\e^2}\cap \phi^{-t}\c V$. The lemma follows.
\end{proof}

\subsection{Depth of intersection}

Given $\xi\in \pX$ and $\c\in \C$, define $b_\xi^\c:=b_\xi(\c p, p)$.

\begin{lemma}\label{intersection1}
Let $\xi,\eta\in N^-$, and $\c\in \C(t,\a,\b)$ with $t>0$. Then $|b_\xi^\c-b_\eta^\c|<\e^2$.
\end{lemma}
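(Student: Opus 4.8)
The plan is to reduce the estimate $|b_\xi^\c - b_\eta^\c| < \e^2$ to an application of the equicontinuity Corollary \ref{equicon}, after showing that the point $\c p$ is located in the region described in that corollary (within $\diam \F + 4\e$ of the forward flow box $\pi H^{-1}(\bP_\theta \times \bF_\theta \times [0,\infty))$). First I would observe that $b_\xi^\c - b_\eta^\c = b_\xi(\c p, p) - b_\eta(\c p, p)$, so by Corollary \ref{equicon} (applied with the roles of $\bP_\theta$ and $\bF_\theta$ as needed, since $\xi, \eta \in N^-$) it suffices to locate $q := \c p$ within $\diam \F + 4\e$ of $\pi H^{-1}(\bP_\theta \times \bF_\theta \times [0,\infty))$, noting that $N^-$ sits inside the relevant $\bP_\theta$-type set for the flow box around $w_0 \in S_x X$.

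Next I would unpack what it means for $\c$ to lie in $\C(t,\a,\b)$: there is a vector $u \in N^\a \cap \phi^{-t}\c V^\b$, so $u \in N^\a$ and $\phi^t u \in \c V^\b$. Since $\a \le \tfrac{3\e}{2}$, Lemma \ref{diameter}(3) gives $\diam \pi N^\a < 4\e$, and $\pi u$ lies in $\pi H^{-1}(N^- \times N^+ \times [-\a,\a])$; since $s(\phi^t u) = s(u) + t$ with $t > 0$ large (indeed $t>0$ suffices since $s(u) \ge -\a$ forces $s(\phi^t u) \ge t - \a$, and for the flow box around $w_0$ one uses that $\pi u$ already lies in the $[0,\infty)$-part up to an error absorbed in $4\e$), the point $\pi \phi^t u$ lies on the forward flow-box set $\pi H^{-1}(N^- \times N^+ \times [0,\infty))$ (or within $4\e$ of it). On the other hand $\phi^t u \in \c V^\b$ means $\phi^{-t}\c^{-1}(\pi \phi^t u) \dots$ — more directly, $\c^{-1}\pi\phi^t u \in \pi V^\b$, a bounded set, hence $\pi \phi^t u \in \c(\pi V^\b)$. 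Translating $\c p$ back: since $p = x \in \F$ and $\pi \phi^t u \in \c(\pi V^\b)$, and $\c$ acts by isometry, $d(\c p, \pi\phi^t u) = d(p, \c^{-1}\pi\phi^t u) \le d(p, y) + \diam \pi V^\b \le \diam \F + 4\e$ using $x,y \in \F$, Lemma \ref{diameter}(3), and the triangle inequality. Hence $\c p$ is within $\diam \F + 4\e$ of the forward flow box, as required.

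With $q = \c p$ so located, Corollary \ref{equicon} applies directly to the pair $\xi, \eta \in N^-$ and yields $|b_\xi(\c p, p) - b_\eta(\c p, p)| < \e^2$, which is the claim. The main obstacle I anticipate is bookkeeping the reference point and the various flow boxes correctly: $N^-$ arises from the flow box around $w_0 \in S_x X$ with reference point $p = x$, while $V^\b$ is built around $v_0 \in S_y X$ with the same reference point $p = x$, so one must check that "$\xi, \eta \in N^-$" does indeed place them in a set of the form $\bP_\theta$ (up to the choice of $\theta < \theta_0$) to which Corollary \ref{equicon} applies, and that the bound $\diam \F + 4\e$ is exactly the one appearing there. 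The geometric content — that a vector in $N^\a$ flowed forward by time $t$ ends up with its basepoint far out along the forward horospherical direction, so that $\c p$ is comparably far out — is straightforward from $s(\phi^t u) = s(u) + t$ and Lemma \ref{diameter}, and I expect no genuine difficulty there.
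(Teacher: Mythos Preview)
Your proposal is correct and follows the same approach as the paper's own proof: take $u\in N^\a\cap\phi^{-t}\c V^\b$, observe that $\pi\phi^t u$ lies in (or within $4\e$ of) the forward flow-box set while $\c^{-1}\pi\phi^t u\in\pi V^\b$, deduce $d(\c p,\pi\phi^t u)\le\diam\F+4\e$, and apply Corollary~\ref{equicon}. The paper's proof is essentially a compressed version of exactly this, writing $q=\c^{-1}\pi\phi^t v\in\pi V^\b$ and invoking Lemma~\ref{diameter} and Corollary~\ref{equicon} directly.
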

\begin{proof}
Let $\c\in \C(t,\a,\b)$, so there exists $v\in N^\a\cap \phi^{-t}\c V^\b$. There is $q\in \pi V^\b$ such that $\c q=\pi \phi^t v\in \pi H^{-1}(N^-\times N^+\times [0,\infty))$. Since $p,y\in \F$, we have $d(\c p, \c q)=d(p,q)\le \diam \F+4\e$ by Lemma \ref{diameter}. Thus by Corollary \ref{equicon}, $|b_\xi(\c p, p)-b_\eta(\c p, p)|<\e^2$ for any $\xi,\eta\in N^-$.
\end{proof}

\begin{lemma}\label{intersection3}
Given any $\c\in \C^*(t,\a,\b)$ and any $t\in \RR$, we have
\begin{equation*}
\begin{aligned}
N^{\e^2}\cap \phi^{-t}\c V^\b=\{&w\in E^{-1}(N^-\times \c V^+): \\
&s(w)\in [-\e^2,\e^2]\cap (b_{w^{-}}^\c-t+t_0+[-\b,\b])\}
\end{aligned}
\end{equation*}
where $t_0=s(v_0)$.
\end{lemma}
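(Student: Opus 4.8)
The plan is to unwind the definitions of the flow boxes $N^{\e^2}$ and $V^\b$ via the Hopf map $H$, and to use the defining property $\gamma\in\C^*$ (namely $\gamma V^+\subset N^+$ and $\gamma^{-1}N^-\subset V^-$) together with the $\Gamma$-equivariance of the Busemann cocycle to express the intersection condition purely in terms of $(w^-,w^+,s(w))$. I will suppress the distinction between $H$ and the ``$E$'' appearing in the statement (presumably $E=H$, or $E$ is the restriction $v\mapsto(v^-,v^+)$ of $H$ to the first two coordinates; either way the argument is the same), and just read off coordinates.

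First I would analyze membership in $N^{\e^2}=H^{-1}(N^-\times N^+\times[-\e^2,\e^2])$: a vector $w$ lies in $N^{\e^2}$ iff $w^-\in N^-$, $w^+\in N^+$, and $s(w)\in[-\e^2,\e^2]$. Next I would analyze membership in $\phi^{-t}\gamma V^\b$. By definition $V^\b=H^{-1}(V^-\times V^+\times(t_0+[-\b,\b]))$, so $w\in\phi^{-t}\gamma V^\b$ iff $\gamma^{-1}\phi^t w\in V^\b$, i.e. $(\gamma^{-1}\phi^t w)^-\in V^-$, $(\gamma^{-1}\phi^t w)^+\in V^+$, and $s(\gamma^{-1}\phi^t w)\in t_0+[-\b,\b]$. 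Using equivariance of endpoints under $\gamma$ and the identity $s(\phi^t w)=s(w)+t$, the first two conditions become $\gamma^{-1}w^-\in V^-$ and $\gamma^{-1}w^+\in V^+$, equivalently $w^-\in\gamma V^-$ and $w^+\in\gamma V^+$. The third condition requires the cocycle computation: $s(\gamma^{-1}\phi^t w)=b_{(\gamma^{-1}\phi^t w)^-}(\pi(\gamma^{-1}\phi^t w),p)=b_{\gamma^{-1}w^-}(\gamma^{-1}\pi\phi^t w,p)$, and by $\Gamma$-equivariance of the Busemann function this equals $b_{w^-}(\pi\phi^t w,\gamma p)=b_{w^-}(\pi\phi^t w,p)-b_{w^-}(\gamma p,p)=\big(s(w)+t\big)-b_{w^-}^\gamma$. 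Hence the third condition reads $s(w)+t-b_{w^-}^\gamma\in t_0+[-\b,\b]$, i.e. $s(w)\in b_{w^-}^\gamma-t+t_0+[-\b,\b]$.

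Then I would assemble the four surviving conditions: $w^-\in N^-$, $w^-\in\gamma V^-$, $w^+\in N^+\cap\gamma V^+$, and $s(w)\in[-\e^2,\e^2]\cap\big(b_{w^-}^\gamma-t+t_0+[-\b,\b]\big)$. The point of assuming $\gamma\in\C^*$ is precisely to simplify the endpoint constraints: since $\gamma^{-1}N^-\subset V^-$ we have $N^-\subset\gamma V^-$, so $w^-\in N^-$ already forces $w^-\in\gamma V^-$ and the second condition is redundant; and since $\gamma V^+\subset N^+$ we have $N^+\cap\gamma V^+=\gamma V^+$, so the third condition collapses to $w^+\in\gamma V^+$. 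What remains is exactly $w^-\in N^-$, $w^+\in\gamma V^+$ (i.e. $(w^-,w^+)\in N^-\times\gamma V^+$, which is the ``$E^{-1}(N^-\times\gamma V^+)$'' in the statement), and the stated constraint on $s(w)$. This is the claimed equality.

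I do not expect a serious obstacle here; this is a bookkeeping lemma. The one point requiring care is the cocycle/equivariance manipulation of the Busemann function under the deck transformation $\gamma$ — specifically the cocycle identity $b_\xi(q,p)=b_\xi(q,\gamma p)+b_\xi(\gamma p,p)$ and $\Gamma$-invariance $b_{\gamma\xi}(\gamma q,\gamma p)=b_\xi(q,p)$ — and making sure the reference point $p=x$ is handled consistently with the definition $s(v)=b_{v^-}(\pi v,p)$ and with the definition $b_\xi^\gamma:=b_\xi(\gamma p,p)$. I would also remark that the inclusion $\supseteq$ and $\subseteq$ in the displayed set equality are both immediate once the chain of equivalences above is written as a two-sided implication, so no separate argument for each direction is needed. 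Finally I would note explicitly where $\gamma\in\C^*$ (as opposed to merely $\gamma\in\C(t,\a,\b)$) is used, namely only in discarding the two redundant endpoint conditions.
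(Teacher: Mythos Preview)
Your proposal is correct and is essentially identical to the paper's own proof: both unwind the Hopf-map definitions of $N^{\e^2}$ and $V^\b$, use the $\Gamma$-equivariance and cocycle identity of the Busemann function to rewrite $s(\gamma^{-1}\phi^t w)=s(w)+t-b_{w^-}^\gamma$, and then invoke $\gamma\in\C^*$ exactly to discard the redundant endpoint constraints $w^-\in\gamma V^-$ and $w^+\in N^+$. Your remark that $E$ should be read as (the first two coordinates of) $H$ is also how the paper uses it.
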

\begin{proof}
At first, we claim that $N^{\e^2}\cap \phi^{-t}\c V^\b\subset E^{-1}(N^-\times \c V^+)$. Indeed, let $v\in N^{\e^2}\cap \phi^{-t}\c V^\b$. Since $v\in N^{\e^2}$, $v^-\in N^-$. On the other hand, since $v\in \phi^{-t}\c V^\b$, we have $v^+\in \c V^+$. This proves the claim.

Notice that as $\c\in \C^*(t,\a,\b)$, one has $\c V^+\subset N^+$ and $\c^{-1}N^-\subset V^-$. Let $w\in E^{-1}(N^-\times \c V^+)\subset E^{-1}(N^-\times N^+)$. By definition of $N^{\e^2}$, $w\in N^{\e^2}$ if and only if $s(w)\in [-\e^2,\e^2]$.

It remains to show that $w\in \phi^{-t}\c V^\b$ if and only if $s(w)\in (b_{w^{-}}^\c-t+t_0+[-\b,\b])$. To see this, note that
\begin{equation*}
\begin{aligned}
\c V^\b&=\{\c v: v\in E^{-1}(V^-\times V^+) \text{\ and\ }b_{v^-}(\pi v, p)\in t_0+[-\b,\b]\}\\
&=\{w\in E^{-1}(\c V^-\times \c V^+): b_{w^-}(\pi w, \c p)\in t_0+[-\b,\b]\}.
\end{aligned}
\end{equation*}
Since $s(\phi^t w)=s(w)+t$ and
$$b_{w^-}(\pi w, \c p)=b_{w^-}(\pi w, p)+b_{w^-}(p, \c p)=s(w)-b_{w^{-}}^\c,$$
we know $\phi^t w\in \c V^\b$ if and only if $s(w)-b_{w^{-}}^\c+t\in t_0+[-\b,\b]$. The lemma follows.
\end{proof}

The following lemma implies that the intersection components also have product structure.
\begin{lemma}\label{depth1}
If $\c\in \C^*(t,\e^2,\b)$, then
$$N^{\e^2}\cap \phi^{-(t+4\e^2)}\c V^{\b+8\e^2}\supset H^{-1}(N^-\times \c V^+\times [-\e^2,\e^2]):=N^\c.$$
\end{lemma}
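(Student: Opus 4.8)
The statement to prove is an inclusion of a product set $N^{\mathfrak{c}} := H^{-1}(N^- \times \mathfrak{c}V^+ \times [-\varepsilon^2, \varepsilon^2])$ into the intersection $N^{\varepsilon^2} \cap \phi^{-(t+4\varepsilon^2)}\mathfrak{c}V^{\beta + 8\varepsilon^2}$, for $\mathfrak{c} \in \mathfrak{C}^*(t,\varepsilon^2,\beta)$. The natural approach is to take an arbitrary $w \in N^{\mathfrak{c}}$ and verify the two membership conditions separately. The containment $N^{\mathfrak{c}} \subset N^{\varepsilon^2}$ should be essentially immediate from the definition of $N^{\varepsilon^2} = H^{-1}(N^- \times N^+ \times [-\varepsilon^2,\varepsilon^2])$ together with the fact that $\mathfrak{c} \in \mathfrak{C}^*$ gives $\mathfrak{c}V^+ \subset N^+$; so $w^- \in N^-$, $w^+ \in \mathfrak{c}V^+ \subset N^+$, and $s(w) \in [-\varepsilon^2,\varepsilon^2]$, which is exactly what membership in $N^{\varepsilon^2}$ requires. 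The substance is in showing $w \in \phi^{-(t+4\varepsilon^2)}\mathfrak{c}V^{\beta+8\varepsilon^2}$.

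For the second condition, I would run the computation from Lemma~\ref{intersection3} in reverse, but now only requiring the weaker box $V^{\beta+8\varepsilon^2}$ shifted by time $t+4\varepsilon^2$. Since $w^- \in N^-$ and $\mathfrak{c} \in \mathfrak{C}^*$ gives $\mathfrak{c}^{-1}N^- \subset V^-$, and $w^+ \in \mathfrak{c}V^+$, the vector $\mathfrak{c}^{-1}w$ already lies in $E^{-1}(V^- \times V^+)$; so I only need to check that $b_{w^-}(\pi\phi^{t+4\varepsilon^2}w, \mathfrak{c}p)$ lands in $t_0 + [-(\beta+8\varepsilon^2), \beta+8\varepsilon^2]$, using the identity $b_{w^-}(\pi w, \mathfrak{c}p) = s(w) - b_{w^-}^{\mathfrak{c}}$ from the proof of Lemma~\ref{intersection3}. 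The point is that since $\mathfrak{c} \in \mathfrak{C}^*(t,\varepsilon^2,\beta) \subset \mathfrak{C}(t,\varepsilon^2,\beta)$, there is an actual intersection point $v \in N^{\varepsilon^2} \cap \phi^{-t}\mathfrak{c}V^\beta$, which pins down $b_{v^-}^{\mathfrak{c}} - t + t_0$ to within $\beta + \varepsilon^2$ of $s(v) \in [-\varepsilon^2,\varepsilon^2]$, hence to within $\beta + 2\varepsilon^2$ of $0$. Then Lemma~\ref{intersection1} (applied to $\xi = w^-$, $\eta = v^-$, both in $N^-$) gives $|b_{w^-}^{\mathfrak{c}} - b_{v^-}^{\mathfrak{c}}| < \varepsilon^2$, so $b_{w^-}^{\mathfrak{c}} - t + t_0$ is within $\beta + 3\varepsilon^2$ of $0$. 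Combining with $s(w) \in [-\varepsilon^2,\varepsilon^2]$ and the time shift by $t + 4\varepsilon^2$, one gets that $s(\phi^{t+4\varepsilon^2}w) - b_{w^-}^{\mathfrak{c}} = s(w) + 4\varepsilon^2 + t - b_{w^-}^{\mathfrak{c}}$ lies in $t_0 + [-(\beta + 8\varepsilon^2), \beta + 8\varepsilon^2]$ with room to spare; the $4\varepsilon^2$ shift is exactly engineered so that the slack absorbs the various $\varepsilon^2$-errors.

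**Main obstacle.** The bookkeeping of constants is where care is needed: I must track exactly how the $\varepsilon^2$-errors from Lemma~\ref{intersection1} and from the existence of the anchor intersection point $v$ accumulate, and confirm that $\beta + 8\varepsilon^2$ (with the time shift $4\varepsilon^2$) is genuinely enough slack — i.e., that $\beta + (\text{accumulated error}) \le \beta + 8\varepsilon^2$. The geometric content is light; essentially everything reduces to the Busemann-cocycle identity $b_{w^-}(\pi w, \mathfrak{c}p) = s(w) - b_{w^-}^{\mathfrak{c}}$, the equivariance/box descriptions already recorded in Lemmas~\ref{intersection3} and~\ref{intersection1}, and the defining inclusions of $\mathfrak{C}^*$. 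I would therefore structure the proof as: (1) recall $N^{\mathfrak{c}} \subset E^{-1}(N^- \times N^+)$ and deduce $N^{\mathfrak{c}} \subset N^{\varepsilon^2}$; (2) fix $w \in N^{\mathfrak{c}}$ and an anchor $v \in N^{\varepsilon^2} \cap \phi^{-t}\mathfrak{c}V^\beta$; (3) use Lemma~\ref{intersection3}'s characterization on $v$ to locate $b_{v^-}^{\mathfrak{c}} - t + t_0$; (4) transfer to $w^-$ via Lemma~\ref{intersection1}; (5) assemble the $s(\phi^{t+4\varepsilon^2}w)$ estimate and conclude $w \in \phi^{-(t+4\varepsilon^2)}\mathfrak{c}V^{\beta+8\varepsilon^2}$ via the box description of $\mathfrak{c}V^{\beta+8\varepsilon^2}$.
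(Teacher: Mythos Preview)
Your proposal is correct and follows essentially the same route as the paper. The paper phrases the argument purely in terms of interval containments---from the anchor intersection it widens $[-\b,\b]$ by $2\e^2$ on each side to swallow $[-\e^2,\e^2]$, then shifts by at most $\e^2$ via Lemma~\ref{intersection1} and widens again---whereas you track the same errors pointwise for a fixed $w\in N^{\c}$; but the substance (anchor from $\c\in\C(t,\e^2,\b)$, Lemma~\ref{intersection3} characterization, transfer via Lemma~\ref{intersection1}) is identical. Your minor arithmetic overcounts (e.g.\ $\b+2\e^2$ where $\b+\e^2$ suffices) are harmless since the target slack $\b+8\e^2$ absorbs them.
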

\begin{proof}
Let $\c\in \C^*(t,\e^2,\b)$, then $N^{\e^2}\cap \phi^{-t}\c V^{\b}\neq \emptyset$. By Lemma \ref{intersection3}, there exists $\eta\in N^-$ such that
$$[-\e^2,\e^2]\cap (b_{\eta}^\c-t+t_0+[-\b,\b])\neq \emptyset.$$
It follows that $[-\e^2,\e^2]\subset (b_{\eta}^\c-t-2\e^2+t_0+[-\b,\b+4\e^2]).$ Then by Lemma \ref{intersection1}, for any $\xi\in N^-$ we have
$$[-\e^2,\e^2]\cap (b_{\xi}^\c-t-2\e^2+t_0+[-\b,\b+4\e^2])\neq \emptyset,$$
which in turn implies that
$$[-\e^2,\e^2]\subset  (b_{\xi}^\c-t-4\e^2+t_0+[-\b,\b+8\e^2]).$$
We are done by Lemma \ref{intersection3}.
\end{proof}
\subsection{Scaling and mixing calculation}
We use the following notations in the asymptotic estimates.
\begin{equation*}
\begin{aligned}
f(t)=e^{\pm C}g(t)&\Leftrightarrow e^{-C}g(t)\le f(t)\le e^{C}g(t) \text{\ for all\ } t;\\
f(t) \lesssim g(t) &\Leftrightarrow \limsup_{t\to \infty}\frac{f(t)}{g(t)}\le 1;\\
f(t) \gtrsim g(t) &\Leftrightarrow \liminf_{t\to \infty}\frac{f(t)}{g(t)}\ge 1;\\
f(t) \sim g(t) &\Leftrightarrow \lim_{t\to \infty}\frac{f(t)}{g(t)}= 1;\\
f(t)\sim e^{\pm C}g(t)&\Leftrightarrow e^{-C}g(t)\lesssim f(t)\lesssim e^{C}g(t).
\end{aligned}
\end{equation*}

\begin{lemma}\label{tscaling1}
If $\c\in \C^*(t,\e^2,\b)$, then
$$\frac{m(N^\c)}{m(V^\b)}=e^{\pm 26h\e}e^{ht_0}e^{-ht}\frac{\e^2\mu_p(N^-)}{\b\mu_p(V^-)}$$
where $N^\c$ is from Lemma \ref{depth1}.
\end{lemma}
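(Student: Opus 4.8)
The idea is to compute $m(N^\c)$ directly from the definition of the Knieper measure $m$ and compare it with $m(V^\b)$, which has essentially the same form. Recall that $m$ is obtained by integrating $\mathrm{Vol}\{\pi(P^{-1}(\xi,\eta)\cap A)\}$ against $d\bar\mu(\xi,\eta) = e^{h\beta_p(\xi,\eta)}d\mu_p(\xi)d\mu_p(\eta)$. For $A = N^\c = H^{-1}(N^-\times \c V^+\times [-\e^2,\e^2])$, the fiber $\pi(P^{-1}(\xi,\eta)\cap N^\c)$ is a segment of the (unique, by Proposition \ref{crucial}) regular geodesic connecting $\xi\in N^-$ and $\eta\in \c V^+$, cut out by the condition $s(\cdot)\in[-\e^2,\e^2]$; since $s(\phi^t w) = s(w)+t$, this segment has length exactly $2\e^2$. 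Therefore
\[
m(N^\c) = 2\e^2\int_{N^-}\int_{\c V^+}e^{h\beta_p(\xi,\eta)}\,d\mu_p(\eta)\,d\mu_p(\xi).
\]
Likewise $V^\b = H^{-1}(V^-\times V^+\times (t_0+[-\b,\b]))$ has fibers of length $2\b$, so
\[
m(V^\b) = 2\b\int_{V^-}\int_{V^+}e^{h\beta_p(\xi,\eta)}\,d\mu_p(\eta)\,d\mu_p(\xi).
\]

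Next I would control the Gromov product factor $e^{h\beta_p(\xi,\eta)}$ on each domain, and relate the $\c V^+$-integral to a $V^+$-integral. For the first point: since $\c\in\C^*(t,\e^2,\b)$ we have $\c V^+\subset N^+$, and by Lemma \ref{diameter} both $\pi H^{-1}(N^-\times N^+\times\{0\})$ and the corresponding set for $V$ have small diameter; moreover the connecting geodesics stay within $\e/10$ of $\pi v_0$ by Proposition \ref{crucial}. So on $N^-\times \c V^+$ the quantity $\beta_p(\xi,\eta)$ — which is the signed distance between the horospheres $H_\xi(p)$ and $H_\eta(p)$ along the connecting geodesic — differs from a reference value by $O(\e)$; being careful with the explicit constant gives a factor $e^{\pm c_1 h\e}$, and similarly on $V^-\times V^+$. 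For the second point: by the defining property of the Busemann density, $d(\c^{-1}_*\mu_p)(\eta) = d\mu_{\c^{-1}p}(\eta) = e^{-h b_\eta(\c^{-1}p,p)}d\mu_p(\eta)$, and substituting $\eta\mapsto \c^{-1}\eta$ in the $\c V^+$-integral turns it into an integral over $V^+$ with the weight $e^{h b_{\c\eta}(p,\c^{-1}p)\cdot(\pm1)}$ — here I must track the normalization convention carefully. Using $\C$-equivariance $\mu_{\c p}(\c A)=\mu_p(A)$ and Corollary \ref{equicon} (the relevant points lie within $\diam\F+4\e$ of the flow-box projections once $\c\in\C^*$, by the argument already used in Lemma \ref{intersection1}) to replace the Busemann weight by $e^{\pm\e^2}$ times its value at a single point $\eta_0$, the $\c V^+$-integral becomes $e^{\pm c_2 h\e}\,e^{-h b_{\eta_0}^{\c^{-1}}}\mu_p(V^+)\cdots$; combining with the Shadowing-type bookkeeping and the fact that $b^\c_{w^-}$ is nearly constant on $N^-$ (Lemma \ref{intersection1}), the $t$-dependence collects into a single factor $e^{-ht}$ and the $t_0$-dependence into $e^{ht_0}$.

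Putting these together, the ratio $m(N^\c)/m(V^\b)$ equals $(2\e^2)/(2\b)$ times the ratio of the two double integrals, which after the above manipulations is $e^{\pm c h\e}\,e^{ht_0}\,e^{-ht}\,\mu_p(N^-)/\mu_p(V^-)$ for some explicit constant; the statement records $c=26$. I expect the \textbf{main obstacle} to be bookkeeping the Busemann-function normalizations correctly — keeping straight which point each Busemann function is based at and normalized by, getting the sign of the exponent right when changing variables $\eta\mapsto \c^{-1}\eta$, and verifying that every auxiliary point to which Corollary \ref{equicon} is applied genuinely lies within $\diam\F+4\e$ of the appropriate flow-box projection so that the $e^{\pm\e^2}$ estimates are legitimate — rather than any conceptual difficulty. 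Assembling the explicit constant $26$ is then just a matter of adding up the finitely many $O(\e)$ and $O(\e^2)$ contributions (noting $\e^2\le\e$ since $\e<1/8$).
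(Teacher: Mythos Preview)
Your plan is correct and follows essentially the same approach as the paper: write $m(N^\c)$ and $m(V^\b)$ as $2\e^2$ (resp.\ $2\b$) times a double $\bar\mu$-integral, change variables $\eta\mapsto\c\eta$ using the Busemann density relation $d\mu_{\c^{-1}p}(\eta)=e^{-hb_\eta(\c^{-1}p,p)}d\mu_p(\eta)$, bound the Gromov products by $4\e$ via Lemma~\ref{diameter}, and then let the $\int_{V^+}$ factors cancel between numerator and denominator. The one place where your sketch is vague is the phrase ``Shadowing-type bookkeeping'' for extracting the factor $e^{-ht}e^{ht_0}$: the paper does this by a direct geometric estimate, choosing auxiliary points $q'\in\pi V^\b$ with $b_{\c^{-1}\xi}(q',p)=t_0$ and $q''\in\pi H^{-1}(N^-\times N^+\times\{0\})$ on the geodesic $c_{\xi,\c\eta}$, and then bounding $|d(q'',\c q')-t|\le 8\e$ by the triangle inequality and Lemma~\ref{diameter}, which yields $b_\eta(\c^{-1}p,p)=t\pm12\e+b_\eta(q',p)$; no Shadowing Lemma is involved.
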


\begin{proof}
The main work is to estimate $\b_p(\xi,\c\eta)$ and $b_\eta(\c^{-1}p,p)$ for any $\xi\in N^-$ and $\eta\in V^+$.

Firstly, take $q$ lying on the geodesic connecting $\xi$ and $\c\eta$ such that $b_\xi(q,p)=0$. Then
\begin{equation}\label{e:smallBuse}
\begin{aligned}
|\b_p(\xi,\c\eta)|=|b_\xi(q,p)+b_{\c\eta}(q,p)|=|b_{\c\eta}(q,p)|\le d(q,p)<4\e
\end{aligned}
\end{equation}
where we used Lemma \ref{diameter} in the last inequality.

Secondly, since $\c\in \C^*(t,\e^2,\b)$, there exist $v\in V^\b$ and $w\in N^{\e^2}$ such that $w=\phi^{-t}\c v$. Take $q'$ lying on the geodesic connecting $\c^{-1}\xi$ and $\eta$ such that $b_{\c^{-1}\xi}(q',p)=t_0$. Then $q'\in V^\b$. Define $q''$ to be the unique point in $\pi H^{-1}(N^-\times N^+\times \{0\})\cap c_{\xi,\c\eta}$. Then using Lemma \ref{diameter},
\begin{equation*}
\begin{aligned}
d(q'',\c q')&\le d(q'',\pi w)+d(\pi w,\pi \phi^t w)+d(\c \pi v, \c q')\le t+8\e\\
d(q'',\c q')&\ge d(\pi w,\pi \phi^t w)-d(q'',\pi w)-d(\c \pi v, \c q')\ge t-8\e.
\end{aligned}
\end{equation*}
Noticing that $q'',\c q'$ lie on the geodesic $c_{\xi, \c \eta}$, we have
\begin{equation*}
\begin{aligned}
&b_\eta(\c^{-1}p,p)=b_{\c\eta}(p,\c p)=b_{\c\eta}(p,\c q')+b_{\c\eta}(\c q',\c p)\\
\le &b_{\c\eta}(q'',\c q')+d(q'',p)+b_{\eta}(q',p)\le t+12\e+b_{\eta}(q',p)
\end{aligned}
\end{equation*}
and
\begin{equation*}
\begin{aligned}
&b_\eta(\c^{-1}p,p)=b_{\c\eta}(p,\c p)= b_{\c\eta}(p,\c q')+b_{\c\eta}(\c q',\c p)\\
\ge &b_{\c\eta}(q'',\c q')-d(q'',p)+b_{\eta}(q', p)\ge t-12\e+b_{\eta}(q',p).
\end{aligned}
\end{equation*}
Thus we have
\begin{equation*}
\begin{aligned}
\frac{m(N^\c)}{m(V^\b)}&=\frac{2\e^2}{2\b}\frac{\int_{N^-}\int_{V^+}e^{h\b_p(\xi,\c\eta)}d\mu_p(\xi)d\mu_{\c^{-1}p}(\eta)}
{\int_{V^-}\int_{V^+}e^{h\b_p(\xi',\eta')}d\mu_p(\xi')d\mu_{p}(\eta')}\\
&=\frac{\e^2}{\b}e^{\pm 4h\e}\frac{\int_{N^-}\int_{V^+}e^{-h b_\eta(\c^{-1}p,p)}d\mu_p(\xi)d\mu_{p}(\eta)}
{\int_{V^-}\int_{V^+}e^{h\b_p(\xi',\eta')}d\mu_p(\xi')d\mu_{p}(\eta')}\\
&=\frac{\e^2}{\b} e^{\pm 16h\e}e^{-ht}\frac{\int_{N^-}\int_{V^+}e^{-hb_\eta(q',p)}d\mu_p(\xi)d\mu_p(\eta)}{\int_{V^-}\int_{V^+}e^{\pm 8h\e}e^{-h(b_{\xi'}(q',p)+b_{\eta'}(q',p))}
d\mu_p(\xi')d\mu_{p}(\eta')}\\
&=\frac{\e^2}{\b} e^{\pm 24h\e}e^{-ht}e^{\pm 2h\e^2}e^{ht_0}\frac{\mu_p(N^-)}{\mu_p(V^-)}\\
&=e^{\pm 26h\e}e^{ht_0}e^{-ht}\frac{\e^2\mu_p(N^-)}{\b\mu_p(V^-)}.
\end{aligned}
\end{equation*}
where in the third equality we used the fact that $c_{\xi'\eta'}$ passes through a point in $V^\b$, within a distance $4\e$ from $q'$, and in the fourth equality we used Corollary \ref{equicon} and $b_{\c^{-1}\xi}(q',p)=t_0$.
\end{proof}

Finally, we combine scaling and mixing properties of Knieper measure to obtain the following asymptotic estimates.
\begin{proposition}\label{asymptotic}
We have
\begin{equation*}
\begin{aligned}
e^{-30h\e} \lesssim &\frac{\#\C^*(t,\e^2,\b)e^{ht_0}}{\mu_p(V^-)\mu_p(N^+)e^{ht}}\frac{1}{2\b}
\lesssim e^{30h\e}(1+\frac{8\e^2}{\b}),\\
e^{-30h\e} \lesssim &\frac{\#\C(t,\e^2,\b)e^{ht_0}}{\mu_p(V^-)\mu_p(N^+)e^{ht}}\frac{1}{2\b}
\lesssim e^{30h\e}(1+\frac{8\e^2}{\b}).
\end{aligned}
\end{equation*}
\end{proposition}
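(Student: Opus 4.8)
The plan is to package the counting estimate for $\#\C^*(t,\e^2,\b)$ by writing down a measure-theoretic identity relating the flow box $V^\b$ to the disjoint union of its translates that intersect $N^{\e^2}$ after flowing, and then feeding that into the mixing property of the Knieper measure $m$. Concretely, I would first observe that for $\c\in \C^*(t,\e^2,\b)$ the intersection component $N^\c:=H^{-1}(N^-\times \c V^+\times [-\e^2,\e^2])$ is contained in $N^{\e^2}\cap \phi^{-(t+4\e^2)}\c V^{\b+8\e^2}$ (Lemma~\ref{depth1}) and, conversely, that every such $\c$ actually produces a nonempty intersection so the family $\{N^\c\}_{\c\in \C^*(t,\e^2,\b)}$ is an essentially disjoint collection of pieces sitting inside a single fundamental-domain copy. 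Summing $m(N^\c)$ over $\c$ and applying Lemma~\ref{tscaling1} gives
\[
\sum_{\c\in \C^*(t,\e^2,\b)}m(N^\c)=e^{\pm 26h\e}e^{ht_0}e^{-ht}\frac{\e^2\mu_p(N^-)}{\b\mu_p(V^-)}\,\#\C^*(t,\e^2,\b)\cdot m(V^\b),
\]
so the whole problem reduces to estimating the left-hand sum geometrically.

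Next I would estimate that sum by mixing. The union $\bigcup_\c N^\c$ is, up to the boundary sets of measure zero guaranteed by \eqref{e:boundary}, the set of vectors $w$ in the flow box over $N$ whose forward orbit lands in $\bigcup_\c \c V^\b$ — i.e. $\pi_*$ of $N^{\e^2}\cap \phi^{-t}(\bigcup_\c \c V^\b)$ projected to $SM$, which is $\lm$ of $\lN^{\e^2}\cap \phi^{-t}\lV^\b$. The mixing of $\lm$ (cited from \cite{Ba}) then gives $\lm(\lN^{\e^2}\cap \phi^{-t}\lV^\b)\sim \lm(\lN^{\e^2})\lm(\lV^\b)/\lm(SM)$ as $t\to\infty$; normalizing $\lm$ to be a probability measure and using $m(\lN^{\e^2})=2\e^2\mu_p(N^-)\mu_p(N^+)$ (from the product formula defining $m$, modulo the $e^{\pm 4h\e}$ distortion of the Gromov product already controlled in \eqref{e:smallBuse}) yields the two-sided bound. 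Combining with the displayed identity, the factors $m(V^\b)$, $\mu_p(N^-)$, $\mu_p(V^-)$ cancel, leaving exactly
\[
\frac{\#\C^*(t,\e^2,\b)e^{ht_0}}{\mu_p(V^-)\mu_p(N^+)e^{ht}}\frac{1}{2\b}=e^{\pm 26h\e}\cdot(\text{mixing error}),
\]
and absorbing the mixing error asymptotically into the constant $e^{\pm 30h\e}$ (with the upper side picking up the extra $(1+8\e^2/\b)$ because $V^{\b+8\e^2}$ rather than $V^\b$ appears in the containment of Lemma~\ref{depth1}).

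For the second inequality, concerning $\#\C(t,\e^2,\b)$ rather than $\#\C^*(t,\e^2,\b)$, I would use Lemma~\ref{expansion}: for every $\rho>0$ there is $T_2$ with $\C_{-\rho}(t,\e^2,\b)\subset \C^*(t,\e^2,\b)\subset \C(t,\e^2,\b)$ for $t\ge T_2$. The chain $\#\C_{-\rho}(t,\e^2,\b)\le \#\C^*(t,\e^2,\b)\le \#\C(t,\e^2,\b)$, together with the fact that $\C_{-\rho}$ is the same kind of object built from the slightly shrunken boxes $B_{-\rho}N$, $B_{-\rho}V$, lets me run the first two paragraphs verbatim for $\C_{-\rho}$ and then let $\rho\to 0$; the continuity of $\mu_p$ on the (boundary-null) box sides from \eqref{e:boundary} makes $\mu_p((B_{-\rho}N)^{\pm})\to \mu_p(N^{\pm})$, so the sandwich forces the same asymptotics for $\#\C(t,\e^2,\b)$. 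The main obstacle I anticipate is the bookkeeping needed to justify that $\{N^\c\}$ is genuinely a disjoint family whose union is (up to $m$-null sets) the set whose measure mixing computes — this is where the hypotheses $\e<\tfrac14\inj(M)$, the local product structure of the flow boxes, and the boundary-measure-zero conditions \eqref{e:boundary} all have to be invoked carefully; the scaling and mixing inputs themselves are already black-boxed in Lemma~\ref{tscaling1} and \cite{Ba}, so the analytic content is light once the combinatorial geometry is pinned down.
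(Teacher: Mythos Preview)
Your overall strategy matches the paper's: scale each $N^\c$ via Lemma~\ref{tscaling1}, sandwich the sum $\sum_\c m(N^\c)$ between mixing-tractable sets, apply mixing of $\lm$, and then pass from $\C^*$ to $\C$ via Lemma~\ref{expansion}. However, two steps are not pinned down correctly.

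First, for the lower bound on $\#\C^*(t,\e^2,\b)$, your assertion that $\bigcup_{\c\in\C^*} N^\c$ coincides with $\lN^{\e^2}\cap\phi^{-t}\lV^\b$ up to $m$-null sets is not justified by \eqref{e:boundary} alone. A vector $w\in N^{\e^2}\cap\phi^{-t}\c V^\b$ certainly lies in $H^{-1}(N^-\times\c V^+\times[-\e^2,\e^2])$, but the corresponding $\c$ is only known to lie in $\C(t,\e^2,\b)$, not in $\C^*(t,\e^2,\b)$; for $\c\notin\C^*$ there is no $N^\c$ in your family. The paper handles this by invoking Lemma~\ref{expansion} with the \emph{shrunken} boxes: for $t$ large one has
\[
(B_{-\rho}\lN)^{\e^2}\cap\phi^{-t}(B_{-\rho}\lV)^\b\ \subset\ \bigcup_{\c\in\C^*(t,\e^2,\b)}\lN^\c,
\]
because every $\c$ contributing to the left-hand side lies in $\C_{-\rho}(t,\e^2,\b)\subset\C^*(t,\e^2,\b)$. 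Mixing then gives a lower bound involving $m((B_{-\rho}N)^{\e^2})m((B_{-\rho}V)^\b)$, and only \emph{afterwards} does one let $\rho\to 0$ using \eqref{e:boundary}. You have relocated this $\rho$-argument to the second inequality, but it is needed already here.

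Second, for the upper bound on $\#\C(t,\e^2,\b)$, the chain $\C_{-\rho}\subset\C^*\subset\C$ you write down yields only $\#\C_{-\rho}\le\#\C$, which is the wrong direction. What is required is $\C(t,\e^2,\b)\subset\C^*_\rho(t,\e^2,\b)$, where $\C^*_\rho$ denotes $\C^*$ built from the \emph{enlarged} boxes $B_\rho N$, $B_\rho V$; this again follows from Lemma~\ref{expansion}, applied after replacing $N,V$ by $B_\rho N, B_\rho V$. One then feeds the already-established first inequality (for $B_\rho N$, $B_\rho V$) into this containment and lets $\rho\searrow 0$. In short, the paper shrinks to get the lower bound for $\C^*$ and enlarges to get the upper bound for $\C$; your proposal uses shrinking only, and places it where enlarging is needed.
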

\begin{proof}
By Lemmas \ref{expansion} and \ref{depth1}, for any $0<\rho<\theta$ and $t$ large enough, we have
$$(B_{-\rho}\lN)^{\e^2}\cap \phi^{-t}(B_{-\rho}\lV)^{\b}\subset \bigcup_{\c\in \C^*(t,\e^2,\b)}\lN^\c \subset \lN^{\e^2} \cap \phi^{-(t+4\e^2)}\lV^{\b+8\e^2}.$$
By Lemma \ref{tscaling1}, $$\frac{m(N^\c)}{m(V^\b)}=e^{\pm 26h\e}e^{ht_0}e^{-ht}\frac{\e^2\mu_p(N^-)}{\b\mu_p(V^-)}.$$
Estimating similarly to \eqref{e:smallBuse},
\begin{equation}\label{e:N}
\begin{aligned}
m(N^{\e^2})=2\e^2\int_{N^-}\int_{N^+}e^{h\b_p(\xi,\eta)}d\mu_p(\xi)d\mu_{p}(\eta)=2\e^2e^{\pm 4h\e}\mu_p(N^-)\mu_p(N^+).
\end{aligned}
\end{equation}
Thus we have
\begin{equation*}
\begin{aligned}
e^{-26h\e}\lm((B_{-\rho}\lN)^{\e^2}\cap \phi^{-t}(B_{-\rho}\lV)^{\b})&\le \#\C^*(t,\e^2,\b)e^{ht_0}e^{-ht}\frac{\e^2\mu_p(N^-)}{\b\mu_p(V^-)}\lm(\lV^\b)\\
&\le e^{26h\e}\lm(\lN^{\e^2} \cap \phi^{-(t+4\e^2)}\lV^{\b+8\e^2}).
\end{aligned}
\end{equation*}
Dividing by $\lm(\lN^{\e^2})\lm(\lV^\b)$ and using mixing of $\lm$, we get
\begin{equation*}
\begin{aligned}
e^{-26h\e}\frac{m((B_{-\rho}N)^{\e^2})m((B_{-\rho}V)^{\b})}{m(N^{\e^2})m(V^\b)}& \lesssim \frac{\#\C^*(t,\e^2,\b)e^{ht_0}}{e^{ht}m(N^{\e^2})}\frac{\e^2\mu_p(N^-)}{\b\mu_p(V^-)}\\
&\lesssim e^{26h\e}\frac{m(V^{\b+8\e^2})}{m(V^\b)}.
\end{aligned}
\end{equation*}
By \eqref{e:boundary}, letting $\rho \to 0$, we obtain
\begin{equation*}
\begin{aligned}
e^{-26h\e} \lesssim \frac{\#\C^*(t,\e^2,\b)e^{ht_0}}{e^{ht}m(N^{\e^2})}\frac{\e^2\mu_p(N^-)}{\b\mu_p(V^-)}
\lesssim e^{26h\e}(1+\frac{8\e^2}{\b}).
\end{aligned}
\end{equation*}
Thus by \eqref{e:N}
\begin{equation}\label{e:sim1}
\begin{aligned}
e^{-30h\e} \lesssim \frac{\#\C^*(t,\e^2,\b)e^{ht_0}}{\mu_p(V^-)\mu_p(N^+)e^{ht}}\frac{1}{2\b}
\lesssim e^{30h\e}(1+\frac{8\e^2}{\b}).
\end{aligned}
\end{equation}

To prove the second equation, we consider $\rho>0$. Then by Lemma \ref{expansion}, $\C^*(t,\e^2,\b)\subset \C(t,\e^2,\b) \subset \C^*_\rho(t,\e^2,\b).$ By \eqref{e:sim1},
\begin{equation*}
\begin{aligned}
e^{-30h\e}& \lesssim \frac{\#\C^*(t,\e^2,\b)e^{ht_0}}{\mu_p(V^-)\mu_p(N^+)e^{ht}}\frac{1}{2\b}\le \frac{\#\C(t,\e^2,\b)e^{ht_0}}{\mu_p(V^-)\mu_p(N^+)e^{ht}}\frac{1}{2\b}\\
&\le \frac{\#\C^*_\rho(t,\e^2,\b)e^{ht_0}}{\mu_p(V^-)\mu_p(N^+)e^{ht}}\frac{1}{2\b}\\
&\lesssim e^{30h\e}(1+\frac{8\e^2}{\b})\frac{\mu_p((B_\rho V)^-)\mu_p((B_\rho N)^+)}{\mu_p(V^-)\mu_p(N^+)}.
\end{aligned}
\end{equation*}
Letting $\rho\searrow 0$ and by \eqref{e:boundary}, we get the second equation in the proposition.
\end{proof}

\subsection{Integration}
Let $V\subset S_yX, N\subset S_xX$ be as above, and $0\le a<b$. Let $n(a,b,V,N^0)$ denote the number of connected components at which $\phi^{[-b,-a]}\lV$ intersects $\lN^0$. $n_t(V^\b,N^\a)$ (resp. $n_t(V,N^\a)$) denotes the number of connected components at which $\phi^{-t}\lV^\b$  (resp. $\phi^{-t}\lV$) intersects $\lN^\a$.

\begin{lemma}\label{asy1}
We have
$$n_t(V,N^\e)\lesssim e^{-ht_0}\mu_p(V^-)\mu_p(N^+)e^{ht} 2\e (1+8\e)e^{30 h\e}.$$
\end{lemma}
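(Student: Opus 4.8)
The plan is to bound $n_t(V,N^\e)$ by a count of deck transformations and then feed that count into Proposition~\ref{asymptotic} with $\b=\e$. First I would lift the picture to $SX$, fixing the lifts $V\subset S_yX$ and $N^\e\subset S_xX$. Since $\e<\inj(M)/4$ and $\diam \pi N^\e<4\e$ by Lemma~\ref{diameter}, the covering projection $\pr$ is injective on $N^\e$, so every intersection point of $\phi^{-t}\lV$ with $\lN^\e$ lifts to a point of $N^\e\cap\phi^{-t}\c V$ for a unique $\c\in\C$; moreover, for $t$ large, the contraction along the flow together with the local product structure of the flow box forces each nonempty $N^\e\cap\phi^{-t}\c V$ to be a single sheet. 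Hence
\[
n_t(V,N^\e)\ \le\ \#\{\c\in\C:\ N^\e\cap\phi^{-t}\c V\neq\emptyset\}.
\]

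Next I would run the comparisons already prepared in Section~4. Lemma~\ref{intersect1}, applied with $\b=\e$ (it is valid for all $t$), gives
\[
\#\{\c\in\C:\ N^\e\cap\phi^{-t}\c V\neq\emptyset\}\ \le\ \#\{\c\in\C:\ N^{\e^2}\cap\phi^{-t}\c V^\e\neq\emptyset\}\ =\ \#\C(t,\e^2,\e).
\]
Since $\frac{\e^2}{100}\le\e^2\le\frac{3\e}{2}$ and $\frac{\e^2}{100}\le\e\le\frac{3\e}{2}$, the value $\b=\e$ is admissible in Proposition~\ref{asymptotic}, whose second estimate yields
\[
\frac{\#\C(t,\e^2,\e)\,e^{ht_0}}{\mu_p(V^-)\mu_p(N^+)\,e^{ht}}\cdot\frac{1}{2\e}\ \lesssim\ e^{30h\e}\Bigl(1+\tfrac{8\e^2}{\e}\Bigr)\ =\ e^{30h\e}(1+8\e),
\]
i.e.\ $\#\C(t,\e^2,\e)\lesssim e^{-ht_0}\mu_p(V^-)\mu_p(N^+)e^{ht}\,2\e(1+8\e)e^{30h\e}$. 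Combining this with the two displays above and using that $a_t\le b_t$ with $b_t\lesssim c_t$ implies $a_t\lesssim c_t$ completes the proof.

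The only delicate point is the first step: making precise that a single deck transformation cannot account for more than one intersection ``point''. I expect to handle it just as in the proof of Lemma~\ref{intersect2} --- via the comparison theorem, once $t$ is large the portion of $\phi^{-t}\c V$ lying in $N^\e$ is uniformly contracted in the flow and (strong) stable directions, so by the product structure of Lemma~\ref{diameter} it is connected, and injectivity of $\pr$ on $N^\e$ then rules out further contributions from the same $\c$. Everything after that is a direct substitution, so no new estimates are required.
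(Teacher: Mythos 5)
Your proof is correct and follows essentially the same route as the paper: Lemma~\ref{intersect1} (with $\a=\e^2$, $\b=\e$) to pass from $n_t(V,N^\e)$ to $\#\C(t,\e^2,\e)$, then Proposition~\ref{asymptotic} with $\b=\e$. The only difference is that you invoke the second (``$\#\C$'') estimate in Proposition~\ref{asymptotic} while the paper uses the first (``$\#\C^*$'') one after identifying $n_t(V^\e,N^{\e^2})$ with $\#\C^*(t,\e^2,\e)$; these give the identical upper bound, and your extra care in justifying the identification of intersection points in $SM$ with deck transformations is a slight improvement over the paper's tacit step.
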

\begin{proof}
Setting $\a=\e^2$ in Lemma \ref{intersect1},
$$n_t(V,N^{\e})\le n_t(V^\e,N^{\e^2}).$$
Setting $\b=\e$ in Proposition \ref{asymptotic},
\begin{equation*}
\begin{aligned}
&n_t(V^\e,N^{\e^2})=\#\C(t,\e^2,\e)
\lesssim  &e^{-ht_0}\mu_p(V^-)\mu_p(N^+)e^{ht} 2\e (1+8\e)e^{30 h\e}.
\end{aligned}
\end{equation*}
\end{proof}

\begin{lemma}\label{asy2}
We have
$$n_t(V,N^\e)\gtrsim e^{-ht_0}\mu_p(V^-)\mu_p(N^+)e^{ht} 2\e (1-2\e)e^{-30 h\e}.$$
\end{lemma}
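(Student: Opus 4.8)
The plan is to bound $n_t(V,N^\e)$ from below by $\#\C^*(t,\e^2,\b)$ for the single depth $\b:=\e-2\e^2$, and then to feed this into the lower estimate of Proposition~\ref{asymptotic}. This is the mirror image of the proof of Lemma~\ref{asy1}: there the bigger box $N^\e$ was compared with the thickened box $N^{\e^2}$ to get an \emph{upper} bound through $\C^*$; here the point is that membership in $\C^*(t,\e^2,\b)$ already forces a genuine intersection of $\phi^{-t}\underline V$ with $\underline N^\e$, so $\#\C^*(t,\e^2,\b)$ is a \emph{lower} bound for $n_t(V,N^\e)$. Note that $\tfrac{\e^2}{100}\le\b\le\tfrac{3\e}{2}$ for $\e$ small, so $\b$ is an admissible depth, and the happy coincidence $\e-2\e^2=\e(1-2\e)$ is exactly what will make the constants come out.

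The heart of the argument will be the containment $\C^*(t,\e^2,\b)\subseteq\{\c\in\C:\ N^\e\cap\phi^{-t}\c V\neq\emptyset\}$. To prove it I would take $\c\in\C^*(t,\e^2,\b)$, so $N^{\e^2}\cap\phi^{-t}\c V^\b\neq\emptyset$, and use Lemma~\ref{intersection3} to extract a vector $w$ with $w^-\in N^-$, $w^+\in\c V^+$, $s(w)\in[-\e^2,\e^2]$, and $b_{w^-}^\c-t+t_0\in s(w)+[-\b,\b]\subseteq[-\b-\e^2,\b+\e^2]$. The trick is \emph{not} to pick an arbitrary vector of $V$, but to exploit that $\c\in\C^*$ gives $\c^{-1}(w^-)\in\c^{-1}N^-\subseteq V^-$, so one may choose $w'\in V$ with $(w')^-=\c^{-1}(w^-)$ (then automatically $(w')^+\in V^+$). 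Setting $u:=\phi^{-t}\c w'\in\phi^{-t}\c V$, one gets $u^-=\c(w')^-=w^-\in N^-$ and $u^+=\c(w')^+\in\c V^+\subseteq N^+$, while the same cocycle and $\C$-equivariance computation used in Lemma~\ref{intersection3} gives $s(u)=s(w')+b_{w^-}^\c-t$. Since $w'\in V\subseteq V^{\e^2}$ by Lemma~\ref{nhd}, $s(w')-t_0\in[-\e^2,\e^2]$, hence
\[
s(u)=\bigl(s(w')-t_0\bigr)+\bigl(b_{w^-}^\c-t+t_0\bigr)\in[-\e^2,\e^2]+[-\b-\e^2,\b+\e^2]=[-\b-2\e^2,\b+2\e^2]=[-\e,\e],
\]
so $u\in H^{-1}(N^-\times N^+\times[-\e,\e])=N^\e$ and the intersection is nonempty.

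Granting this containment, I would then observe that all the flow boxes involved have diameter less than $4\e<\inj(M)$ in $X$, so distinct $\c$ contribute distinct intersection points of $\phi^{-t}\underline V$ with $\underline N^\e$; therefore $n_t(V,N^\e)\ge\#\{\c\in\C:\ N^\e\cap\phi^{-t}\c V\neq\emptyset\}\ge\#\C^*(t,\e^2,\b)$. Applying the lower bound of Proposition~\ref{asymptotic} with $\b=\e-2\e^2$ then gives
\[
n_t(V,N^\e)\gtrsim e^{-30h\e}\,e^{-ht_0}\mu_p(V^-)\mu_p(N^+)e^{ht}\cdot 2(\e-2\e^2)=e^{-ht_0}\mu_p(V^-)\mu_p(N^+)e^{ht}\,2\e(1-2\e)e^{-30h\e},
\]
which is precisely the asserted estimate.

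The step I expect to be the main obstacle is the containment in the second paragraph: one must certify that the intersection with the \emph{fat} flow box $V^\b$ guaranteed by $\c\in\C^*(t,\e^2,\b)$ is actually witnessed by an honest vector of the \emph{thin} set $V\subseteq S_yX$, lying in the only slightly wider box $N^\e$ rather than in $N^{\e^2}$. The delicate point is the choice of backward endpoint of the witness: matching it to $w^-$ — rather than picking an arbitrary point of $\c^{-1}N^-$ and correcting the resulting error via the equicontinuity Lemma~\ref{intersection1} — keeps the flow-direction discrepancy at exactly $2\e^2=\e-\b$, which is what makes the sharp factor $(1-2\e)$ (instead of a larger loss such as $(1-3\e)$) fall out. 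Everything else is routine bookkeeping with the Busemann cocycle identities already recorded, the inclusion $V\subseteq V^{\e^2}$ of Lemma~\ref{nhd}, and the defining inclusions of $\C^*$.
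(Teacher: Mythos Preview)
Your argument is correct, but it is genuinely different from the paper's. The paper does not use the $\C^*$ condition directly at this stage; instead it invokes Lemma~\ref{intersect2} with the shrunken set $B_{-a}N$ in place of $N$ (so that $\a+\b+\e^2=\e$ and $B_a(B_{-a}N)\subset N$), obtaining $n_t(V,N^\e)\ge n_t(V^{\e-2\e^2},(B_{-a}N)^{\e^2})=\#\C(t,\e^2,\b)$ for the shrunken box. It then applies the lower bound of Proposition~\ref{asymptotic} to $B_{-a}N$ and lets $a\to 0$ using the boundary-null condition~\eqref{e:boundary}.

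Your route bypasses both Lemma~\ref{intersect2} (hence the comparison theorem) and the limiting step $a\to 0$: by exploiting $\c^{-1}N^-\subset V^-$ you can choose the witness $w'\in V$ with the \emph{same} backward endpoint $\c^{-1}(w^-)$, which pins down $s(u)$ exactly and makes the containment $\C^*(t,\e^2,\e-2\e^2)\subset\{\c:N^\e\cap\phi^{-t}\c V\neq\emptyset\}$ elementary. This is cleaner and yields the constant $(1-2\e)$ without any shrinking. The trade-off is that the paper's approach stays at the level of $\C$ and reuses the already-established geometric Lemma~\ref{intersect2}, keeping the arguments for Lemmas~\ref{asy1} and~\ref{asy2} parallel; your approach instead uses the finer product description of Lemma~\ref{intersection3} and the $\C^*$ inclusions, which are in any case needed for Proposition~\ref{asymptotic}.
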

\begin{proof}
Setting $\a=\e^2$, $\b=\e-2\e^2$ in Lemma \ref{intersect2}, for any $a>0$, we have
$$n_t(V,N^{\e})\ge n_t(V^{\e-2\e^2},(B_{-a}N)^{\e^2})$$
for any $t\ge T_1$ where $T_1$ is provided by Lemma \ref{intersect2}.
Setting $\b=\e-2\e^2$ in Lemma \ref{asymptotic},
\begin{equation*}
\begin{aligned}
n_t(V^{\e-2\e^2},(B_{-a}N)^{\e^2})\gtrsim  &e^{-ht_0}\mu_p(V^-)\mu_p((B_{-a}N)^+)e^{ht}2\e(1-2\e) e^{-30 h\e}.
\end{aligned}
\end{equation*}
Letting $a\to 0$, by \eqref{e:boundary} we obtain the conclusion of the lemma.
\end{proof}

\begin{proposition}\label{key}
There exists $Q>0$ such that
\begin{equation*}
\begin{aligned}
e^{-2Q\e}e^{-ht_0}\mu_p(V^-)\mu_p(N^+)\frac{1}{h}e^{ht} &\lesssim n(0,t,V,N^0)\\
&\lesssim  e^{2Q\e}e^{-ht_0}\mu_p(V^-)\mu_p(N^+)\frac{1}{h}e^{ht}.
\end{aligned}
\end{equation*}
\end{proposition}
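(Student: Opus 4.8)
The plan is to integrate the one-scale estimates of Lemmas \ref{asy1} and \ref{asy2} over a partition of the time interval $[0,t]$ into slabs of width $2\e$, so that $n(0,t,V,N^0)$ is sandwiched between Riemann-type sums that telescope to the claimed expression with the factor $1/h$ coming from summing a geometric series. First I would relate the ``continuous-depth'' count $n(0,t,V,N^0)$ to the slab counts $n_s(V,N^\e)$ (or $n_s(V^\e, N^{\e^2})$, etc.): partition $[0,t]$ into $\lceil t/2\e\rceil$ subintervals $[s_{k-1},s_k]$ of length $\le 2\e$, and observe that a point of intersection of $\phi^{[-t,0]}\lV$ with $(\lN)^0$ lying in the slab $s(\cdot)\in[-s_k,-s_{k-1}]$ corresponds, after flowing, to an intersection point counted by $n_{s_k}(V,N^{\e})$ (up to the harmless adjustments of depth already used repeatedly in Section 4, e.g. in Lemmas \ref{intersect1}--\ref{intersect2}). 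This gives, for $t$ large,
\[
n(0,t,V,N^0)\;\le\;\sum_{k=1}^{\lceil t/2\e\rceil} n_{s_k}(V,N^{\e})\quad\text{and}\quad n(0,t,V,N^0)\;\ge\;\sum_{k=1}^{\lfloor t/2\e\rfloor} n_{s_{k-1}}(V,(B_{-a}N)^{\e})
\]
with appropriate choices of $s_k = 2\e k$.

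Next I would substitute the asymptotics from Lemmas \ref{asy1} and \ref{asy2}. Each term $n_{s_k}(V,N^\e)$ is, for $s_k$ large, at most $e^{-ht_0}\mu_p(V^-)\mu_p(N^+)e^{hs_k}\cdot 2\e(1+8\e)e^{30h\e}$, so the upper sum is bounded by
\[
e^{-ht_0}\mu_p(V^-)\mu_p(N^+)\,2\e(1+8\e)e^{30h\e}\sum_{k}e^{2h\e k}
\;=\;e^{-ht_0}\mu_p(V^-)\mu_p(N^+)\,2\e(1+8\e)e^{30h\e}\,\frac{e^{ht}(1+o(1))}{e^{2h\e}-1}.
\]
Since $\dfrac{2\e}{e^{2h\e}-1}=\dfrac{1}{h}\bigl(1+O(\e)\bigr)$ as $\e\to 0$ (with $\e$ fixed here, this is just a constant of the form $e^{\pm O(\e)}/h$), the upper bound takes the form $e^{2Q\e}e^{-ht_0}\mu_p(V^-)\mu_p(N^+)\frac1h e^{ht}$ for a constant $Q=Q(\e)$ absorbing $30h\e$, $\log(1+8\e)$, and the discrepancy between $2\e/(e^{2h\e}-1)$ and $1/h$; the finitely many initial terms with $s_k$ not yet large contribute a bounded amount, negligible against $e^{ht}$, hence harmless for $\lesssim$. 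The lower bound is symmetric, using Lemma \ref{asy2}, the slab count $n_{s_{k-1}}(V,(B_{-a}N)^\e)$, the factor $2\e(1-2\e)e^{-30h\e}$, and then letting $a\to 0$ via \eqref{e:boundary}; again the tail geometric sum $\sum_k e^{2h\e k}$ dominates, and dropping the last partial slab of width $<2\e$ only costs another $e^{\pm O(\e)}$.

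The main obstacle I expect is bookkeeping the depth adjustments carefully enough that the slab counts genuinely bound $n(0,t,V,N^0)$ from both sides: flowing an intersection point by time $s_k$ shifts its $s$-coordinate and one must check it lands inside $N^\e$ (resp. inside $(B_{-a}N)^\e$ from within) rather than just near it, exactly the kind of $\pm\e^2$ and $\pm 8\e^2$ slack exploited in Lemmas \ref{nhd}, \ref{intersect1}, \ref{intersect2}, and \ref{depth1}. Once one fixes the slab endpoints $s_k=2\e k$ and uses those lemmas to pass between $n(0,t,V,N^0)$, the slabs, and the $\C^*(t,\e^2,\b)$-counts, the rest is the geometric-series computation above, and the constant $Q$ depends only on $\e$ (through $h$, which is fixed) as claimed.
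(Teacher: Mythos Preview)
Your proposal is correct and follows essentially the same route as the paper: partition $[0,t]$ into slabs of width $2\e$, apply the one-scale estimates of Lemmas~\ref{asy1} and~\ref{asy2} on each slab, and sum. The paper compares the resulting Riemann sum directly to $\int e^{hs}\,ds$, while you sum the geometric series and then use $2\e/(e^{2h\e}-1)=h^{-1}e^{\pm O(\e)}$; these are equivalent. The paper also handles the initial terms by the one-line observation $n(0,t,V,N^0)\sim n(b,t,V,N^0)$ for any fixed $b>0$, which is your ``finitely many initial terms are negligible'' remark.

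One simplification you are missing: your bookkeeping worries about depth adjustments and the $(B_{-a}N)$ safeguard in the lower bound are unnecessary, because $N^\e=\phi^{[-\e,\e]}N^0$ exactly (both equal $H^{-1}(N^-\times N^+\times[-\e,\e])$), so one has the clean identity
\[
n_t(V,N^\e)=n(t-\e,t+\e,V,N^0).
\]
With $s_k=2k\e$ the intervals $[s_k-\e,s_k+\e]$ tile $[0,t]$ up to endpoints, and the slab decomposition is exact rather than approximate; no $\pm\e^2$ slack or shrinking to $B_{-a}N$ is needed at this stage.
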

\begin{proof}
It is clear that for any $b>0$,
\begin{equation}\label{e:integral}
\begin{aligned}
n(0,t,V,N^0)\thicksim n(b,t,V,N^0).
\end{aligned}
\end{equation}
By Lemmas \ref{asy1} and \ref{asy2}, we can choose $b$ large enough such that
$$n(t-\e,t+\e,V,N^0)=n_t(V,N^\e)= e^{\pm Q\e}e^{-ht_0}\mu_p(V^-)\mu_p(N^+)e^{ht} 2\e$$
for some $Q>2h$ large enough and for all $t\ge b$.
Let $t_k=b+\e+2k\e$, then
\begin{equation*}
\begin{aligned}
n(b, t,V,N^0)&\le \sum_{k=0}^{[\frac{t-b}{2\e}]+1}n(t_k-\e,t_k+\e,V,N^0)\\
&\le  e^{Q\e}e^{-ht_0}\mu_p(V^-)\mu_p(N^+)\sum_{k=0}^{[\frac{t-b}{2\e}]+1}2\e e^{ht_k} \\
&\le e^{Q\e}e^{-ht_0}\mu_p(V^-)\mu_p(N^+)\int_{b-\e}^{t+2\e}e^{hs}ds\\
&=  e^{Q\e}e^{-ht_0}\mu_p(V^-)\mu_p(N^+)\frac{1}{h}(e^{h(t+2\e)}-e^{h(b-\e)})\\
&\le e^{2Q\e}e^{-ht_0}\mu_p(V^-)\mu_p(N^+)\frac{1}{h}e^{ht}
\end{aligned}
\end{equation*}
and
\begin{equation*}
\begin{aligned}
n(b, t,V,N^0)&\ge \sum_{k=0}^{[\frac{t-b}{2\e}]-1}n(t_k-\e,t_k+\e,V,N^0)\\
&\ge e^{- Q\e}e^{-ht_0}\mu_p(V^-)\mu_p(N^+)\sum_{k=0}^{[\frac{t-b}{2\e}]-1}2\e e^{ht_k} \\
&\ge e^{-Q\e}e^{-ht_0}\mu_p(V^-)\mu_p(N^+)\int_{b+\e}^{t-2\e}e^{hs}ds\\
&\ge  e^{-Q\e}e^{-ht_0}\mu_p(V^-)\mu_p(N^+)\frac{1}{h}(e^{h(t-2\e)}-e^{h(b+\e)})\\
&\gtrsim e^{-2Q\e}e^{-ht_0}\mu_p(V^-)\mu_p(N^+)\frac{1}{h}e^{ht}.
\end{aligned}
\end{equation*}
The lemma then follows from \eqref{e:integral}.
\end{proof}

\subsection{Summing over the regular partition-cover}
Denote $$a_t(x,y):=\#\{\c\in \C: \c y\in B(x,t)\}$$
and $$a^{1}_t(x,y):=\#\{\c\in \C: \c y\in B(x,t), \text{\ and\ }c_{x,\c y} \text{\ is singular}\}.$$
It is easy to see that $b_t(x)=\int_\F a_{t}(x,y)d  \text{Vol} (y)$. In the following, we give an asymptotic estimates of $a_t(x,y)$.
\begin{lemma}\label{singular}
There exist $C>0$ and $0<h'<h$, such that for any $x,y\in \F$,
$$a^{1}_t(x,y) \le Ce^{h't}.$$
\end{lemma}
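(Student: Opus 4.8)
The plan is to bound the singular orbit-counting function by the volume growth of the singular set, which Knieper showed has strictly smaller entropy than $h$. First I would recall the key fact from Knieper's work (\cite{Kn2}, see also \cite[Chapter 5]{Kn3}): the topological entropy of the geodesic flow restricted to $\Sing$, call it $h'$, satisfies $h' < h$. This is precisely the statement that the singular set carries less dynamical complexity than the full unit tangent bundle, and it is the reason the unique MME gives full measure to $\Reg$. I would then observe that if $c_{x,\c y}$ is singular, then the geodesic segment from $x$ to $\c y$ stays (up to bounded error, since $x, y$ lie in the compact fundamental domain $\F$) close to the projection of the singular set in $SM$; more precisely, the initial vector of such a geodesic lies within a controlled distance of $\Sing$ in $SX$, by continuity of the rank function along geodesics and compactness of $\F$.

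The core estimate is then a standard separated-set / volume argument. For the set $\{\c \in \C : \c y \in B(x,t),\ c_{x,\c y}\text{ singular}\}$, the points $\c y$ are $2\,\inj(M)$-separated in $X$ (since the $\c$ are distinct deck transformations and $y$ lies in $\F$), and each corresponds to a geodesic segment of length $\le t$ starting in a fixed neighborhood of $\Sing \cap S_xX$. Lifting to $SM$, these segments project to orbit segments of the geodesic flow that shadow the singular set. Using Knieper's estimate on the volume growth of tubular neighborhoods of $\Sing$ — equivalently, the fact that $\#\{\c : d(x,\c y) \le t,\ \c y \text{ in the "singular direction"}\} \le C e^{h' t}$ for some $h' < h$, which is exactly \cite[Theorem 5.3.5]{Kn3} or the relevant proposition in \cite{Kn2} — I would conclude the bound. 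Since the constant and $h'$ depend only on $M$ (through $\Sing$, $\inj(M)$, and $\diam \F$) and not on the particular $x, y \in \F$, the uniformity claimed in the lemma follows.

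The main obstacle I anticipate is making precise the passage from "$c_{x,\c y}$ is singular" to a genuine volume/entropy bound that is uniform in $x,y$: one must be careful that a geodesic between two points can fail to be singular even if it passes near $\Sing$, and conversely one needs the correct notion (used by Knieper) of the set of geodesics that are "singular" or "asymptotically singular" whose growth rate is $h'$. I would handle this by invoking Knieper's structural results directly: he proves that the set $\{v \in SM : v^- \text{ or } v^+ \in \Lambda_{\Sing}\}$-type sets have exponential growth rate at most $h'$, where the relevant boundary set is closed and $\C$-invariant. Since a singular connecting geodesic $c_{x,\c y}$ has both endpoints conjugate under $\C$ to points realized by singular geodesics, the count $a^1_t(x,y)$ is dominated by such a growth rate. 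The compactness of $\F$ absorbs all the "up to bounded error" slack into the constant $C$, giving the stated uniform bound $a^1_t(x,y) \le C e^{h' t}$.
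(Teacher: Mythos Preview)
Your proposal identifies the correct key input --- Knieper's theorem that $h_\top(\Sing)<h$ --- and this is exactly what the paper uses. The implementation in the paper, however, is more direct than what you sketch. The paper decomposes $B(x,t)$ into annular shells of width $\e<\inj(M)/5$; for distinct $\c_1 y,\c_2 y$ lying in a single shell $B(x,s)\setminus B(x,s-\e)$ with $c_{x,\c_1 y}$ and $c_{x,\c_2 y}$ singular, the initial tangent vectors $\dot c_{x,\c_1 y}(0)$ and $\dot c_{x,\c_2 y}(0)$ lie in $\Sing$ and are $(t,\e)$-separated under the geodesic flow (the endpoints are at least $2\inj(M)>\e$ apart while both sit in a thin shell). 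The very definition of topological entropy then bounds the shell count by $C_1 e^{h' t}$ for any fixed $h'\in(h_\top(\Sing),h)$, and summing the resulting geometric series over the shells $t_i=i\e$ gives the lemma.

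Your ``main obstacle'' paragraph is a red herring: since $c_{x,\c y}$ is by hypothesis \emph{exactly} singular, its unit tangent lies in the closed $\phi^t$-invariant set $\Sing$ for all time, so no shadowing, tubular-neighborhood, or boundary-set $\Lambda_{\Sing}$ argument is needed. Also, the $2\inj(M)$-separation of the points $\c y$ in $X$ that you invoke is not by itself the dynamical $(t,\e)$-separation that links to entropy; the annular shell decomposition is precisely the device that converts one into the other. Your outline would work once this is made explicit, but as written it routes through unnecessary structure.
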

\begin{proof}
Given any $\e<\inj M/5$ and $t>0$, let $\c_1\neq \c_2\in \C$ be such that $\c_1y,\c_2 y\in B(x,t)\setminus B(x,t-\e)$, and both $c_{x,\c_1 y}$ and $c_{x,\c_2 y}$ are singular. Then it is easy to see that $\dot c_{x,\c_1 y}$ and $\dot c_{x,\c_2 y}$ are $(t,\e)$-separated. By a result of Knieper \cite[Theorem 1.1]{Kn2}, the topological entropy of the singular set $h_\top(\Sing)$ is strictly smaller than $h$. It follows that the number of $\c\in \C$ as above is less than $C_1e^{h't}$ for some $C_1>0$ and $h_\top(\Sing)< h'<h$.

Let $t_i=i \e$, then
$$a^{1}_t(x,y) \le \sum_{i=1}^{[t/\e]+1} C'e^{h't_i}= \frac{C'}{\e}\sum_{i=1}^{[t/\e]+1}\e e^{h't_i}\le \frac{C'}{\e}\int_{0}^{t+\e}e^{h's}ds\le Ce^{h't}$$
for some $C>0$.
\end{proof}

Denote by
$$a(0,t, x,y, \cup_{j=n+1}^\infty N_j):=\#\{\c\in \C: \c y\in \pi\phi^{[0,t]}\cup_{j=n+1}^\infty N_j\},$$
and similarly,
$$a(0,t, x,y, \cup_{i=m+1}^\infty V_i):=\#\{\c\in \C: \c^{-1} x\in \pi\phi^{[-t,0]}\cup_{i=m+1}^\infty V_i\}.$$

\begin{lemma}\label{nonuniform}
There exists $C>0$ such that
$$\limsup_{t\to \infty}e^{-ht}a(0,t, x,y, \cup_{j=n+1}^\infty N_j)\le C\cdot \mu_p((\cup_{j=n+1}^\infty N_j)^+),$$
and
$$\limsup_{t\to \infty}e^{-ht}a(0,t, x,y, \cup_{i=m+1}^\infty V_i)\le C\cdot \mu_p((\cup_{i=m+1}^\infty V_i)^-).$$
\end{lemma}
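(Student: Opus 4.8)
The plan is to prove the first inequality; the second is entirely symmetric, obtained by interchanging the roles of $x$ and $y$, replacing $\c$ by $\c^{-1}$, and noting that the Patterson--Sullivan measures based at $y$ and at $p$ are comparable with constant $e^{h\,\diam\F}$ since $d(y,p)\le\diam\F$. First I would unwind the definition: for $\c\in\C$ one has $\c y\in\pi\phi^{[0,t]}\bigcup_{j>n}N_j$ exactly when $d(x,\c y)\le t$ and the initial vector $v_\c\in S_xX$ of the (unique, by Cartan--Hadamard) geodesic from $x$ to $\c y$ lies in $A:=\bigcup_{j>n}N_j\subset\Reg\cap S_xX$. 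So the quantity to estimate is $\#\{\c\in\C:\ d(x,\c y)\le t,\ v_\c\in A\}$, and the goal is a bound $\lesssim C\,e^{ht}\mu_p(A^+)$ with $C$ independent of $x,y\in\F$.

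The engine is the Shadowing Lemma (Lemma~\ref{shadow}) at the basepoint $p=x$, combined with a bounded-multiplicity property of shadows. Put $\rho:=\max\{R,\diam\F\}$ and, for $\c\in\C$, let $\mathrm{Sh}_\c:=\{w^+:w\in S_xX,\ c_w\cap B(\c x,\rho)\neq\emptyset\}$ be the shadow of $B(\c x,\rho)$ seen from $x$. Three facts would be recorded. (i) Since $d(\c x,\c y)=d(x,y)\le\diam\F\le\rho$, the ray $c_{v_\c}$ meets $B(\c x,\rho)$, so $v_\c^+\in\mathrm{Sh}_\c$; and by Lemma~\ref{shadow}, $b^{-1}e^{-hd(x,\c x)}\le\mu_p(\mathrm{Sh}_\c)\le b\,e^{-hd(x,\c x)}$. (ii) If $\xi\in\mathrm{Sh}_\c\cap\mathrm{Sh}_{\c'}$ with $d(x,\c x),d(x,\c'x)\in[k,k+1)$, then the ray $c_{x,\xi}$ passes within $\rho$ of $\c x$ and of $\c' x$ at times differing by at most $2\rho+1$, whence $d(\c x,\c' x)\le 4\rho+1$; by proper discontinuity and cocompactness there is $D=D(M,\rho)\in\NN$ with $\#\{\c':d(\c x,\c'x)\le4\rho+1\}\le D$ uniformly in $x\in\F$, so each $\xi\in\pX$ lies in at most $D$ of the shadows $\{\mathrm{Sh}_\c:d(x,\c x)\in[k,k+1)\}$. (iii) By the comparison theorem the diameter of $f_x^{-1}(\mathrm{Sh}_\c)\subset S_xX$ is at most $2\arcsin\bigl(\rho/(d(x,\c x)-\rho)\bigr)$; for $d(x,\c x)\in[k,k+1)$ call this bound $\delta_k$, so $\delta_k\to0$.

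Next I would group the $\c$ being counted by $k:=\lfloor d(x,\c x)\rfloor$; since $d(x,\c x)\le d(x,\c y)+\diam\F\le t+\diam\F$, only $k\le t+\diam\F$ occur. For $A_k:=\{\c:d(x,\c x)\in[k,k+1),\ v_\c\in A\}$, every $\mathrm{Sh}_\c$ with $\c\in A_k$ contains $v_\c^+\in A^+$ and has diameter $\le\delta_k$ (measured on $\pX$ through $f_x$), so $\bigcup_{\c\in A_k}\mathrm{Sh}_\c$ lies in the $\delta_k$-neighbourhood $\mathcal N_{\delta_k}(A^+)$ of $A^+$; combining (i) and (ii), $\#A_k\cdot b^{-1}e^{-h(k+1)}\le\sum_{\c\in A_k}\mu_p(\mathrm{Sh}_\c)\le D\,\mu_p(\mathcal N_{\delta_k}(A^+))$, i.e.\ $\#A_k\le Db\,e^{h(k+1)}\mu_p(\mathcal N_{\delta_k}(A^+))$. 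Summing over $k$, splitting off the finitely many $k<K_0$ (their total is $\le\#\{\c:d(x,\c x)<K_0\}$, independent of $t$) and using $\sum_{k\le t+\diam\F}e^{hk}\le C_1 e^{ht}$, then dividing by $e^{ht}$ and taking $\limsup$, yields for every $K_0$
\[\limsup_{t\to\infty}e^{-ht}a(0,t,x,y,A)\ \le\ C'\,\mu_p\bigl(\mathcal N_{\delta_{K_0}}(A^+)\bigr),\]
with $C'=C'(M)$. Letting $K_0\to\infty$ forces $\delta_{K_0}\to0$ and $\mathcal N_{\delta_{K_0}}(A^+)\searrow\overline{A^+}$, so the right side tends to $C'\mu_p(\overline{A^+})$.

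The hard part — and the reason Lemma~\ref{null} is invoked — is passing from $\mu_p(\overline{A^+})$ back to $\mu_p(A^+)$: the tails $\bigcup_{j>n}N_j^+$ typically accumulate on $\Sing^+$, so the shadow covering overshoots $A^+$. A point of $\overline{A^+}\setminus A^+$ has the form $f_x(w)$ with $w\in\overline A\setminus A$; if $w\in\Sing$ then $f_x(w)\in\Sing^+$, while if $w\in\Reg\cap S_xX$ then $w\in N_{j_0}$ for some $j_0\le n$ and, being a limit of points of the disjoint union $\bigcup_{j>n}N_j$, lies in $\partial N_{j_0}$. Thus $\overline{A^+}\setminus A^+\subset\Sing^+\cup\bigcup_{j\le n}\partial N_j^+$; by Lemma~\ref{null} $\mu_p(\Sing^+)=0$, and arranging $\mu_p(\partial N_j^+)=0$ for all $j$ by adjusting the regular partition-cover exactly as for \eqref{e:boundary} gives $\mu_p(\overline{A^+})=\mu_p(A^+)$, finishing the first inequality with $C:=C'$. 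For the second one runs the same argument at the basepoint $y$ with the shadows of the balls $B(\c^{-1}y,\rho)$, using that $\c^{-1}x\in\pi\phi^{[-t,0]}\bigcup_{i>m}V_i$ forces the initial vector at $y$ of the geodesic to $\c^{-1}x$ to point at $(\bigcup_{i>m}V_i)^-$, that $\mu_p(\Sing^-)=\mu_p(\Sing^+)=0$ because $\Sing$ is flip-invariant, and that $\mu_y\le e^{h\diam\F}\mu_p$; the bound then holds with $C$ replaced by $e^{h\diam\F}C'$, and one takes the larger constant. I expect this closure-versus-set issue to be the only real obstacle; the shadow geometry (facts (i)--(iii)) and the geometric series are routine in nonpositive curvature.
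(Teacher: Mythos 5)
Your proof is correct, and it takes a genuinely different route from the paper. The paper's proof proceeds through volume: it bounds the spherical volume $\mathrm{Vol}\,\pi\phi^t\bigcup_{j>n}N_j$ by choosing a maximal $\rho$-separated set on that slice of the sphere $S(x,t)$, applies the Shadowing Lemma to each of the resulting disjoint shadows to bound their number by $be^{ht}\mu_p\bigl((B_a\bigcup_{j>n}N_j)^+\bigr)$, integrates in $t$ to get a cone volume, and finally converts volume to a $\Gamma$-orbit count using that each translate $\c\F$ has uniformly bounded diameter and volume; it then sends $a\to0$. Your argument bypasses volume entirely: you count the orbit points directly, stratify them into annuli $d(x,\c x)\in[k,k+1)$, attach to each a shadow $\mathrm{Sh}_\c=f_x(B(\c x,\rho))$, and use proper discontinuity to get a uniform multiplicity bound $D$ on overlaps within one annulus, which together with the Shadowing Lemma gives $\#A_k\lesssim e^{hk}\mu_p(\mathcal N_{\delta_k}(A^+))$ and then a geometric series. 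Both proofs hinge on the same two inputs — the Shadowing Lemma and shrinkage of shadows via the comparison theorem — but yours avoids the volume-to-count conversion step, while the paper's gives the cone-volume estimate \eqref{e:volume} as a byproduct. One further point in your favor: both arguments, after the limiting procedure ($a\to0$ in the paper, $K_0\to\infty$ for you), actually land on $\mu_p\bigl(\overline{A^+}\bigr)$ rather than $\mu_p(A^+)$; the paper passes over this silently, whereas you explicitly reduce $\overline{A^+}\setminus A^+\subset\Sing^+\cup\bigcup_{j\le n}(\partial N_j)^+$ and invoke Lemma~\ref{null} and the null-boundary normalization \eqref{e:boundary}, which is exactly the justification the paper needs but does not spell out. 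Your handling of the second inequality (shadows based at $y$, flip-invariance of $\Sing$, and the Radon--Nikodym bound $d\mu_y/d\mu_p\le e^{h\diam\F}$) matches the paper's one-line remark at the end of its proof.
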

\begin{proof}
We start to estimate from above the spherical volume of $\pi\phi^t\cup_{j=n+1}^\infty N_j$ which is a subset of the sphere $S(x,t)$ of radius $t$ around $x$.

Let $x_1,\cdots, x_k$ be a maximal $\rho$-separated subset of $\pi\phi^t\cup_{j=n+1}^\infty N_j$, where $\rho\ge R$ from Shadowing Lemma \ref{shadow}. Then $B(x_i,\rho/2), i=1,2,\cdots, k$ are disjoint. By comparison theorem (see \eqref{e:comp}), for any $a>0$ there exists $T_3>0$ such that if $t\ge T_3$, then $f_{x}B(x_i,\rho/2)\subset (B_{a}\cup_{j=n+1}^\infty N_j)^+$. By Shadowing Lemma \ref{shadow} and the fact that $p=x$, we know $\mu_p(f_{x}B(x_i,\rho/2))\ge b^{-1}e^{-ht}$ and hence
$$k\le be^{ht}\mu_p((B_{a}\cup_{j=n+1}^\infty N_j)^+).$$
From the uniformity of the geometry, there exists $l>0$ such that $\text{Vol} (B(x_i,\rho)\cap S(x,t))\le l$ for each $1\le i\le k$.
So
$$ \text{Vol} \pi\phi^t\cup_{j=n+1}^\infty N_j\le lk\le lbe^{ht}\mu_p((B_{a}\cup_{j=n+1}^\infty N_j)^+).$$
Then there exists $C_1, C_2>0$ such that
\begin{equation}\label{e:volume}
\begin{aligned}
 \text{Vol} \pi\phi^{[0,t]}\cup_{j=n+1}^\infty N_j\le &C_1+\int_{T_3}^t  \text{Vol} \phi^s\cup_{j=n+1}^\infty N_j ds\\
\le &C_1+C_2e^{ht}\mu_p((B_{a}\cup_{j=n+1}^\infty N_j)^+).
\end{aligned}
\end{equation}
Note that $C_2$ is independent of $T_3$ and $a$.

Now since each $\c\F$ has equal finite diameter and volume, there exists $T_4>0$ such that
\begin{equation}\label{e:up}
\begin{aligned}
a(0,t, x,y, \cup_{j=n+1}^\infty N_j)&\le C_3+C_4  \text{Vol} \pi\phi^{[T_4,t]}B_{a}\cup_{j=n+1}^\infty N_j\\
&\le C_5+C_6e^{ht}\mu_p((B_{2a}\cup_{j=n+1}^\infty N_j)^+)
\end{aligned}
\end{equation}
where we used \eqref{e:volume} in the last inequality. Note that $C_6$ is independent of $a$.
Thus
$$\limsup_{t\to \infty}e^{-ht}a(0,t, x,y, \cup_{j=n+1}^\infty N_j)\le C_6\cdot \mu_p((B_{2a}\cup_{j=n+1}^\infty N_j)^+).$$
As $a>0$ could be arbitrarily small and $C_6$ is independent of $a$, we get the first inequality in the lemma.

The second inequality can be proved analogously with minor modification: When applying Shadowing Lemma \ref{shadow}, we transfer from $\mu_p$ to $\mu_y$ by
$$\frac{d\mu_y}{d\mu_p}(\xi)=e^{-hb_\xi(y,p)}\le e^{hd(y,p)}\le e^{h\diam \F}$$
for any $\xi\in \pX$.
\end{proof}

\begin{proof}[Proof of Theorem \ref{margulis}]
Since the diameter of each flow box is no more than $4\e$, we have
\begin{equation}\label{e:less1}
\begin{aligned}
n(0,t, \cup V_i, \cup (N_j)^0)\le a_{t+4\e}(x,y)
\end{aligned}
\end{equation}
and
\begin{equation}\label{e:less2}
\begin{aligned}
a_{t-4\e}(x,y)&\le a^{1}_{t-4\e}(x,y)+ a(0,t, x,y, \cup_{j=n+1}^\infty N_j)\\
&+a(0,t, x,y, \cup_{i=m+1}^\infty V_i)+ n(0,t, \cup_{i=1}^m V_i, \cup_{j=1}^n (N_j)^0).
\end{aligned}
\end{equation}

For each $V_i$, denote by $t_0^i:=b_{v_i^-}(\pi v_i,p)$ where $v_i\in V_i$. Recall that in subsection 3.1 we suppressed $i$ and write $t_0=t_0^i$, since only one $V_i$ is considered there. By Proposition \ref{key}, for any $m,n\in \NN$
\begin{equation*}
\begin{aligned}
&\liminf_{t\to \infty}e^{-ht}n(0,t, \cup V_i, \cup (N_j)^0)\\
\ge&\liminf_{t\to \infty}e^{-ht}\sum_{i=1}^m\sum_{j=1}^n n(0,t, V_i,  (N_j)^0)\\
\ge &\sum_{i=1}^m\sum_{j=1}^n e^{-2Q\e}e^{-hb_{v_i^-}(\pi v_i,p)}\mu_p((V_i)^-)\mu_p((N_j)^+)\frac{1}{h}.
\end{aligned}
\end{equation*}
Note that $v\mapsto b_{v^-}(\pi v,p)$ is a continuous function by Lemma \ref{continuous}. So if we choose a sequence of finer and finer regular partition-covers, and let $m,n\to \infty$ on the right hand,
\begin{equation*}
\begin{aligned}
&\liminf_{t\to \infty}e^{-ht}n(0,t, \cup V_i, \cup (N_j)^0)\\
\ge &e^{-2Q\e}\frac{1}{h}\int_{S_xM\cap \Reg}\int_{S_y M\cap \Reg}e^{-hb_{v^-}(\pi v,p)}d\tilde\mu_y(-v)d\tilde\mu_x(w).
\end{aligned}
\end{equation*}
Then by \eqref{e:less1} we have
\begin{equation*}
\begin{aligned}
a_{t+4\e}(x,y)&\gtrsim e^{-2Q\e}\frac{1}{h}e^{ht} \int_{S_xM\cap \Reg}\int_{S_y M\cap \Reg}e^{-hb_{v^-}(\pi v,p)}d\tilde\mu_y(-v)d\tilde\mu_x(w).
\end{aligned}
\end{equation*}
Replacing $t$ by $t-\e$, we have
\begin{equation}\label{e:ge}
\begin{aligned}
a_{t}(x,y)&\gtrsim e^{-2Q\e}e^{-4\e}\frac{1}{h}e^{ht} \int_{S_xM\cap \Reg}\int_{S_y M\cap \Reg}e^{-hb_{v^-}(\pi v,p)}d\tilde\mu_y(-v)d\tilde\mu_x(w).
\end{aligned}
\end{equation}

On the other hand, by Proposition \ref{key} and Corollary \ref{equicon}, for any $m,n\in \NN$
\begin{equation}\label{e:upper}
\begin{aligned}
&e^{2Q\e}\frac{1}{h} \int_{S_xM\cap \Reg}\int_{S_y M\cap \Reg}e^{-hb_{v^-}(\pi v,p)}d\tilde\mu_y(-v)d\tilde\mu_x(w)\\
\ge &\sum_{i=1}^m\sum_{j=1}^n e^{2Q\e}e^{-h(t_0^i+\e^2+4\e)}\mu_p((V_i)^-)\mu_p((N_j)^+)\frac{1}{h}\\
\ge &\limsup_{t\to \infty}\sum_{i=1}^m\sum_{j=1}^n n(0,t, V_i,  (N_j)^0)e^{-ht}e^{-h(\e^2+4\e)}.
\end{aligned}
\end{equation}
\begin{comment}
Letting $m,n\to \infty$ on the right hand side, we have
\begin{equation}\label{e:upper}
\begin{aligned}
&e^{2Q\e}\frac{1}{h}\int_{S_xM}\int_{S_y M}e^{-hb_{v^-}(\pi v,p)}d\tilde\mu_p(-v)d\tilde\mu_p(w)\\
\ge &\limsup_{t\to \infty}e^{-ht} n(0,t, \cup V_i, \cup (N_j)^0).
\end{aligned}
\end{equation}
\end{comment}
Combining with \eqref{e:less2} and Lemmas \ref{singular}, \ref{nonuniform}, one has
\begin{equation*}
\begin{aligned}
&a_{t-4\e}(x,y)\lesssim  Ce^{h'(t-4\e)}\\+&Ce^{h(t-4\e)}\cdot \mu_p((\cup_{j=n+1}^\infty N_j)^+)+ Ce^{h(t-4\e)}\cdot \mu_p((\cup_{j=m+1}^\infty V_i)^+)\\
+&e^{2Q\e}e^{h(\e^2+4\e)}\frac{1}{h}e^{ht} \int_{S_xM\cap \Reg}\int_{S_y M\cap \Reg}e^{-hb_{v^-}(\pi v,p)}d\tilde\mu_y(-v)d\tilde\mu_x(w).
\end{aligned}
\end{equation*}
Letting $m,n\to \infty$ and replacing $t-4\e$ by $t$, we have
\begin{equation}\label{e:le}
\begin{aligned}
&a_{t}(x,y)\\
\lesssim &e^{2Q\e}e^{h(\e^2+4\e)+4\e}\frac{1}{h}e^{ht} \int_{S_xM\cap \Reg}\int_{S_y M\cap \Reg}e^{-hb_{v^-}(\pi v,p)}d\tilde\mu_y(-v)d\tilde\mu_x(w).
\end{aligned}
\end{equation}

Letting $\e\to 0$ in \eqref{e:ge} and \eqref{e:le} and recalling that $p=x$, we get
$$a_{t}(x,y)\sim \frac{1}{h}e^{ht}\cdot c(x,y)$$
where
$$c(x,y):=\int_{S_xX\cap \Reg}\int_{S_y X\cap \Reg }e^{-hb_{v^-}(\pi v,x)}d\tilde\mu_y(-v)d\tilde\mu_x(w).$$
By Lemma \ref{null}, in fact we have
$$c(x,y)=\int_{S_xX}\int_{S_y X}e^{-hb_{v^-}(\pi v,x)}d\tilde\mu_y(-v)d\tilde\mu_x(w).$$

It follows that
$b_t(x)=\int_\F a_{t}(x,y)d  \text{Vol} (y)\sim  \frac{1}{h}e^{ht}\int_\F c(x,y)d  \text{Vol}(y)$ by the dominated convergence Theorem. Indeed, by \eqref{e:less2}, Lemma \ref{singular}, \eqref{e:up} and \eqref{e:upper},
$$e^{-ht}a_{t}(x,y)\le B_1+B_2c(x,y)$$
where the right hand side is integrable.

Define $c(x):=\int_\F c(x,y)d  \text{Vol}(y)$, we get $b_t(x)\sim c(x)\frac{e^{ht}}{h}$.
Obviously, $c(\c x)=c(x)$ for any $\c\in \C$. So $c$ descends to a function from $M$ to $\RR$, which still denoted by $c$.

It remains to prove the continuity of $c$. Let $y_n\to x\in X$. For any $\e>0$, there exists $n\in \NN$ such that $d(x,y_n)<\e$. Then
$$b_{t-\e}(x)\le b_{t}(y_n)\le b_{t+\e}(x).$$
We have
\begin{equation*}
\begin{aligned}
c(y_n)=\lim_{t\to \infty}\frac{b_t(y_n)}{e^{ht}/h}\le \lim_{t\to \infty}\frac{b_{t+\e}(x)}{e^{ht}/h}=c(x)e^{h\e}
\end{aligned}
\end{equation*}
and
\begin{equation*}
\begin{aligned}
c(y_n)=\lim_{t\to \infty}\frac{b_t(y_n)}{e^{ht}/h}\ge \lim_{t\to \infty}\frac{b_{t-\e}(x)}{e^{ht}/h}=c(x)e^{-h\e}.
\end{aligned}
\end{equation*}
Thus $\lim_{n\to \infty}c(y_n)=c(x)$ and $c$ is continuous.
\end{proof}

\section{Properties of the Margulis function}
We prove Theorem \ref{function} in this section. Let $M$ be a rank one closed $C^\infty$ Riemannian manifold of nonpositive curvature, $X$ the universal cover of $M$. Firstly, we give an equivalent definition for the Patterson-Sullivan measure via the Margulis function.

Recall that $f_x: S_xX\to \partial X$, $f_x(v)=v^+$. Similarly, we define the canonical projection $f_x^R: S(x,R)\to \partial X$ by $f_x^R(y)=v_y^+$ where $v_y$ is the unit normal vector of the sphere $S(x,R)$ at $y$. For any continuous function $\varphi: \partial X \to \RR$, define a measure on $\pX$ by
$$\nu_x^R(\varphi):=\frac{1}{e^{hR}}\int_{S(x,R)}\varphi \circ f_x^R(y)d \text{Vol} (y).$$
By Theorem \ref{margulis} or \eqref{e:sphere}, $\nu_x^R(\partial X)$ is uniformly bounded from above and below, and hence there exist limit measures of $\nu_x^R$ when $R\to \infty$. Take any limit measure $\nu_x$. By Theorem \ref{margulis} or \eqref{e:sphere}, we see that
$$\nu_x(\partial X)=\lim_{R\to \infty}\frac{s_R(x)}{e^{hR}}=c(x).$$
Moreover, by definition one can check that
\begin{enumerate}
\item For any $p,\ q \in X$ and $\nu_{p}$-a.e. $\xi\in \pX$,
$$\frac{d\nu_{q}}{d \nu_{p}}(\xi)=e^{-h \cdot b_{\xi}(q,p)}.$$
% where $b_{\xi}(q,p)$ is the Busemann function.
\item $\{\nu_{p}\}_{p\in X}$ is $\Gamma$-equivariant, i.e., for every Borel set $A \subset \pX$ and for any $\c \in \Gamma$,
we have $$\nu_{\c p}(\c A) = \nu_{p}(A).$$
\end{enumerate}
(2) is obvious. Let us prove (1). Let $\rho>0$ be arbitrarily small. Take a small compact neighborhood $U_\xi$ of $\xi$ in $\pX$ such that $|b_{\xi}(q,p)-b_{\xi'}(q,p)|<\rho$ for any $\xi'\in U_\xi$. Let $R>0$ be large enough such that
\begin{enumerate}
  \item $|\frac{\text{Vol}(A_R)}{e^{hR}}-\nu_p(U_{\xi})|<\rho$ where $A_R=(f_p^R)^{-1}U_\xi$;
  \item $|d(q,c_{p,\xi'}(R))-R-b_{\xi'}(q,p)|<\rho$ for any $\xi'\in U_\xi$.
\end{enumerate}
Now we divide $U_\xi$ into finitely many sufficiently small compact subsets $U_\xi^i\subset U_\xi, i=1,\cdots,k$ such that the following holds. By enlarging $R$ if necessary,
$$\left|\frac{\text{Vol}(\bar A_R^i)}{\text{Vol}( A_R^i)}-1\right|<\rho$$
where $A_R^i=(f_p^R)^{-1}U_\xi^i$, $\bar A_R^i=(f_q^{d(q,c_{p,\xi_i}(R))})^{-1}U_\xi^i$ and $\xi_i\in U_\xi^i$.
Then
\begin{equation*}
\begin{aligned}
\nu_q(U_\xi)=&\sum_{i=1}^k\nu_q(U_\xi^i)\le \sum_{i=1}^k\nu_q^{d(q,c_{p,\xi_i}(R))}(U_\xi^i)+\rho\\
\le&\frac{1}{e^{h(R+b_{\xi_i}(q,p)-\rho)}}\sum_{i=1}^k \text{Vol}(\bar A_R^i)+\rho\\
\le &\frac{1+\rho}{e^{h(R+b_{\xi}(q,p)-2\rho)}}\sum_{i=1}^k \text{Vol}(A_R^i)+\rho\\
\le &\frac{1+\rho}{e^{h(b_{\xi}(q,p)-2\rho)}}\nu_p^R (U_\xi)+\rho\\
\le &\frac{1+\rho}{e^{h(b_{\xi}(q,p)-2\rho)}}(\nu_p (U_\xi)+\rho)+\rho.
\end{aligned}
\end{equation*}
As $U_\xi$ shrinks to $\{\xi\}$, $\rho>0$ could be arbitrarily small. So $\frac{d\nu_{q}}{d \nu_{p}}(\xi)\le e^{-h \cdot b_{\xi}(q,p)}$. By symmetry, we get $\frac{d\nu_{q}}{d \nu_{p}}(\xi)= e^{-h \cdot b_{\xi}(q,p)}$.

It follows that $\{\nu_x\}_{x\in X}$ is an $h$-dimensional Busemann density. By \cite[Theorem 4.10]{Kn1}, there exists exactly one Busemann density up to a scalar constant, which is realized by the Patterson-Sullivan measure. Thus $\{\nu_x\}_{x\in X}$ coincide with $\{\mu_x\}_{x\in X}$ up to a scalar constant.

\begin{proof}[Proof of Theorem \ref{function}]
Recall that $\tilde \mu_x$ is the normalized Patterson-Sullivan measure. By the above discussion, particularly $\nu_x(\partial X)=c(x)$ and $\frac{d\nu_{y}}{d \nu_{x}}(\xi)=e^{-h \cdot b_{\xi}(y,x)}$, we have
$$\frac{d\bar \mu_{y}}{d \bar\mu_{x}}(\xi)=\frac{c(x)}{c(y)}e^{-h \cdot b_{\xi}(y,x)}.$$
Then $c(y)=c(x)\int_{\pX}e^{-h \cdot b_{\xi}(y,x)} d\bar \mu_x(\xi).$

Recall that $y\mapsto b_{\xi}(y,x)$ is $C^2$ \cite[Theorem 1]{Es} (see also \cite[Section 2.3]{LS}), and moreover both $\nabla_yb_{\xi}(y,x)$ and $\triangle_yb_{\xi}(y,x)=tr U(y,\xi)$ depend both continuously on $\xi$.
It follows that $c$ is $C^1$.

If $c$ is constant, then $\int_{\pX}e^{-h \cdot b_{\xi}(y,x)} d\bar \mu_x(\xi)\equiv 1$. Taking Laplacian with respect to $y$ on both sides, we have
$$\int_{\pX}h(h-tr U(y,\xi))e^{-h \cdot b_{\xi}(y,x)} d\bar \mu_x(\xi)\equiv 0.$$
It follows that
$$h=\frac{\int_{\pX} tr U(y,\xi)e^{-h \cdot b_{\xi}(y,x)} d\bar \mu_x(\xi)}{\int_{\pX} e^{-h \cdot b_{\xi}(y,x)} d\bar \mu_x(\xi)}=\int_{\pX} tr U(y,\xi)d\bar \mu_y(\xi)$$
 for any $y\in X$.
\end{proof}

\section{Rigidity in dimension two}
Let $M$ be a rank one closed Riemannian surface of nonpositive curvature in this section.  A horocyclic flow is a continuous flow $h_s$ on $SM$ whose orbits are horocycles, i.e., for $v\in SM$, $\{h_sv: s\in \RR\}=W^s(v),$ where $W^s(v)$ is the strong unstable horocycle manifold of $v$ in $SM$.

\subsection{Unique ergodicity of horocycle flow}
We recall the recent results in \cite{Cl1} and \cite{Cl} on the unique ergodicity of the horocycle flow, which will be used in our arguments.

If $M$ has constant negative curvature, Furstenberg \cite{Fur} proved the unique ergodicity of the horocycle flow, which is extended to compact surfaces of variable negative curvature by Marcus \cite{Ma}.
To apply Marcus's method to surfaces of nonpositive curvature,  we need to define the horocycle flow using the so-called \emph{Margulis parametrization}. In \cite{Cl1} and \cite{Cl},  Margulis parametrization is supposed to be defined on certain subset $\Sigma$ of $SM$. There is another natural parametrization of the horocycle flow given by arc length of the horocycles, which clearly is well defined everywhere.
It is called the \emph{Lebesgue parametrization} and the Lebesgue horocycle flow is denoted by $h_s^L$.
Using the time change between $h_s^M|_\Sigma$ and $h_s^L|_\Sigma$, it is showed in \cite[Proposition 3.5]{Cl} that there is a bijection between finite invariant measures invariant under $h_s^M|_\Sigma$ and $h_s^L|_\Sigma$ respectively.

For example, let $\Sigma_0$ be the subset of $SM$ consisting of vectors whose horocycle contains a regular vector recurrent under the geodesic flow. Then the horocycle flow with Margulis parametrization $h_s^M$ is well defined on $\Sigma_0$. It showed in \cite[Theorem 3.6]{Cl1} that every finite Borel measure on $\Sigma_0$ invariant under horocycle flow $h_s^M$ is a constant multiple of $m|_{\Sigma_0}$, the Knieper measure $m$ restricted to $\Sigma_0$.
Thus by the above discussion, $h_s^L|_{\Sigma_0}$ is also uniquely ergodic.

From now on we assume that $M$ has no flat strips. In this case, we have $\Sigma=SM$, that is, $h_s^M$ is defined everywhere on $SM$ \cite[Proposition 4.1]{Cl}. Moreover, $h_s^M$ is uniquely ergodic on $SM$ and the unique finite Borel measure invariant under $h_s^M$ is the Knieper measure up to a constant multiple \cite[Theorem 4.2]{Cl}. It follows that $h_s^L$ is also uniquely ergodic on $SM$ \cite[Corollary 4.4]{Cl}. We are concerned with the unique probability measure $w^s$ invariant under the \emph{stable} horocycle flow.

Let $W^{wu}(v):=\{w\in SX: w^-=v^-\}$ be the weak unstable manifold of $v\in SX$, and $W^u(v):=\{w\in W^{wu}(v): b_{v^-}(\pi w, \pi v)=0\}$ the strong unstable horocycle manifold of $v$. Put $W^{ws}(v):=-W^{wu}(-v)$ and $W^{s}(v):=-W^{u}(-v)$.
Suppose that $W^{wu}(v)$ has no singular vectors. Then $W^{wu}(v)$ is a section of the (stable) horocycle flow $h_s^L$ in the sense of \cite[Lemma 3.2]{Cl}. By \cite[(2)]{Cl},
$$w^s(E)=\int_{W^{wu}(v)}\int_\RR \chi_E(h_s^L(u))dsd\mu_{W^{wu}(v)}(u)$$
where $\chi$ is the characteristic function, and $\mu_{W^{wu}(v)}$ is some Borel measure on $W^{wu}(v)$ which is in fact independent of the parametrization of the horocycle flow.

On the other hand, the unique invariant measure for $h_s^M$ is the Knieper measure $m$ \cite[Theorem 4.2]{Cl}, which can be expressed as
$$\int_{SX}\varphi dm=\int_\pX\int_\pX \int_\RR \varphi(\xi,\eta,t)e^{h\cdot \beta_{x}(\xi,\eta)}dtd\mu_x(\xi)d\mu_x(\eta)$$
since $X$ contains no flat strips.
Consider the canonical projection $$P=P_v: W^u(v)\to \pX, \quad P(w)=w^+,$$
then $$\mu_{W^{wu}(v)}(A)=\int \int_\RR \chi_A(\phi^tu)dt d\mu_{W^{u}(v)}(u)$$
where $\mu_{W^{u}(v)}(B):=\int_\pX \chi_{B}(P_v^{-1}\eta)e^{-hb_\eta(\pi v, x)}d\mu_x(\eta)$.

Thus $w^s$ is locally equivalent to the measure $ds \times dt \times d\mu_{W^u}$. If we disintegrate $w^s$ along $W^u$ foliation, the factor measure on a section $W^{ws}(v)$ is equivalent to $ds dt$, i.e., the Lebesgue measure $\text{Vol}$, and the conditional measures on the fiber $W^u(v)$ is equivalent to $P_v^{-1}\mu_x$.

\subsection{Integral formulas for topological entropy}
Recall that $M$ is a rank one closed Riemannian surface of nonpositive curvature without flat strips. Using the measure $w^s$ we can establish some formulas for topological entropy $h$ of the geodesic flow.

Let $B^s(v,R)$ denote the ball centered at $v$ of radius $R>0$ inside $W^s(v)$. In fact, it is just a curve. By the unique ergodicity of $h_s^L$, we have
\begin{lemma} \label{uniform}(Cf. \cite[Proposition 1.2]{Yue})
For any continuous $\varphi: SM\to \RR$,
$$\frac{1}{\text{Vol}(B^s(v,R))}\int_{B^s(v,R)}\varphi d\text{Vol}(y)\to \int_{SM}\varphi dw^s$$
as $R\to \infty$ uniformly in $v\in SM$.
\end{lemma}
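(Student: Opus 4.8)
The plan is to recognize the spatial average of $\varphi$ over $B^s(v,R)$ as a two--sided time average of $\varphi$ along a single orbit of the Lebesgue horocycle flow $h_s^L$, and then to apply the unique ergodicity of $h_s^L$ recorded above.

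First I would exploit that $\dim M=2$. The strong stable horocycle leaf $W^s(v)$ is one--dimensional, and the Lebesgue parametrization of the horocycle flow is, by definition, the arc--length parametrization; since $M$ has no flat strips, $h_s^L$ is defined everywhere on $SM$ (\cite[Proposition 4.1]{Cl}). Hence for every $v\in SM$ the orbit $s\mapsto h_s^L(v)$ runs through the whole leaf $W^s(v)$ at unit speed, so the metric ball $B^s(v,R)$ inside that leaf is exactly the orbit segment $\{\, h_s^L(v): -R\le s\le R\,\}$ and $\text{Vol}(B^s(v,R))=2R$. Therefore
\[
\frac{1}{\text{Vol}(B^s(v,R))}\int_{B^s(v,R)}\varphi\,d\text{Vol}(y)
=\frac{1}{2R}\int_{-R}^{R}\varphi\big(h_s^L(v)\big)\,ds
=\frac12\cdot\frac1R\int_{0}^{R}\varphi\big(h_s^L(v)\big)\,ds+\frac12\cdot\frac1R\int_{0}^{R}\varphi\big(h_{-s}^L(v)\big)\,ds .
\]

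Next I would invoke unique ergodicity. The flow $h_s^L$ is continuous on the compact space $SM$ and, by \cite[Corollary 4.4]{Cl}, has a single invariant probability measure, namely $w^s$. By the flow version of Oxtoby's theorem the forward averages $\frac1T\int_0^T\varphi(h_s^L(v))\,ds$ converge to $\int_{SM}\varphi\,dw^s$ uniformly in $v\in SM$ as $T\to\infty$: if this failed, one would get $\varepsilon>0$, $T_n\to\infty$ and $v_n\in SM$ for which the estimate fails, and any weak--$*$ subsequential limit of the empirical measures $\frac1{T_n}\int_0^{T_n}(h_s^L)_*\delta_{v_n}\,ds$ would be an $h_s^L$--invariant probability measure whose $\varphi$--integral is at distance $\ge\varepsilon$ from $\int_{SM}\varphi\,dw^s$, contradicting unique ergodicity. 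The identical statement holds for the time--reversed flow $h_{-s}^L$, whose unique invariant probability measure is again $w^s$. Applying this to both terms in the displayed identity gives the asserted uniform convergence.

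The one genuinely delicate point is the \emph{uniformity} in $v$, and that is precisely what the compactness argument in Oxtoby's theorem supplies, going beyond the pointwise Birkhoff ergodic theorem. The preparatory geometric fact — that $B^s(v,R)$ is a horocycle orbit segment of Lebesgue length $2R$ — is where the hypotheses $\dim M=2$ and ``no flat strips'' are used, the latter ensuring that $h_s^L$ is everywhere defined so that this description is valid for all $v\in SM$.
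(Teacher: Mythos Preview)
Your argument is correct and is exactly the approach the paper has in mind: the paper gives no separate proof, simply prefacing the lemma with ``By the unique ergodicity of $h_s^L$, we have'' and noting earlier that $\text{Vol}(B^s(v,R))=2R$ in dimension two. You have merely spelled out the standard Oxtoby argument behind that sentence.
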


For continuous $\varphi: SM\to \RR$, define $\varphi_x: X\to \RR$ by $\varphi_x(y)=\varphi(v(y))$ where $v(y)\in SX$ is the unique vector such that $c_{v(y)}(0)=y$ and $c_{v(y)}(t)=x$ for some $t\ge 0$.

Based on Lemma \ref{uniform}, we get the following proposition. The proof is the same as the one before \cite[Proposition 3.1]{Yue} (see also \cite{Kn4, Led}), and hence will be skipped. The basic idea here is that horospheres in $X$ can be approximated by geodesic spheres.
\begin{proposition}\label{uniform1}
For any continuous $\varphi: SM\to \RR$,
$$\frac{1}{s_R(x)}\int_{S(x,R)}\varphi_x(y) d\text{Vol}(y)\to \int_{SM}\varphi dw^s$$
as $R\to \infty$ uniformly in $x\in X$.
\end{proposition}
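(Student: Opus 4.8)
The plan is to transfer the uniform equidistribution statement for balls in strong stable manifolds (Lemma \ref{uniform}) to the corresponding statement for geodesic spheres $S(x,R)$, exploiting the fact that, as $R\to\infty$, pieces of geodesic spheres are well-approximated by pieces of horospheres, which in turn are exactly the strong stable manifolds. This is the argument carried out in \cite{Yue} (and earlier in \cite{Kn4, Led}) in the negative curvature case, and the point is that all the ingredients it needs are available here: uniform convexity/comparison estimates in nonpositive curvature, Lemma \ref{continuous} and Lemma \ref{equicon1} on (equi)continuity of Busemann functions, and the uniform-in-$v$ convergence in Lemma \ref{uniform}.

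First I would set up the comparison between a sphere and a horosphere. Fix $x\in X$, $R$ large, and a point $y\in S(x,R)$; let $v(y)\in S_yX$ be the inward radial vector (so $c_{v(y)}$ runs from $y$ to $x$ and beyond), and let $\xi=c_{v(y)}(+\infty)$. Near $y$, the sphere $S(x,R)$ and the horosphere $H_y(\xi)$ are both graphs over a neighborhood in $T_yX^{\perp}$, and by nonpositive curvature (convexity of $t\mapsto d(q,c(t))$ together with the comparison theorem) the $C^0$, and after more work the $C^1$, distance between these graphs over a ball of fixed radius $r$ tends to $0$ as $R\to\infty$, uniformly in $x$ and in $y\in S(x,R)$ — this uses only the uniform geometry of the compact quotient $M$. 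Consequently, writing $v_y\in SM$ for the image of $v(y)$ and $w_y\in SM$ for the image of the corresponding unit normal to $H_y(\xi)$ (these coincide: the radial field is the horospherical normal field), the volume element of $S(x,R)$ at $y$ and the volume element of the strong stable horosphere at the matching point differ by a factor $1+o(1)$, uniformly. Since $\varphi_x(y)=\varphi(v_y)$ and $\varphi$ is uniformly continuous on the compact $SM$, replacing $\varphi_x$ on $S(x,R)$ by the corresponding function on the horosphere introduces an error that is $o(1)$ uniformly in $x$.

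Next I would localize. Cover $S(x,R)$ by boundedly many (in a number independent of $x$ and $R$, by uniformity of the geometry) pieces, each of which, after the above approximation, sits inside a ball $B^s(v,r')$ of fixed radius $r'$ in a strong stable manifold; equivalently, tile the relevant horospheres by balls $B^s(v_i,r')$. Apply Lemma \ref{uniform} with radius $r'$: actually one wants to let the ball radius grow, so instead tile by balls $B^s(v_i,\rho)$ with $\rho=\rho(R)\to\infty$ slowly, for which the averages $\frac{1}{\mathrm{Vol}(B^s(v_i,\rho))}\int_{B^s(v_i,\rho)}\varphi\,d\mathrm{Vol}\to\int_{SM}\varphi\,dw^s$ uniformly in $v_i$ by Lemma \ref{uniform}. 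Since in dimension two $\mathrm{Vol}(B^s(v,\rho))=2\rho$ is independent of $v$, summing these local averages against the (now nearly horospherical) volume on $S(x,R)$ and dividing by $s_R(x)$ produces a convex combination of quantities each within $o(1)$ of $\int_{SM}\varphi\,dw^s$, hence the whole average is within $o(1)$ of $\int_{SM}\varphi\,dw^s$, uniformly in $x$. Letting $R\to\infty$ gives the claim.

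The main obstacle is making the sphere-to-horosphere approximation quantitative and \emph{uniform}: one needs that, over balls of radius $\rho(R)$ that themselves grow with $R$, the geodesic sphere $S(x,R)$ stays $C^1$-close (so that the induced volumes match up to $1+o(1)$) to the horosphere through the base point, with the error controlled uniformly in $x\in X$ and in the base point on the sphere. This is exactly the technical content behind ``horospheres in $X$ can be approximated by geodesic spheres'': it rests on the second-variation/Riccati comparison for the shape operators of spheres versus horospheres and on the uniform lower bound for the geometry coming from compactness of $M$ and the rank one, no-flat-strip hypothesis (which, via the no-flat-strip condition, keeps the relevant Busemann functions equicontinuous, cf. Lemma \ref{equicon1}). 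Once this uniform approximation is in hand, the remaining steps — localization, invoking Lemma \ref{uniform}, and recombining — are routine, and since the proof is word-for-word parallel to the argument preceding \cite[Proposition 3.1]{Yue}, I would simply cite it as indicated in the statement.
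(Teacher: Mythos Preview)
Your proposal is correct and follows exactly the approach indicated in the paper: the paper does not give a proof but simply states that the argument is identical to the one preceding \cite[Proposition 3.1]{Yue} (see also \cite{Kn4, Led}), with the basic idea being that geodesic spheres are approximated by horospheres so that Lemma \ref{uniform} can be invoked. Your sketch fleshes out precisely this strategy, so there is nothing to add.
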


\begin{theorem}\label{formula}
Let $M$ be a rank one closed Riemannian surface of nonpositive curvature without flat strips. Then
\begin{enumerate}
  \item $h=\int_{SM} tr U(v) dw^s(v)$.
  \item $h^2=\int_{SM} -tr \dot{U}(v)+(tr U(v))^2 dw^s(v)$
\end{enumerate}
where $U(v)$ and $tr U(v)$ are the second fundamental form and the mean curvature of the horocycle $H_{\pi v}(v^+)$ at $\pi v$.
\end{theorem}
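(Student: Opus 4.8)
The plan is to read off both identities from the volume asymptotics of Theorem~\ref{margulis} and the equidistribution statement Proposition~\ref{uniform1}, using the Riccati equation to pass between geodesic spheres and stable horospheres.

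\emph{Step 1: derivatives of the spherical volume.} Write $s_R(x)=\int_{S_xX}\theta_x(R,v)\,dv$, where $\theta_x(R,v)$ is the volume density of $S(x,R)$ along the geodesic $c_v$ from $x$; since $M$ is nonpositively curved there are no conjugate points, so $\theta_x(\cdot,v)$ is smooth and positive on $(0,\infty)$. The quantity $u_R(y):=\partial_R\log\theta_x(R,v)\big|_{y=c_v(R)}$ is the mean curvature $\Delta d(x,\cdot)(y)$ of $S(x,R)$ at $y$, and along $c_v$ it solves the Riccati equation, which in dimension two reads $\partial_R u+u^2+K(c_v(R))=0$ for the Gaussian curvature $K$, with $u\sim 1/R$ as $R\to0^+$. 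Hence $s_R'(x)=\int_{S(x,R)}u_R(y)\,d\text{Vol}(y)$, and differentiating once more while using $\partial_R u+u^2=-K$ gives $s_R''(x)=\int_{S(x,R)}\bigl(-K(y)\bigr)\,d\text{Vol}(y)\ge 0$, so $R\mapsto s_R(x)$ is convex.

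\emph{Step 2: the limit $s_R'/s_R\to h$ and assertion (1).} By Theorem~\ref{margulis} one has $s_{R\pm\delta}(x)/s_R(x)\to e^{\pm h\delta}$ as $R\to\infty$ for each fixed $\delta>0$ (recall $c(x)>0$). Combined with the convexity of Step~1, which makes $s_R'$ nondecreasing and yields $\tfrac1\delta\bigl(1-s_{R-\delta}/s_R\bigr)\le s_R'/s_R\le\tfrac1\delta\bigl(s_{R+\delta}/s_R-1\bigr)$, letting $R\to\infty$ and then $\delta\to0$ gives $s_R'(x)/s_R(x)\to h$. On the other hand, as $R\to\infty$ the sphere $S(x,R)$ at $y$ converges to the stable horosphere $H_{\pi v(y)}\bigl(v(y)^+\bigr)$ (its center $x$ tends to $v(y)^+\in\pX$, where $v(y)$ is the unit vector at $y$ pointing toward $x$), and the mean curvatures converge accordingly, so $u_R(y)\to \tr U(v(y))$ for $U(v)$ the second fundamental form of $H_{\pi v}(v^+)$; by compactness of $SM$ this convergence is uniform in $y$. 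Therefore, using Proposition~\ref{uniform1} with $\varphi=\tr U$,
\[ \frac{s_R'(x)}{s_R(x)}=\frac1{s_R(x)}\int_{S(x,R)}u_R(y)\,d\text{Vol}(y)\ \longrightarrow\ \int_{SM}\tr U\,dw^s, \]
and comparing the two limits proves (1).

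\emph{Step 3: assertion (2).} Apply Proposition~\ref{uniform1} to the continuous function $v\mapsto -K(\pi v)$, whose associated function on $X$ is $y\mapsto -K(y)$; with Step~1 this gives $s_R''(x)/s_R(x)\to\int_{SM}-K(\pi v)\,dw^s(v)$. To pin down this limit, set $a(R):=\log s_R(x)$; by Theorem~\ref{margulis}, $a(R)-hR$ is bounded, so $a'(R)-a'(R_0)=\int_{R_0}^Ra''(t)\,dt$ stays bounded and hence has Ces\`aro mean $0$. But $a''(R)=s_R''/s_R-(s_R'/s_R)^2$ converges, by Step~2, to $\int_{SM}-K(\pi v)\,dw^s-h^2$; a convergent function equals its Ces\`aro mean, so $\int_{SM}-K(\pi v)\,dw^s(v)=h^2$. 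Finally, the family $t\mapsto U(\phi^tv)$ satisfies the Riccati equation along $c_v$, which in dimension two gives $\tr\dot U(v)=(\tr U(v))^2+K(\pi v)$, i.e. $-K(\pi v)=-\tr\dot U(v)+(\tr U(v))^2$; substituting yields (2). Note that (1) and (2) are consistent with the formula of Theorem~\ref{function}(2) and with the case of constant curvature $-1$, where $\tr U\equiv h$ and $w^s$ is a probability measure.

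\emph{Main obstacle.} The only genuinely geometric input is the uniform convergence (over $SM$) of the second fundamental forms of the geodesic spheres $S(x,R)$ to those of the corresponding stable horospheres as $R\to\infty$; this is what licenses replacing $u_R(y)$ by $\tr U(v(y))$ in Step~2, and in particular it forces $\tr U$ to be continuous on $SM$. For closed nonpositively curved surfaces without flat strips this convergence holds and, by compactness, is uniform; the other ingredients---the squeeze argument from convexity and Theorem~\ref{margulis}, and the Ces\`aro identification of $\lim s_R''/s_R$ with $h^2$---use only elementary properties of $R\mapsto s_R(x)$.
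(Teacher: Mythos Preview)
Your proof is correct and uses the same core ingredients as the paper---Theorem~\ref{margulis}, Proposition~\ref{uniform1}, the uniform convergence of sphere mean curvatures to horosphere mean curvatures, and the Riccati equation. The execution differs in how the limits are extracted. The paper works with $G_x(R)=s_R(x)/e^{hR}$, computes $G_x'$, $G_x''$ (and $G_x'''$), shows via Proposition~\ref{uniform1} that each has a limit, and then invokes the elementary fact that a bounded function whose derivative converges must have derivative tending to zero; applying this to $G_x$ and then to $G_x'$ yields (1) and (2) directly. You instead exploit the two-dimensional identity $s_R''=\int_{S(x,R)}(-K)\,d\text{Vol}\ge 0$ to get convexity of $R\mapsto s_R$ and squeeze $s_R'/s_R\to h$ for (1), and then run a Ces\`aro argument on $(\log s_R)''$ for (2). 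Your route leans more on the dimension-two Riccati identity up front and avoids computing a third derivative; the paper's route does not need the convexity step and would adapt more readily to a higher-dimensional statement. Both arguments rest on the same geometric input you flag as the main obstacle, which the paper also takes for granted.
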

\begin{proof}
Consider the following function
$$G_x(R):=\frac{s_R(x)}{e^{hR}}=\frac{1}{e^{hR}}\int_{S(x,R)}d\text{Vol} (y).$$
Taking the derivatives, we have
\begin{equation*}
\begin{aligned}
G_x'(R)=&-hG_x(R)+\frac{1}{e^{hR}}\int_{S(x,R)}tr U_R(y) d\text{Vol} (y),\\
G_x''(R)=&-h^2G_x(R)-2hG_x'(R)+\\
&\frac{1}{e^{hR}}\int_{S(x,R)}-tr \dot U_R(y)+(tr U_R(y))^2 d\text{Vol} (y),\\
G_x'''(R)=&-h^3G_x(R)-3h^2G_x'(R)-3hG_x''(R)+\\
&\frac{1}{e^{hR}}\int_{S(x,R)}tr \ddot{U}_R(y)-3tr \dot U_R(y)tr U_R(y)+(tr U_R(y))^3 d\text{Vol} (y),
\end{aligned}
\end{equation*}
where $U_R(y)$ and $tr U_R(y)$ are the second fundamental form and the mean curvature of $S(x,R)$ at $y$.

Clearly, $tr U_R(y)\to tr U(v(y))$ as $R\to \infty$ uniformly. By Theorem \ref{margulis},
$$\lim_{R\to \infty}G_x(R)=\lim_{R\to \infty}\frac{s_R(x)}{e^{hR}}=c(x).$$
Combining with Proposition \ref{uniform1}, we have
\begin{equation}\label{e:deri}
\begin{aligned}
\lim_{R\to \infty}G_x'(R)=&-hc(x)+c(x)\int_{SM}tr U dw^s,\\
\lim_{R\to \infty}G_x''(R)=&-h^2c(x)-2h\lim_{R\to \infty}G_x'(R)+c(x)\int_{SM}-tr \dot U+(tr U)^2 dw^s,\\
\lim_{R\to \infty}G_x'''(R)=&-h^3c(x)-3h^2\lim_{R\to \infty}G_x'(R)-3h\lim_{R\to \infty}G_x''(R)+\\
&c(x)\int_{SM}tr \ddot{U}-3tr \dot U tr U+(tr U)^3 d w^s.
\end{aligned}
\end{equation}
Since $\lim_{R\to \infty}G_x'(R)$ exists and $\lim_{R\to \infty}G_x(R)$ is bounded, we have $\lim_{R\to \infty}G_x'(R)=0$. Similarly, considering the second and third derivative, we have
$$\lim_{R\to \infty}G_x''(R)=\lim_{R\to \infty}G_x'''(R)=0.$$
Plugging in \eqref{e:deri}, we have
\begin{enumerate}
  \item $h=\int_{SM} tr U(v) dw^s(v)$.
  \item $h^2=\int_{SM} -tr \dot{U}(v)+(tr U(v))^2 dw^s(v)$.
\end{enumerate}
\end{proof}

\subsection{Uniqueness of harmonic measure and rigidity}
We recall some facts from \cite{Yue} on the ergodic properties of foliations. Let $\mathcal{G}$ be any foliation on a compact Riemannian manifold $M$. A probability measure $\nu$ on $M$ is called \emph{harmonic} with respect to $\mathcal{G}$ if $\int_M \triangle^L fd\nu=0$ for any bounded measurable function $f$ on $M$ which is smooth in the leaf direction, where $\triangle^L$ denotes the Laplacian in the leaf direction.

A \emph{holonomy invariant measure} of the foliation $\mathcal{G}$ is a family of measures defined on each transversal of $\mathcal{G}$, which is invariant under all the canonical homeomorphisms of the holonomy pseudogroup \cite{Pl}. A measure is called \emph{completely invariant} with respect to $\mathcal{G}$ if it disintegrates to a constant function times the Lebesgue measure on the leaf, and the factor measure is a holonomy invariant measure on a transversal. By \cite{Ga}, a completely invariant measure must be a harmonic measure.

\begin{theorem}\label{unique}
Let $M$ be a rank one closed Riemannian surface of nonpositive curvature without flat strips. Then there is precisely one harmonic probability measure with respect to the strong stable horocycle foliation.
\end{theorem}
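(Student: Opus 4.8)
The plan is to identify the harmonic probability measures for the strong stable horocycle foliation — whose leaves are the strong stable horocycle manifolds $W^s(v)$ — with the invariant probability measures of the Lebesgue horocycle flow $h_s^L$, and then to invoke the unique ergodicity of $h_s^L$ recalled above (\cite[Corollary 4.4]{Cl}). The essential feature of dimension two is that these leaves are one-dimensional: each leaf $W^s(v)$ is a connected complete curve, and in the arc-length parametrization (the one underlying $h_s^L$) the leafwise Laplacian is simply $\triangle^L=\partial_s^2$. Moreover the orbit foliation of the flow $h_s^L$ is exactly the strong stable horocycle foliation, since $h_s^L$ moves each vector along its own strong stable horocycle.

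\textbf{Existence.} First I would check that $w^s$ itself is harmonic. Since $w^s$ is invariant under the fixed-point-free flow $h_s^L$, in any flow box it disintegrates as a transverse measure times the arc-length measure $ds$ along the leaves, and the transverse factor is holonomy invariant; hence $w^s$ is completely invariant with respect to the strong stable foliation, and by \cite{Ga} it is a harmonic measure. (Equivalently, this is already visible from the local product description of $w^s$ recorded above.)

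\textbf{Uniqueness.} Let $m$ be an arbitrary harmonic probability measure. By Garnett's structure theorem, $m$ disintegrates along the leaves with conditional measures that are positive leafwise-harmonic densities with respect to arc length, i.e. functions $\psi$ on the leaf with $\psi''=0$. On a connected one-dimensional leaf (a line or a circle) a nonnegative solution of $\psi''=0$ must be constant; hence the conditional measures of $m$ are constant multiples of the arc-length measure on each leaf. Since $h_s^L$ acts on each leaf by translation through arc length and maps each leaf to itself, it preserves every measure whose conditionals have this form; therefore $m$ is $h_s^L$-invariant, and unique ergodicity of $h_s^L$ forces $m=w^s$. Together with the previous step, $w^s$ is the unique harmonic probability measure.

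\textbf{Main obstacle.} The delicate point is the passage from ``$m$ harmonic'' to ``conditional measures proportional to arc length'': it relies on Garnett's regularity theorem (a harmonic measure disintegrates into smooth positive leafwise-harmonic densities) together with the fact that such a density on a complete one-dimensional leaf is constant, so one must know the leaves of the foliation are genuine complete curves — which is precisely where the no-flat-strip hypothesis is used, since by \cite{Cl} it guarantees that $h_s^L$, and hence the foliation, is globally defined on all of $SM$. A secondary point requiring care is the local-product/holonomy bookkeeping behind the correspondence between completely invariant measures and $h_s^L$-invariant measures for this only-continuous foliation; this is the same mechanism used by Yue \cite{Yue} in the negatively curved case, and it transfers to the present setting essentially verbatim.
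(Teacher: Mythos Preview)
Your argument is correct and follows the same overall scheme as the paper: show that every harmonic measure for the strong stable foliation is completely invariant, and then conclude uniqueness from the unique ergodicity of $h_s^L$ established in \cite{Cl}. The only difference lies in how the implication ``harmonic $\Rightarrow$ completely invariant'' is obtained. The paper observes that in dimension two the leaves have polynomial volume growth and then quotes Kaimanovich's theorem \cite{Kai} as a black box, whereas you invoke Garnett's disintegration theorem and argue directly that a nonnegative solution of $\psi''=0$ on a complete one-dimensional leaf is constant. Your route is more elementary and self-contained for the case at hand; the paper's route is shorter to write and would generalize to higher-dimensional leaves of subexponential growth. Both are valid, and your identification of the no-flat-strip hypothesis as the ingredient guaranteeing that the foliation (equivalently $h_s^L$) is globally defined on $SM$ matches the paper's use of \cite[Proposition 4.1]{Cl}.
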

\begin{proof}
If $\dim M=2$, then the leaves of the strong stable horocycle foliation have polynomial volume growth. By \cite{Kai}, any harmonic measure must be completely invariant. By \cite{Cl}, there is a unique completely invariant measure $w^s$. As a consequence, $w^s$ is the unique harmonic measure.
\end{proof}

Recall that $\tilde \mu_x$ is a Borel measure on $S_xX$ (hence descending to $S_xM$) induced by the Patterson-Sullivan measure $\mu_x$ and let us assume that it is normalized, by a slight abuse of the notation. We have the following characterization of $w^s$.
\begin{proposition}\label{measure}
For any continuous $\varphi: SM\to \RR$, we have
$$C\int_{SM} \varphi dw^s=\int_Mc(x)\int_{S_xM}\varphi d\tilde \mu_x(v)d \text{Vol}(x)$$
where $C=\int_M c(x)d\text{Vol}(x)$.
\end{proposition}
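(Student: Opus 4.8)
I want to show that the measure $w^s$ on $SM$ has, up to normalisation, the disintegration $c(x)\,d\tilde\mu_x(v)\,d\mathrm{Vol}(x)$ over the base $M$ via the footpoint projection $\pi$. The natural route is to identify $w^s$ by its defining uniqueness property: $w^s$ is the unique probability measure invariant under the Lebesgue stable horocycle flow $h^L_s$ (Theorem \ref{unique} together with \cite{Cl}). So it suffices to check that the normalised measure $\tfrac1C\int_M c(x)\int_{S_xM}(\cdot)\,d\tilde\mu_x\,d\mathrm{Vol}(x)$ is (i) a probability measure and (ii) invariant under $h^L_s$; then the two must coincide and the formula follows. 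Item (i) is immediate from $C=\int_M c(x)\,d\mathrm{Vol}(x)$ and the normalisation of $\tilde\mu_x$, so the content is item (ii).

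**Carrying it out.** First I would recall from Section 4 that the Patterson–Sullivan family $\{\mu_x\}$ (equivalently $\{\nu_x\}$, after Theorem \ref{function}'s identification) satisfies the Radon–Nikodym cocycle $\tfrac{d\mu_q}{d\mu_p}(\xi)=e^{-h\,b_\xi(q,p)}$, and $c(x)$ is, up to the global constant, $\mu_x(\partial X)$; this is exactly the relation $c(y)=c(x)\int_{\partial X}e^{-h\,b_\xi(y,x)}d\bar\mu_x(\xi)$ derived just before the proof of Theorem \ref{function}. Next, I would use the explicit local description of $w^s$ obtained in Subsection 6.1: disintegrating $w^s$ along the strong unstable foliation $W^u$, the conditional measures on a fibre $W^u(v)$ are (up to scale) $(P_v^{-1})_*(e^{-h\,b_\eta(\pi v,x)}d\mu_x(\eta))$ and the factor measure on a transversal $W^{ws}(v)$ is equivalent to $ds\,dt$, i.e. to the Riemannian volume. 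Concretely, $w^s$ is locally a product $ds\times dt\times d\mu_{W^u}$. The claim is then a Fubini-type rearrangement: parametrise a neighbourhood in $SM$ by (horocycle time $s$, geodesic time $t$, unstable boundary point $\eta$), rewrite $ds\,dt$ against the unstable conditional in terms of the footpoint $x\in M$ and the direction $v\in S_xM$, and recognise the weight $c(x)$ emerging from the total mass $\int e^{-h\,b_\eta(\pi v,x)}d\mu_x(\eta)$ of the unstable conditional together with the Busemann Jacobian relating the horospherical volume element to $d\mathrm{Vol}$ on $M$. Equivalently — and this is cleaner — I would verify directly that $\mu^{\mathrm{new}}:=\tfrac1C\int_M c(x)\int_{S_xM}(\cdot)\,d\tilde\mu_x\,d\mathrm{Vol}(x)$ is $h^L_s$-invariant: the stable horocycle flow moves the footpoint along a horocycle (so $d\mathrm{Vol}(x)$ transforms by the horospherical Jacobian, which by nonpositive curvature equals $\exp\!\big(-\!\int_0^s \mathrm{tr}\,U\big)$ up to the usual sign conventions), while it moves the direction $v$ within $S_xX$ and hence transforms $\tilde\mu_x$ by the Patterson–Sullivan cocycle $e^{-h\,b_\xi(\cdot,\cdot)}$; and the factor $c(x)$ transforms by the very relation $c(y)=c(x)\int e^{-h b_\xi(y,x)}d\bar\mu_x$. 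Collecting these three transformation laws, the Jacobians cancel and $\mu^{\mathrm{new}}$ is seen to be invariant. By unique ergodicity of $h^L_s$ on $SM$ (no flat strips), $\mu^{\mathrm{new}}=w^s$, which is the assertion.

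**Main obstacle.** The routine parts are the two Fubini rearrangements; the delicate point is bookkeeping the three intertwined Jacobians — the horospherical volume Jacobian (involving $\mathrm{tr}\,U$), the Patterson–Sullivan Radon–Nikodym cocycle (involving $h\,b_\xi$), and the variation of the Margulis weight $c$ — and checking that the $h$-exponents and the curvature exponents cancel \emph{exactly}, using precisely the normalisation $h=\int_{SM}\mathrm{tr}\,U\,dw^s$ from Theorem \ref{formula}(1) and the Busemann-density identity for $c$. A second, more structural subtlety is that this identification implicitly requires the uniqueness of the harmonic/holonomy-invariant measure in the $W^u$ (equivalently $W^s$) direction — i.e. that $\tilde\mu_x$ is forced, not just one possible transverse measure — which is exactly what Theorem \ref{unique} and \cite{Cl} supply; I must invoke them in the right direction so that the product description of $w^s$ in Subsection 6.1 is legitimately the \emph{only} option. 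Once those two points are handled, the computation is mechanical.
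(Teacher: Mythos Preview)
Your overall strategy---verify that $\mu^{\mathrm{new}}$ is invariant under the stable Lebesgue horocycle flow $h^L_s$ and then invoke unique ergodicity---is sound and close to the paper's approach. The paper phrases it as ``show the right-hand side is a harmonic measure and apply Theorem~\ref{unique}'', deferring the computation to \cite[Proposition 4.1]{Yue}; since in dimension two Theorem~\ref{unique} itself reduces harmonic to completely invariant (via Kaimanovich) and then to $h^L_s$-invariant (via \cite{Cl}), your route is if anything slightly more direct.

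However, your account of the Jacobian bookkeeping is wrong, and the ``main obstacle'' you flag is a phantom. The Lebesgue stable horocycle flow does \emph{not} transform $d\mathrm{Vol}(x)$ by $\exp\bigl(-\int_0^s \mathrm{tr}\,U\bigr)$: that factor is the Jacobian of the \emph{geodesic} flow acting on horospheres, whereas $h^L_s$ is arc-length translation along a fixed horocycle and therefore preserves the induced arc-length measure on each horosphere. By the co-area formula (using $|\nabla b_\xi|=1$) it then preserves $d\mathrm{Vol}$ on $X$ for each fixed $\xi$. Likewise, the Patterson--Sullivan cocycle you invoke is identically $1$ here: under $h^L_s$ the forward endpoint $\xi=v^+$ is fixed and the footpoint stays on the horosphere centered at $\xi$, so $b_\xi(y,x)=0$ and $d\mu_y/d\mu_x(\xi)=e^{-h\cdot 0}=1$. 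Finally, since $c(x)\,d\tilde\mu_x$ is a constant multiple of the unnormalized $d\mu_x$ (this is exactly the identification made at the start of Section~5), there is no separate ``variation of $c$'' to track. All three of your claimed Jacobians are therefore equal to $1$, the invariance of $d\mu_x(\xi)\,d\mathrm{Vol}(x)$ under $h^L_s$ is immediate, and you do \emph{not} need the entropy identity $h=\int \mathrm{tr}\,U\,dw^s$ from Theorem~\ref{formula}(1) at any point. If you attempted the cancellation you describe you would be unable to make it work, because a nontrivial $\exp(-\int \mathrm{tr}\,U)$ has no matching factor on the Patterson--Sullivan side.
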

\begin{proof}
The idea is to show the right hand side is a harmonic measure up to a normalization. Then the proposition follows from Theorem \ref{unique}. The proof is completely parallel to that of \cite[Proposition 4.1]{Yue} (see also \cite{Yue1,Led}), and hence is omitted.
\begin{comment}
By the discussion in Section 6.1, $w^s$ is locally equivalent to the measure $d\text{Vol} \times d\mu_{W^u}$.
Note that the Lebesgue measure $\text{Vol}$ on $W^{ws}(v)$ can be induced by the Lebesgue measure $\text{Vol}$ on $M$.
It remains to prove that the density function is given by $c(x)$.

Alternatively, we can prove the above proposition by showing that the measure on the right hand side is invariant under the horocycle flow, hence it coincides with $w^s$ by the unique ergodicity. Indeed, since the stable foliation is absolutely continuous inside each weak stable manifold, the action $h_s^L$ on any weak stable manifold $W^s(v)$ is absolutely continuous with respect the volume on  $W^s(v)$. We need argue that the inside integral is the density function.  Note that $P(A)=P(h_s^LA)$ where $P: SM\to \pX$ is the canonical projection $P(v)=v^+$. So the inside integral is unchanged under the action of $h_s^L$. On the other hand,It is easy to see that the density is given by $c(x)$.
\end{comment}
\end{proof}

\begin{proof}[Proof of Theorem \ref{rigidity}]
By Theorem \ref{formula},
$$h^2=\int_{SM} -tr \dot{U}(v)+(tr U(v))^2 dw^s(v).$$
By the Ricatti equation, in dimension two we have
$$-\dot U+U^2+K=0$$
where $K$ is the Gaussian curvature.
Since now $U$ is just a real number and hence $tr (U^2)=(tr U)^2$, using Proposition \ref{measure} and Gauss-Bonnet formula we have
\begin{equation*}
\begin{aligned}
h^2=&\int_{SM} -K dw^s=\frac{1}{C}\int_M -c(x)K(x)d \text{Vol}(x)\\
=&\int -Kd \text{Vol}/\text{Vol}(M)=-2\pi E /\text{Vol}(M),
\end{aligned}
\end{equation*}
where $E$ is the Euler characteristic of $M$. By Katok's result \cite[Theorem B]{Ka},
$h^2=-2\pi E /\text{Vol}(M)$ if and only if $M$ has constant negative curvature.
\end{proof}

\section{Flip invariance of the Patterson-Sullivan measure}
For each $x\in X$, denote by $\tilde \mu_x$ both the Borel probability measure on $S_xX$ and $\partial X$ given by the normalized Patterson-Sullivan measure.
For any continuous $\varphi: SM\to \RR$, define a measure $w^s$ by
$$C\int_{SM} \varphi dw^s:=\int_Mc(x)\int_{S_xM}\varphi d\tilde \mu_x(v)d \text{Vol}(x)$$
where $C=\int_M c(x)d\text{Vol}(x)$.

In view of the proof of Proposition \ref{measure}, $w^s$ is a harmonic measure associated to the strong stable foliation, though the uniqueness of harmonic measure is unknown in general. Without the uniqueness of harmonic measure, we can still obtain some rigidity results in this section.

\begin{proposition}\label{2formula}
For $\varphi\in C^1(SM)$, one has
$$\int_{SM}\dot\varphi+(h-tr U)\varphi dw^s=0.$$
\end{proposition}
\begin{proof}
Define a vector field on $M$ by
$$Y(y):=\int_{S_yM}\varphi X(v)d\mu_y(v)=\int_{S_xM}\varphi X(v)e^{-hb_v(y)}d\mu_x(v)$$
where $X$ is the geodesic spray. Since $div X=tr U$, one has
\begin{equation*}
\begin{aligned}
div|_{y=x}Y&=\int_{S_xM}div|_{y=x}\varphi X(v)e^{-hb_v(y)}d\mu_x(v)\\
&=\int_{S_xM}\dot\varphi+(h-tr U)\varphi d\mu_x.
\end{aligned}
\end{equation*}
Using Green's formula, we have $\int_{SM}\dot\varphi+(h-tr U)\varphi dw^s=0.$
\end{proof}

\begin{proposition}\label{coin}
If $w^s$ is $\phi^t$-invariant, then $M$ is locally symmetric.
\end{proposition}
\begin{proof}
If $w^s$ is $\phi^t$-invariant, by Proposition \ref{2formula}, we have
$$\int_{SM}(h-tr U)\varphi dw^s=0$$
for all  $\varphi\in C^1(SM,\RR)$. It follows that $tr U\equiv h$, i.e., $M$ is asymptotically harmonic. By \cite[Theorem 1.2]{Zi} (see also \cite[Proposition 2.2]{LS}), $M$ is locally symmetric.
\end{proof}

For manifolds of nonpositive curvature, not every pair of $\eta\neq \xi$ in $\partial X$ can be connected by a geodesic. A point $\xi\in \partial X$ is called \emph{hyperbolic} if for any $\eta\neq \xi$ in $\partial X$, there exists a rank one geodesic joining $\eta$ to $\xi$. The set of hyperbolic points is dense in $\partial X$ (see \cite[Lemma 3.4]{LS}).
\begin{lemma}\label{liou}
If for all $x\in M$, $\tilde\mu_x$ is flip invariant, then the Knieper measure $\lm$ coincides with the Liouville measure $\text{Leb}$ on $SM$.
\end{lemma}

\begin{proof}
First we lift every measure to the universal cover $X$ and show that for all $x\in X$, $\frac{d\tilde \mu_x}{d \text{Leb}}_x$ is finite every where on $S_xX$.
We still denote the measures $f_x \tilde\mu_x$ and $f_x \text{Leb}_x$ on $\partial X$ by $\tilde\mu_x$ and $\text{Leb}_x$ for simplicity.

Assume that there exists some $\xi\in \partial X$ such that
\begin{equation}\label{e:ratio}
\begin{aligned}
\limsup_{\e\to 0}\frac{\tilde \mu_x(D_x(\xi,\e))}{\text{Leb}_x(D_x(\xi,\e))}=0
\end{aligned}
\end{equation}
where $D_x(\xi,\e):=\{\eta\in \partial X: \angle_x(\xi,\eta)\le \e\}$. Take $\e>0$. For any $\rho>0$ small enough, choose a hyperbolic $\xi'\in \partial X$ close to $\xi$ such that
\begin{equation}\label{e:ratio1}
\begin{aligned}
D_x(\xi', (1-\rho)\e)\subset D_x(\xi,\e)\subset D_x(\xi', (1+\rho)\e).
\end{aligned}
\end{equation}
We can choose some constant $C_1>1$ independent of $\e$ and $\rho$ such that
\begin{equation}\label{e:ratio2}
\begin{aligned}
\text{Leb}_x(D_x(\xi,\e))\le \text{Leb}_x(D_x(\xi', (1+\rho)\e))\le C_1\text{Leb}_x(D_x(\xi', (1-\rho)\e)).
\end{aligned}
\end{equation}
It follows from \eqref{e:ratio},\eqref{e:ratio1} and \eqref{e:ratio2} that
\begin{equation}\label{e:ratio3}
\begin{aligned}
\frac{\tilde \mu_x(D_x(\xi',(1-\rho)\e))}{\text{Leb}_x(D_x(\xi',(1-\rho)\e))}\le \frac{C_1\tilde \mu_x(D_x(\xi,\e))}{\text{Leb}_x(D_x(\xi,\e))}.
\end{aligned}
\end{equation}
Then for any $\eta\in \partial X$, there exists a geodesic $c_{\xi'\eta}$ connecting $\xi'$ and $\eta$. Take a point $y\in c_{\xi'\eta}$. Due to the flip invariance,
\begin{equation*}\label{e:flip}
\begin{aligned}
\frac{\tilde \mu_y(D_y(\xi',(1-\rho)\e))}{\text{Leb}_y(D_y(\xi',(1-\rho)\e))}=\frac{\tilde \mu_y(D_y(\eta,(1-\rho)\e))}{\text{Leb}_y(D_y(\eta,(1-\rho)\e))}.
\end{aligned}
\end{equation*}

Since the measures $\tilde \mu_y$ and $\tilde \mu_x$ (resp. $\text{Leb}_y$ and $\text{Leb}_x$) are equivalent with positive Radon-Nikodym derivative, we have
by \eqref{e:ratio3},
\begin{equation*}\label{e:ratio4}
\begin{aligned}
\frac{\tilde \mu_y(D_x(\xi',(1-\rho)\e))}{\text{Leb}_y(D_x(\xi',(1-\rho)\e))}\le \frac{C_2\tilde \mu_x(D_x(\xi,\e))}{\text{Leb}_x(D_x(\xi,\e))}
%\limsup_{\e\to 0}\frac{\tilde \mu_y(D_x(\xi',\e))}{\text{Leb}_y(D_x(\xi',\e))}=0.
\end{aligned}
\end{equation*}
for some $C_2>1$.
Then by flip invariance,
\begin{equation*}\label{e:ratio6}
\begin{aligned}
\frac{\tilde \mu_y(U_\e(\eta))}{\text{Leb}_y(U_\e(\eta))}\le \frac{C_2\tilde \mu_x(D_x(\xi,\e))}{\text{Leb}_x(D_x(\xi,\e))}
\end{aligned}
\end{equation*}
where $U_\e(\eta)$ is the image of $D_x(\xi',(1-\rho)\e)$ under the flip map. Use again that $\tilde \mu_y$ and $\tilde \mu_x$ (resp. $\text{Leb}_y$ and $\text{Leb}_x$) are equivalent with positive Radon-Nikodym derivative, we get for some $C_3>1$
\begin{equation*}\label{e:ratio6}
\begin{aligned}
\frac{\tilde \mu_x(U_\e(\eta))}{\text{Leb}_x(U_\e(\eta))}\le \frac{C_3\tilde \mu_x(D_x(\xi,\e))}{\text{Leb}_x(D_x(\xi,\e))}.
\end{aligned}
\end{equation*}
As $U_\e(\eta)$ shrinks to $\{\eta\}$ as $\e\to 0$, we see the Radon-Nikodym derivatives $\frac{d\tilde \mu_x}{d\text{Leb}_x}$ is also zero at any $\eta\in \pX$.

Similarly, if
\begin{equation*}
\begin{aligned}
\limsup_{\e\to 0}\frac{\tilde \mu_x(D_x(\xi,\e))}{\text{Leb}_x(D_x(\xi,\e))}=\infty
\end{aligned}
\end{equation*}
for some $\xi\in \partial X$, the Radon-Nikodym derivatives $\frac{d\tilde \mu_x}{d\text{Leb}_x}$ is also infinity at any $\eta\in \pX$.

Since both $\tilde \mu_x$ and $\text{Leb}_x$ have finite total mass, their Radon-Nikodym derivatives must be finite somewhere and hence everywhere. Thus the Liouville measure $\text{Leb}$ is equivalent to the Knieper measure. As the Knieper measure is ergodic, the two measures coincide.
\end{proof}

\begin{lemma}\label{cons}
If for all $x\in M$ $\tilde\mu_x$ is flip invariant, then the Margulis function $c(x)$ is constant.
\end{lemma}
\begin{proof}
Any $\varphi\in C^2(M,\RR)$ can be lifted to a function on $SM$ which we still denote by $\varphi$. Since any weak unstable manifold is diffeomorphic to $X$, we have $\bigtriangleup^{cs} \varphi=\triangle \varphi$ where $\triangle$ is the Laplacian along $X$ and $\bigtriangleup^{cs}$ is the Laplacian along the weak stable foliation. By \cite[Lemma 5.1]{Yue3}, $\triangle^{cs} \varphi=\bigtriangleup^s\varphi+\ddot{\varphi}-tr U \dot\varphi$.
Then by Definition of $w^s$ and Proposition \ref{2formula},
\begin{equation*}
\begin{aligned}
&\int_M \bigtriangleup \varphi c(x)d\text{Leb}=C\int \bigtriangleup \varphi dw^s\\
=&C\int_{SM}(\bigtriangleup^s\varphi+\ddot{\varphi}-tr U \dot\varphi)dw^s\\
=&C\left(\int_{SM}\bigtriangleup^s\varphi dw^s+\int_{SM}\ddot{\varphi}+(h-tr U) \dot\varphi dw^s-\int_{SM}h\dot\varphi dw^s\right)\\
=&-h\int_{M}c(x)d\text{Leb}(x)\int \dot\varphi(x,\xi) d\tilde\mu_x(\xi).
\end{aligned}
\end{equation*}
Since $d\tilde\mu_x(\xi)=d\tilde\mu_x(-\xi)$ and $\dot\varphi(x,\xi)=-\dot\varphi(x,-\xi)$, we have $$\int_M \bigtriangleup \varphi c(x)d \text{Leb}=0$$
for any $\varphi\in C^2(M,\RR)$. So $c(x)$ must be constant.
\end{proof}

\begin{proof}[Proof of Theorem \ref{flip}]
By the construction, the Knieper measure $\lm$ is flip invariant. By the flip invariance of the partition $\{S_xM\}_{x\in M}$ and the uniqueness of conditional measures, we see that $\bar\mu_x$ is flip invariant for $\lm\ae x\in M$. It follows that the normalized Patterson-Sullivan measures $\tilde\mu_x$ is flip invariant for $\lm\ae x\in M$.

We claim that for all $x\in M$ $\tilde\mu_x$ is flip invariant. Indeed, as
$$\frac{d\tilde \mu_{y}}{d \tilde\mu_{x}}(\xi)=\frac{c(x)}{c(y)}e^{-h \cdot b_{\xi}(y,x)},$$
$x\mapsto \tilde\mu_{x}$ is continuous. Let $x_k,x\in X$ and $x_k\to x$ as $k\to \infty$. Then for each Borel subset $A\subset \partial X$ and its geodesic reflection $-A(x_k)$ (resp. $-A(x)$) with respect to $x_k$ (resp. $x$),
$$\tilde\mu_x(A)=\lim_{k\to \infty} \tilde\mu_{x_k}(A)=\lim_{k\to \infty} \tilde\mu_{x_k}(-A(x_k))=\tilde\mu_x(-A(x)).$$
The claim follows.

By Lemma \ref{liou}, the Knieper measure $\lm$ coincides with the Liouville measure, and thus $\lm$ projects to the Riemannian volume on $M$. By assumption, the conditional measures $\bar\mu_x$ coincides with $\tilde\mu$. Moreover, by Lemma \ref{cons}, $c(x)$ is constant. Consequently, we see from definition that $w^s$ coincides with the Knieper measure $\lm$, and hence it is $\phi^t$-invariant. By Proposition \ref{coin}, $M$ is locally symmetric.
\end{proof}

\ \
\\[-2mm]
\textbf{Acknowledgement.}
The author would like to thank the referees for valuable suggestions. This work is supported by NSFC Nos. 12071474 and 11701559, and Fundamental Research Funds for the Central Universities No. 20720210038.

\section{Appendix: Manifolds without focal/conjugate points}
In this appendix, we discuss the proof of Theorem A', which is an extension of Theorem \ref{margulis} to all manifolds without focal points, and a class of manifolds without conjugate points.
\begin{definition}
Let $c$ be a geodesic on $(M,g)$.
\begin{enumerate}
  \item A pair of distinct points $p=c(t_{1})$ and $q=c(t_{2})$ are called \emph{focal} if there is a Jacobi field $J$ along $c$ such that $J(t_{1})=0$, $J'(t_{1})\neq 0$ and $\frac{d}{dt}\mid_{t=t_{2}}\| J(t)\|^{2}=0$;
  \item $p=c(t_{1})$ and $q=c(t_{2})$ are called \emph{conjugate} if there is a nontrivial Jacobi field $J$ along $c$ such that $J(t_{1})=0=J(t_{2})$.
\end{enumerate}
A compact Riemannian manifold $(M,g)$ is called a manifold \emph{without focal points/without conjugate points} if there is no focal points/conjugate points on any geodesic in $(M,g)$.
\end{definition}
By definition, if a manifold has no focal points then it has no conjugate points. All manifolds of nonpositive curvature always have no focal points.

The uniqueness of MME for manifolds without focal points is obtained in \cite{LWW,CKP1,CKP2}. In \cite{CKW1}, the authors proved the uniqueness of MME for the \emph{class $\mathcal{H}$} of manifolds without conjugate points that satisfy:
\begin{enumerate}
  \item There exists a Riemannian metric $g_0$ on $M$ for which all sectional curvatures are negative;
  \item The uniform visibility axiom (see Definition \ref{vis} below) is satisfied;
  \item The fundamental group $\pi_1(M)$ is residually finite: the intersection of its finite index subgroups is trivial;
  \item There exists $h_0 < h$ such that any ergodic invariant Borel probability measure $\mu$ on $SM$ with entropy strictly greater than $h_0$ is almost expansive (cf. \cite[Definition 2.8]{CKW2}) .
\end{enumerate}
All surfaces of genus at least $2$ without conjugate points belong to the class $\mathcal{H}$.

\begin{definition}(Cf. \cite[Definition 2.1]{CKW2})\label{vis}
A simply connected Riemannian manifold $X$ is a \emph{uniform visibility manifold} if for every $\e>0$ there exists $L=L(\e) > 0$ such that whenever a geodesic $c: [a, b]\to X$ stays at distance at least $L$ from some point $p \in X$,
then the angle sustained by $c$ at $p$ is less than $\e$, that is,
$$\angle_p(c) := \sup_{a\le s,t\le b}\angle_p(c(s), c(t)) < \e.$$
If $M$ is a Riemannian manifold without conjugate points whose universal cover $X$ is a uniform visibility manifold, then we say that $M$ is a uniform visibility manifold.
\end{definition}

\begin{definition}(Cf. \cite[Definition 2.2]{CKW2})
The manifold $(M, g)$ has the \emph{divergence property} if given any geodesics $c_1\neq c_2$ with $c_1(0)=c_2(0)$ in the universal cover, we have $\lim_{t\to \infty} d(c_1(t), c_2(t))=\infty$. Manifolds without focal points has the divergence property.
\end{definition}
The uniform visibility property implies the divergence property. All manifolds without focal points have the divergence property.

The proof of Theorem A' is analogous to that of Theorem \ref{margulis} with minor modifications. We skip the details and just sketch main steps where modifications are needed.

The local product flow boxes are constructed in \cite[Section 2.4]{Wu} near regular vectors in the no focal points case, and in \cite[Section 3.2]{CKW2} near expansive vectors in the case of no conjugate points. We need modify the time interval from $[0,\a]$ to $[-\a,-\a]$, so that Lemma \ref{nhd} still holds.

We must establish corresponding versions of $\pi$-convergence theorem \ref{piconvergence}. For manifolds with no focal points, a weak $\pi$-convergence theorem is established in \cite[Theorem 3.9]{Wu}, and now we need to replace $\bF_\theta$ by $V^+$, and $\bF_\rho$ by $(B_{-\rho}N)^+$ in that theorem. The proof can be modified accordingly. In the case of no conjugate points, \cite[Lemma 4.9]{CKW2} should be rephrased and reproved accordingly.

In both the proofs of Lemmas \ref{intersect2} and \ref{nonuniform}, the comparison theorem is used to show the following: Let $p\in X$, then given any $a>0,\e>0$, there exists $T>0$ such that for any $t\geq T$ and $v,w\in S_pX$, $d(\phi^tv,\phi^tw)\leq \e$ implies that $\angle(v,w)<a$.  Now we can not use the comparison theorem anymore.
\begin{enumerate}
  \item For manifolds in class $\mathcal{H}$, it is a direct consequence of uniform visibility property. Indeed, if $T$ is large enough, then by the triangle inequality, the geodesic connecting $\phi^tv$ and $\phi^tw$ stays at distance at least $L(a)$ from $p$. Thus $\angle(v,w)<a$.
  \item For rank one manifolds without focal points, assume the contrary.  Then there exist $t_n\to \infty$ and $v_n,w_n\in S_pX$ such that
  $$d(\phi^{t_n}v_n,\phi^{t_n}w_n)\leq \e \text{\ and\ }\angle(v_n,w_n)\ge a.$$
  By taking a subsequence, we can assume without loss of generality that $v_n\to v, w_n\to w$ for some $v,w\in S_pX$. Then $\angle(v,w)\ge a$. Take any $t>0$. Choose $n$ large enough such that $t_n>t$ and $d(\phi^tv,\phi^tw)\le  d(\phi^tv_n,\phi^tw_n)+\e$. By monotonicity, $d(\phi^tv_n,\phi^tw_n)\leq d(\phi^{t_n}v_n,\phi^{t_n}w_n) \le \e$. It follows that $d(\phi^tv,\phi^tw)\leq 2\e$ for any $t>0$, which contradicts to the divergence property.
\end{enumerate}
So we also have these lemmas in both no focal/conjugate points cases.

Finally, let us comment on Lemma \ref{singular}, which can be extended to the no focal points case without any modification. In the case of no conjugate points, instead of singular vectors we need consider vectors which do not lie in a countable union of flow boxes near expansive vectors. More precisely, there exist countably many expansive vectors $w_1, w_2, \cdots$ such that $S_xX\cap \mathcal{E} \subset \cup_{i=1}^\infty\text{int} B_{\theta_i}^\a(w_i)$, where $\mathcal E$ is the expansive set. See \cite[(2.11)]{CKW1} for definition of expansive vectors and expansive set. The vectors outside of these flow boxes form a subset $S$ which is closed and $\phi^t$-invariant. Moreover, $S\cap \mathcal E=\emptyset$. Since the unique MME $\lm$ gives full weight to $\mathcal E$ (cf. \cite[Theorem 5.6]{CKW1}), we know $\lm(S)=0$. It follows that $h_\top(S)<h$ and thus Lemma \ref{singular} can be proved similarly.

\end{document}